\DeclareSymbolFont{cyrletters}{OT2}{wncyr}{m}{n}
\DeclareMathSymbol{\Sha}{\mathalpha}{cyrletters}{"58}
\newtheorem{theorem}{Theorem}[section]
\newtheorem{lemma}[theorem]{Lemma}
\newtheorem{proposition}[theorem]{Proposition}
\newtheorem{corollary}[theorem]{Corollary}
\newtheorem{conjecture}[theorem]{Conjecture}
\theoremstyle{definition}
\newtheorem*{ack}{Acknowledgements}
\newtheorem{remark}[theorem]{Remark}
\newtheorem{example}[theorem]{Example}
\newtheorem{definition}[theorem]{Definition}
\newtheorem{question}[theorem]{Question}
\numberwithin{equation}{section} \numberwithin{figure}{section}
\DeclareMathOperator{\Gal}{Gal}
\DeclareMathOperator{\Spec}{Spec}
 \DeclareMathOperator{\re}{Re}
\DeclareMathOperator{\Br}{Br} 
 \DeclareMathOperator{\res}{\partial}
 \DeclareMathOperator{\Norm}{N}
\DeclareMathOperator{\Frob}{Frob} \DeclareMathOperator{\dens}{dens}
\DeclareMathOperator{\Irr}{Irr} 
\DeclareMathOperator{\id}{id} \DeclareMathOperator{\Li}{Li}
\newcommand{\Adele}{\mathbf{A}}
\newcommand{\fp}{\mathfrak{p}}
\newcommand{\fa}{\mathfrak{a}}
\newcommand{\br}{\mathscr{B}}
\newcommand\FF{\mathbb{F}}
\newcommand\PP{\mathbb{P}}
\newcommand\ZZ{\mathbb{Z}}
\newcommand\NN{\mathbb{N}}
\newcommand\QQ{\mathbb{Q}}
\newcommand\RR{\mathbb{R}}
\newcommand\CC{\mathbb{C}}
\newcommand\GG{\mathbb{G}}
\newcommand\Ga{\GG_\mathrm{a}}
\newcommand\OO{\mathcal{O}}
\newcommand{\x}{\mathbf{x}}
\newcommand{\one}{\mathbbm{1}}
\title[Fibrations with few rational points]{Fibrations with few rational points}
\author{D. Loughran}
\address{D. Loughran \\
University of Manchester,
School of Mathematics,
Oxford Road,
Manchester,
M13 9PL,
UK.}
\email{loughran@math.uni-hannover.de}
\author{A. Smeets}
\address{A. Smeets \\ Radboud University Nijmegen, IMAPP, Heyendaalseweg 135, 6525 AJ Nijmegen, The Netherlands   \emph{and} KU Leuven, Departement Wiskunde, Celestijnenlaan 200B, 3001 Leuven, Belgium.} 
\email{arnesmeets@gmail.com}
\subjclass[2010]
{14G05; %Rational points
14D10, %Fibrations: arithmetic ground fields
11N36, %Applications of sieve methods
11G35. %Varieties over global fields
}
\begin{document}

\begin{abstract}
	We study the problem of counting the number of varieties
	in families which have a rational point.
	We give conditions on the singular fibres that force
	very few of the varieties in the family to contain a rational point,
	in a precise quantitative sense. This generalises and unifies existing results in the literature
	by Serre, Browning--Dietmann, Bright--Browning--Loughran, Graber--Harris--Mazur--Starr, \emph{et al}.
\end{abstract}

\maketitle

\thispagestyle{empty}

\tableofcontents

\section{Introduction}
Given a family of varieties over a number field $k$, a natural question is the following: how ``many'' varieties in the family contain a rational point? To make this question more precise, we shall use height functions. 

Let $X$ be a variety over $k$ equipped with a dominant morphism $\pi:X \to \PP_k^n$. Consider $\PP_k^n$ with its usual height: given $x = (x_0:\cdots:x_n) \in \PP^n(k)$, we define  $$H(x) = \prod_{v } \max\{|x_0|_v, \ldots, |x_n|_v\}$$ where the product is over all places $v$ of $k$ and $|\cdot|_v$ denotes the normalised $v$-adic absolute value.
One is then interested in studying the quantity
\begin{equation} \label{def:rational}
N(\pi,B)=\#\{x \in \PP^n(k): x \in \pi(X(k)), H(x) \leq B\},
\end{equation}
as $B \to \infty$, which counts those varieties in the family with a rational point. In this paper, we focus  on the function
\begin{equation} \label{def:local}
N_{\mathrm{loc}}(\pi,B) = \#\{x \in \PP^n(k): x \in \pi(X(\Adele_k)), H(x) \leq B\},
\end{equation}
counting those varieties in the family which are everywhere locally solvable. The goal of this paper is to obtain upper bounds for the latter quantity, the primary motivation being to give upper bounds for the counting function \eqref{def:rational}. This will allow us to deduce that $0\%$ of the varieties in certain families admit a rational point, by proving the stronger statement that $0\%$ of the varieties are everywhere locally solvable. The counting functions (\ref{def:rational}) and (\ref{def:local}) have been studied for various special families by numerous authors, see e.g.~\cite{BD09}, \cite{Guo95}, \cite{Hoo93}, \cite{Hoo07}, \cite{PV04}, \cite{Ser90} and the more recent papers \cite{Bha14}, \cite{BGW15}, \cite{BBL15}, \cite{BD14} and \cite{Lou13}.

Let us first state a special case of our results, which illustrates the kind of behaviour observed in this paper:

\begin{theorem} \label{thm:single_fibre}
	Let $X$ be a  proper, smooth irreducible algebraic variety over a number field $k$, equipped with a dominant morphism $\pi:X \to \PP_k^n$ with geometrically integral generic fibre. If the fibre over some codimension one point of $\PP_k^n$ is irreducible, but not geometrically integral, then
	$$N_{\mathrm{loc}}(\pi,B) = o(B^{n+1}).$$
\end{theorem}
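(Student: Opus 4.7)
The plan is to show $N_{\mathrm{loc}}(\pi,B) = O(B^{n+1}/(\log B)^{\delta})$ for some $\delta>0$ via a large sieve on the reductions of rational points modulo primes of $k$. Since Schanuel's theorem gives $\#\{x\in\PP^n(k):H(x)\leq B\}\sim c_k B^{n+1}$, such a bound implies the $o(B^{n+1})$ claim.

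First I would extract the group-theoretic data from the bad divisor $D\subset\PP^n_k$. Let $L$ be the algebraic closure of $\kappa(D)$ inside the function field of $X_D$; by hypothesis $[L:\kappa(D)]>1$. Let $M$ be a Galois closure of $L/\kappa(D)$, and set $G:=\Gal(M/\kappa(D))$, $H:=\Gal(M/L)\subsetneq G$. Spreading out, $M$ corresponds to a connected finite \'etale $G$-Galois cover $\tilde D^0\to D^0$ over some dense open $D^0\subseteq D$. Next, fix a proper flat integral model $\mathcal{X}\to\PP^n_{\OO_k}$ of $\pi$, defined away from a finite set of bad places $S$, and --- shrinking $D^0$ further and possibly altering or resolving $\mathcal{X}$ along $\pi^{-1}(D)$ --- arrange that for all $v\notin S$ and all $\bar x_v\in D^0(\mathbb{F}_v)$, the special fibre of $\mathcal{X}_{\tilde x_v}$ is smooth, irreducible but not geometrically integral over $\mathbb{F}_v$, with its geometric components permuted by $\Frob_{\bar x_v}$ via the identification with $G/H$.

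The local input is then clean: under smoothness the geometric components are disjoint, so a smooth $\mathbb{F}_v$-point exists only if some component is $\Frob_{\bar x_v}$-stable, i.e.\ only if $\Frob_{\bar x_v}\in\bigcup_{g\in G}gHg^{-1}$. When this fails, the special fibre has no $\mathbb{F}_v$-point at all, hence by the valuative criterion $\mathcal{X}_{\tilde x_v}(\OO_v)=\emptyset$ and so $X_x(k_v)=\emptyset$. Since $H\subsetneq G$, Jordan's lemma ensures that $U:=G\setminus\bigcup_g gHg^{-1}$ is a nonempty union of conjugacy classes of positive relative size $c_0>0$, and function-field Chebotarev (via Lang--Weil) applied to $\tilde D^0\to D^0$ shows that for almost all $v$ the set $\{\bar x_v\in D^0(\mathbb{F}_v):\Frob_{\bar x_v}\in U\}$ has cardinality $(c_0+o(1))q_v^{\dim D}$, a proportion $\gg 1/q_v$ of $\PP^n(\mathbb{F}_v)$. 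A geometric sieve of Ekedahl/Serre type then delivers
\[ N_{\mathrm{loc}}(\pi,B)\ll B^{n+1}\prod_{q_v\leq B^{1/2}}\Bigl(1-\frac{c_0}{q_v}\Bigr)\ll\frac{B^{n+1}}{(\log B)^{c_0}}. \]

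The principal obstacle is the choice of integral model: one must arrange that the reductions of the fibres above $D^0(\mathbb{F}_v)$ are sufficiently well-behaved --- in particular smooth --- so that the absence of $\mathbb{F}_v$-points on the special fibre really does force the absence of $k_v$-points on $X_x$. If a single intersection point of two Frobenius-swapped components survives as an $\mathbb{F}_v$-point, one must either absorb it into an error term or refine the model further. This is precisely the sort of issue handled by the pseudo-split fibre formalism in the work of Bright--Browning--Loughran, and a suitable adaptation --- possibly involving a refined resolution of $\mathcal{X}$ in a neighbourhood of $\pi^{-1}(D)$ --- should make the argument go through in the generality of the theorem.
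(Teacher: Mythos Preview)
Your overall strategy---extract the Galois action on the geometric components of the bad fibre, invoke Jordan's lemma to get a positive-density set of ``bad'' Frobenius classes, then sieve---is exactly the skeleton of the paper's proof of Theorem~\ref{thm:non-split}, from which Theorem~\ref{thm:single_fibre} follows immediately via Lemma~\ref{lem:Galois}. So the architecture is right.

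The genuine gap is precisely the step you flag as ``the principal obstacle'', and your proposed fix does not work. You want to arrange that the fibres of the integral model over $D^0(\FF_v)$ are \emph{smooth}, so that non-splitness forces the absence of any $\FF_v$-point. In general this cannot be done: already for a conic bundle the bad fibre over $D$ is a pair of Galois-conjugate lines meeting in a node, hence singular. If you blow up the node to make the fibre smooth, you introduce an exceptional $\PP^1$ defined over $\kappa(D)$---the fibre becomes \emph{split} and the argument collapses. More generally, any resolution of $\mathcal{X}$ along $\pi^{-1}(D)$ risks creating rational components, and semistable-reduction-type arguments require ramified base change in the base, which destroys the link to the original counting problem. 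The reference to \cite{BBL15} is also off the mark: that paper does not address this issue.

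The paper's resolution is different in kind. One does \emph{not} make the special fibre smooth; instead one shows (Theorem~\ref{thm:sparsity}, via the submersivity machinery of \S\ref{sec:3.1}) that if the section $\mathcal{P}\in\mathcal{Y}(\OO_\fp)$ meets the bad divisor $\mathcal{T}$ \emph{transversally} outside a codimension-$2$ locus, then the one-dimensional pullback $\mathcal{X}\times_\mathcal{Y}\mathcal{P}$ is \emph{regular}. Regularity forces any $\OO_\fp$-point to reduce into the smooth locus of the special fibre, and a non-split scheme has no smooth rational point (Lemma~\ref{lem:smooth_point}). The transversality condition $\fp\,\|\,f(\x)$ is a condition modulo $\fp^2$, so the large sieve must be run with $m=2$ (Proposition~\ref{prop:large_sieve}), not modulo $\fp$ as you suggest. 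This is the missing idea, and it is the main technical contribution of \S\ref{Sec:sparsity}.
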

Thus  a simple geometric condition implies that ``almost all'' varieties in the family are not everywhere locally solvable; indeed, recall that by \cite{Sch79}, there is some $c_k > 0$ such that $\#\{x \in \PP^n(k): H(x) \leq B\} \sim c_kB^{n + 1}$ as $B\to \infty$.

To state our main results, we need more notation. Following Skorobogatov \cite[Def.~0.1]{Sko96}, a scheme over a field $k$ is said to be \emph{split} if it contains a geometrically integral open subscheme, and \emph{non-split} otherwise. 
Let $\pi: X \to \PP_k^n$ be as above and choose some model $\pi:\mathcal{X} \to \PP_k^n$ for $\pi$ over $\OO_{k,S}$
for some finite set of primes $S$ of $k$ (denoted again by $\pi$). We define
\begin{equation} \label{def:Delta}
\Delta(\pi)
 := \lim_{B \to \infty} \frac{\sum_{\substack{\fp \subset \OO_{k,S} \\ N(\fp) \leq B}} \#\{x_\fp \in \PP^n(\FF_\fp) :\pi^{-1}(x_\fp) \mbox{ is non-split}\}}
{\sum_{\substack{\fp \subset \OO_{k,S} \\ N(\fp) \leq B}}N(\fp)^{n-1} },
\end{equation}
where the sums are taken over the non-zero prime ideals $\fp$ of $\OO_{k,S}$.
%This quantity should be thought of as a ``density'' for the number of non-split fibres in the family. 
We will show that this limit exists, and give a formula to calculate it in terms of the splitting behaviour
of the fibres of $\pi$ over codimension one points. This will reduce the calculation of $\Delta(\pi)$ to a problem in group theory.

To describe this formula, let $D$ be a codimension one point of $\PP_k^n$ with residue field $\kappa(D)$. The fibre of $\pi$ over $D$ is a possibly reducible $\kappa(D)$-scheme. 
Let $I_D(\pi)$ be its set of geometric irreducible components of multiplicity one, i.e.~the irreducible components of $\pi^{-1}(D) \otimes_{\kappa(D)} \overline{\kappa(D)}$ which are generically reduced.
Choose a finite group $\Gamma_D(\pi)$ through which the action of $\Gal(\overline{\kappa(D)}/\kappa(D))$ on $I_D(\pi)$ factors; this corresponds to a choice of splitting field for these irreducible components, cf. \S\ref{sec:delta} for more details. If $I_D(\pi)$ is non-empty, we set
\begin{equation} \label{def:delta_D}
\delta_D(\pi) := \frac{\#\{\gamma \in \Gamma_D(\pi) : \gamma \mbox{ acts with a fixed point on } I_{D}(\pi)\}}{\# \Gamma_D(\pi)}.
\end{equation}
This definition is independent of the choice of $\Gamma_D(\pi)$. If $I_D(\pi)$ is empty, we set $\delta_D(\pi) = 0$. If $\pi^{-1}(D)$ is split, then it is simple to see that $\delta_D(\pi) = 1$, since then one element of $I_D(\pi)$ is fixed by all elements of $\Gamma_D(\pi)$. However the converse does not hold in general; see Example \ref{Ex:CT} for a counterexample.

We now come to the main result of this paper:

\begin{theorem} \label{thm:non-split}
	Let $X$ be a proper, smooth algebraic variety over a number field $k$, equipped with a dominant morphism $\pi:X \to \PP_k^n$ with geometrically integral generic fibre. 
	Then the limit \eqref{def:Delta} exists and we have
	$$\Delta(\pi) = \sum_{D \in (\PP_k^n)^{(1)}}\left(1 - \delta_D(\pi)\right).$$
	Moreover, we have the upper bound
	$$N_{\mathrm{loc}}(\pi,B) \ll  \frac{B^{n+1}}{(\log B)^{\Delta(\pi)}}.$$
\end{theorem}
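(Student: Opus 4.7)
The plan breaks into two parts: first establishing the formula for $\Delta(\pi)$, then leveraging it via a sieve to derive the upper bound on $N_{\mathrm{loc}}(\pi,B)$.

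For the formula, note that the sum is effectively finite, since the hypothesis that the generic fibre is geometrically integral forces $\delta_D(\pi)=1$ for all but finitely many $D$. For each of the remaining divisors, one chooses a splitting field realising $\Gamma_D(\pi)$ as a finite quotient of $\Gal(\overline{\kappa(D)}/\kappa(D))$, and fixes a suitable integral model of $\pi$ over $\OO_{k,S}$. The key observation is that for $\fp \notin S$ and for $x_\fp \in \PP^n(\FF_\fp)$ meeting the horizontal closure $\overline{D}$, the geometric components of multiplicity one of $\pi^{-1}(x_\fp)$ are in natural Frobenius-equivariant bijection with $I_D(\pi)$, with the Frobenius at $\fp$ acting through its image in $\Gamma_D(\pi)$; hence by Lang--Weil, $\pi^{-1}(x_\fp)$ is split if and only if this Frobenius has a fixed point on $I_D(\pi)$. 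Combining this with the Chebotarev density theorem applied to $\Gamma_D(\pi)$ and the Lang--Weil estimate $\#\overline{D}(\FF_\fp)=N(\fp)^{n-1}+O(N(\fp)^{n-3/2})$, then summing via the prime ideal theorem, simultaneously proves existence of the limit and the claimed formula.

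For the upper bound, the crucial geometric input is: if $x \in \PP^n(k)$ is locally solvable at an unramified prime $\fp$ whose reduction $\bar{x}_\fp$ meets $\overline{D}$, then the Frobenius at $\fp$ in $\Gamma_D(\pi)$ must act on $I_D(\pi)$ with a fixed point. Otherwise every multiplicity-one geometric component of $\pi^{-1}(\bar{x}_\fp)$ is permuted non-trivially by Frobenius, Lang--Weil forbids smooth $\FF_\fp$-points on $\pi^{-1}(\bar{x}_\fp)$, and Hensel's lemma then precludes any $k_\fp$-point on $\pi^{-1}(x)$. Consequently, every locally solvable $x$ must avoid the ``forbidden residues'' modulo $\fp$, namely those meeting some $\overline{D}$ for which the Frobenius at $\fp$ acts on $I_D(\pi)$ without a fixed point.

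I conclude by applying an upper bound sieve to count $x \in \PP^n(k)$ of height at most $B$ satisfying these local exclusions for all but finitely many $\fp$. In aggregate, the density of forbidden residues at $\fp$ is $(\Delta(\pi)+o(1))/N(\fp)$, so a Selberg-style sieve should deliver the expected saving $\prod_{N(\fp)\leq B^\theta}(1-\Delta(\pi)/N(\fp)) \asymp (\log B)^{-\Delta(\pi)}$. The main obstacle is establishing a large-sieve inequality for points of bounded height in $\PP^n(k)$ with a positive level of distribution $\theta$, which I expect will require a careful adaptation of the sieve machinery developed in \cite{BBL15} to the present more general setting; the remainder of the argument should then reduce to bookkeeping over admissible moduli and the removal of the finitely many bad primes.
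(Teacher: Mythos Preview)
Your outline for the formula $\Delta(\pi)=\sum_D(1-\delta_D(\pi))$ is essentially the paper's Proposition~\ref{prop:Delta}, and the sketch is sound.

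The upper bound, however, has a genuine gap. Your key geometric claim is that if the reduction $\bar{x}_\fp$ lands on $\overline{D}$ and $\pi^{-1}(\bar{x}_\fp)$ is non-split, then $\pi^{-1}(x)$ has no $k_\fp$-point. This is false as stated. The argument you give (properness plus Hensel) requires that any $\OO_\fp$-point of the pullback $\mathcal{X}\times_{\mathcal{Y}}\Spec\OO_\fp$ reduce to a \emph{smooth} point of the special fibre; for that one needs this pullback to be regular, which is not automatic even though $\mathcal{X}$ is. The conic bundle $ax^2+ty^2=z^2$ with $a\notin k_\fp^{*2}$ already shows the problem: the total space is smooth, the fibre over $t=0$ is non-split, yet for $t=\varpi^2$ (so $\bar{x}_\fp=0$) the point $(0:1:\varpi)$ is a $k_\fp$-point. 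Your ``forbidden residue'' set modulo $\fp$ is therefore not actually forbidden.

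What the paper does instead is prove a sparsity criterion (Theorem~\ref{thm:sparsity}): the desired conclusion holds provided the section $\mathcal{P}:\Spec\OO_\fp\to\mathcal{Y}$ meets the non-smooth locus $\mathcal{T}$ \emph{transversally} outside a codimension-$2$ set. Establishing regularity of $\mathcal{X}\times_{\mathcal{Y}}\mathcal{P}$ under this hypothesis is the content of \S\ref{sec:3.1}--\S\ref{sec:3.2} and uses a submersivity argument. Transversality is the condition $\fp\parallel f(\x)$, which is a congruence modulo $\fp^2$, not $\fp$; accordingly the paper runs the large sieve with modulus $\fp^2$ (Propositions~\ref{prop:large_sieve}--\ref{prop:upper_bound}), after first passing to the affine cone to reduce to Theorem~\ref{thm:non-split_integral}. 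Once the correct sieving set is in place, the density of excluded residues modulo $\fp^2$ is still $\sim\Delta(\pi)/\Norm(\fp)$ and the endgame proceeds via a Tauberian argument (Lemma~\ref{lem:L(B)}), much as you anticipate.
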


This result illustrates the philosophy developed in  \cite[\S1.3]{Lou13}, namely that the behaviour of $N_{\mathrm{loc}}(\pi,B)$ should be determined by the non-split fibres over codimension one points. Note that no assumptions are made on the smooth fibres. Theorem \ref{thm:non-split} allows us to recover and extend numerous existing results. For example, we generalise results of Serre \cite{Ser90} on conic bundles, and improve upon work of Browning--Dietmann \cite{BD09} on Fermat curves and work of Graber--Harris--Mazur--Starr \cite{GHMS04} on genus $1$ fibrations, cf.~\S\ref{Sec:examples}.

If $\Delta(\pi)>0$, then Theorem \ref{thm:non-split} implies that $0\%$
of the varieties in the family are everywhere locally solvable. Our next result shows
that the converse is also true. Hence we obtain a complete description of when 
a positive proportion of the varieties in a family are everywhere locally solvable.

\begin{theorem} \label{thm:iff}
	Let $X$ be a proper, smooth algebraic variety over a number field $k$, equipped
	with a dominant morphism $\pi:X \to \PP_k^n$ with geometrically integral generic fibre. 
	Assume that 
	$X(\Adele_k) \neq \emptyset$ and $\Delta(\pi)=0$.
	Then 
	$$\lim_{B \to \infty} \frac{N_{\mathrm{loc}}(\pi,B)}{\#\{x \in \PP^n(k): H(x) \leq B\}}$$
	exists, is non-zero and is a product of local densities.
\end{theorem}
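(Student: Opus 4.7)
The goal is to provide a matching lower bound to Theorem~\ref{thm:non-split} (which, when $\Delta(\pi)=0$, specialises to $N_{\mathrm{loc}}(\pi,B)\ll B^{n+1}$) and to identify the leading constant as a product of local densities. Fix a smooth proper $\OO_{k,S}$-model of $\pi$, also denoted $\pi$, and for each place $v$ of $k$ set
$$\sigma_v := \mu_v\bigl(\{x \in \PP^n(k_v) : \pi^{-1}(x)(k_v)\neq\emptyset\}\bigr),$$
where $\mu_v$ is the standard $v$-adic measure on $\PP^n(k_v)$, normalised so that $\mu_\fp(\PP^n(\OO_\fp))=1$ at finite places. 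The plan is to prove
$$N_{\mathrm{loc}}(\pi,B) \sim c_k B^{n+1}\prod_v \sigma_v$$
and verify that the product is convergent and nonzero.

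I would first show that $\prod_\fp \sigma_\fp$ converges absolutely. Since $1 - \delta_D(\pi)\geq 0$ and these terms sum to $\Delta(\pi)=0$ by Theorem~\ref{thm:non-split}, one has $\delta_D(\pi)=1$ for every codimension-one $D$; equivalently, every element of $\Gamma_D(\pi)$ fixes some element of $I_D(\pi)$. By Chebotarev, this forces the reduction of every codimension-one fibre to be split for all but finitely many $\fp$. Combined with the Lang--Weil estimates and Hensel's lemma on the smooth locus of the model, the $\fp$-adic points of $\PP^n$ with no local point on the fibre then lie in the reduction of a codimension-$\geq 2$ locus, giving $1-\sigma_\fp = O(N(\fp)^{-2})$ for almost every $\fp$, which is summable. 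Positivity of each $\sigma_v$ follows from $X(\Adele_k)\neq\emptyset$: a $v$-adic point of $X$ produces, by smoothness and the implicit function theorem, a $v$-adic open subset of $\pi(X(k_v))$ of positive $\mu_v$-measure.

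For the asymptotic, I would run an Ekedahl-type sieve on
$$N_{\mathrm{loc}}(\pi,B) = \sum_{\substack{x\in\PP^n(k) \\ H(x)\leq B}} \prod_v \one_{\pi^{-1}(x)(k_v)\neq\emptyset}.$$
Fix a slowly growing cutoff $Q=Q(B)\to\infty$. A quantitative Schanuel-type count modulo a finite set of primes yields that the count imposed by the local conditions at the archimedean places and at primes of norm $\leq Q$ is asymptotic to $c_k B^{n+1}\prod_{v\mid\infty}\sigma_v\prod_{N(\fp)\leq Q}\sigma_\fp$. The contribution missed by tail primes of norm $>Q$ is at most a constant times $B^{n+1}\sum_{N(\fp)>Q}(1-\sigma_\fp)$, which is $o(B^{n+1})$ by absolute convergence.

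The main obstacle is implementing the sieve uniformly in $B$: Schanuel handles one fixed modulus with an explicit error, but here one needs joint equidistribution modulo every prime up to $Q(B)$ with combined error beating $B^{n+1}/\log B$. This is the classical difficulty in upgrading an Ekedahl-style density statement to a sharp asymptotic, and I expect it to proceed along the quantitative sieve framework of \cite{BBL15}. The hypothesis $\Delta(\pi)=0$ provides just enough decay $1-\sigma_\fp=O(N(\fp)^{-2})$ to allow $Q(B)$ to grow only modestly.
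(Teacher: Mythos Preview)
Your approach is essentially the one taken in the paper: reduce via $\Delta(\pi)=0$ to $\delta_D(\pi)=1$ for every codimension-one $D$, use this to confine the non-split fibres to a codimension~$\geq 2$ locus, and then feed this into the Ekedahl sieve of \cite[Prop.~3.4]{BBL15}. The paper packages the key geometric input as a separate proposition (Proposition~\ref{prop:split_codim_2}), establishing the existence of a closed $\mathcal{Z}\subset\PP^n_{\OO_{k,S}}$ of codimension~$\geq 2$ such that $(\mathcal{X}\setminus\pi^{-1}(\mathcal{Z}))(\OO_\fp)\to(\PP^n_{\OO_{k,S}}\setminus\mathcal{Z})(\OO_\fp)$ is surjective for all $\fp\notin S$; positivity and measurability of the local densities are handled via \cite[Lem.~3.9]{BBL15}.

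One imprecision to correct: your sentence ``By Chebotarev, this forces the reduction of every codimension-one fibre to be split for all but finitely many $\fp$'' is not the right statement when $n\geq 2$. What $\delta_D(\pi)=1$ gives (via the frobenian machinery, Corollary~\ref{cor:is_frob}) is that the set of \emph{closed points} of $\overline{\{D\}}$ over which the fibre is non-split is not Zariski dense in $\overline{\{D\}}$; over any fixed $\fp$ there can still be $\FF_\fp$-points of the divisor with non-split fibre, just confined to a proper closed subset. The correct conclusion is exactly your next line---the bad locus has codimension~$\geq 2$ in $\PP^n_{\OO_{k,S}}$---but the route to it is ``not Zariski dense inside each $\mathcal{T}_D$'', not ``finitely many primes''.
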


The proof of this result is an adaptation of the proof of \cite[Thm.~1.3]{BBL15}, and uses
the sieve of Ekedahl \cite{Eke91}.
Examples of families for which the product of local densities has been explicitly
computed can be found in \cite{BCFJK15}, \cite{BCF16} and \cite{BBL15}. 

Theorem \ref{thm:iff} yields the following corollary:

\begin{corollary} \label{cor:Hasse}
	Let $X$ be a proper, smooth algebraic variety over a number field $k$, equipped
	with a dominant morphism $\pi:X \to \PP_k^n$ with geometrically integral generic fibre. Assume that $\Delta(\pi)=0$ and that,
	outside of a thin subset of $\PP^n(k)$, the smooth fibres of $\pi$ over rational points satisfy the Hasse principle. 
	
	Then $X$ satisfies the 
	Hasse principle.
\end{corollary}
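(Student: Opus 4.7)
The plan is to deduce the corollary from Theorem~\ref{thm:iff} combined with the standard sparsity of thin subsets of $\PP^n(k)$. First I dispose of the trivial case $X(\Adele_k) = \emptyset$, in which the Hasse principle holds vacuously. Otherwise both hypotheses of Theorem~\ref{thm:iff} are in force, so there exists a positive constant $c$ with
$$N_{\mathrm{loc}}(\pi,B) \sim c B^{n+1}$$
as $B\to\infty$, using that Schanuel's theorem gives $\#\{x\in\PP^n(k):H(x)\le B\}\sim c_k B^{n+1}$.

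The next step is to cut out two negligible subsets of $\PP^n(k)$. Let $T\subset\PP^n(k)$ denote the thin subset outside of which smooth fibres of $\pi$ are assumed to satisfy the Hasse principle; Serre's quantitative theorem on thin sets yields $\#\{x\in T:H(x)\le B\} = o(B^{n+1})$. Let $Z\subset\PP^n_k$ be the closed locus over which $\pi$ fails to be smooth. Since $\pi$ is dominant with smooth generic fibre, $Z$ is a proper closed subscheme of $\PP^n_k$ of dimension at most $n-1$, so a routine height estimate gives $\#\{x\in Z(k):H(x)\le B\}\ll B^n = o(B^{n+1})$.

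Comparing these two bounds with the asymptotic for $N_{\mathrm{loc}}(\pi,B)$, for $B$ sufficiently large there must exist some $x\in\PP^n(k)\setminus(T\cup Z(k))$ with $x\in\pi(X(\Adele_k))$. The fibre $\pi^{-1}(x)$ is then a smooth $k$-variety which is everywhere locally solvable, and by the hypothesis on fibres outside $T$ it satisfies the Hasse principle. Hence $\pi^{-1}(x)(k)\ne\emptyset$, producing a $k$-rational point of $X$. The only nontrivial ingredient beyond Theorem~\ref{thm:iff} is Serre's thin-set bound, which is classical; I anticipate no serious obstacle.
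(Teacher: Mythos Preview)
Your proof is correct and follows essentially the same approach as the paper's. The paper's own argument is a one-liner: ``The result follows immediately from Theorem~\ref{thm:iff}, as thin sets have density zero.'' Your version simply spells out the details and, in addition, explicitly removes the non-smooth locus $Z$; this extra step is harmless and could alternatively be absorbed into the thin set, since $Z(k)$ is contained in a proper closed subvariety and hence is itself thin.
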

We use the term ``thin set'' in the sense of Serre \cite[\S9.1]{Ser97b}. The result follows immediately from  Theorem \ref{thm:iff}, as thin sets have density zero \cite[Thm.~13.1.3]{Ser97b}.

There is a large industry of proving results like Corollary \ref{cor:Hasse} via
the fibration method (see e.g. \cite{HW15} for a recent highlight).
One usually requires that very few fibres over codimension
one points are non-split, or that non-split fibres only occur over linear subspaces.
In Corollary \ref{cor:Hasse} however, we allow arbitrarily many non-split 
fibres over arbitrary codimension one points $D$, provided that each such $D$ satisfies $\delta_D(\pi) = 1$. It would be interesting to construct new examples of fibrations with non-split fibres over codimension $1$ points and $\Delta(\pi) = 0$, beyond those already studied by Colliot-Th\'el\`ene in \cite{CT14}.

Our last result is a version of Theorem \ref{thm:non-split}
for integral points. Let us write $k_\infty = \OO_k \otimes_\ZZ \RR$, and let $\| \cdot \|$ be an $\RR$-vector space norm
on $k_\infty^{n}$. We identify $\OO_k^n$ with its image in $k_\infty^n$, viewed as a full rank sublattice.
Given $\pi:X \to \mathbb{A}_k^n$, define
\begin{equation} \label{def:Nloc_affine}
	N_{\mathrm{loc}}(\pi,B) = \#\{\x \in \OO_k^n: \x \in \pi(X(\Adele_k)), \|\x\| \leq B\}.
\end{equation}

\begin{theorem} \label{thm:non-split_integral}
	Let $X$ be a smooth algebraic variety over a number field $k$, equipped
	with a proper, dominant morphism $\pi:X \to \mathbb{A}_k^n$ with geometrically integral generic fibre. Then
	$$N_{\mathrm{loc}}(\pi,B) \ll  \frac{B^{n}}{(\log B)^{\Delta(\pi)}},
	\quad \mbox{where } \quad
	\Delta(\pi) = \sum_{D \in (\mathbb{A}_k^n)^{(1)}}\left(1 - \delta_D(\pi)\right).$$
\end{theorem}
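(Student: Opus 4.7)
My plan is to parallel the proof of Theorem \ref{thm:non-split} directly in the affine setting. In fact, the integral version is if anything more natural: $\OO_k^n$ is a lattice in $k_\infty^n$, which is the classical home of sieve methods, while the projective case first requires passing to primitive coordinate representatives. The proof I have in mind has three steps: (i) reduce to a congruence condition modulo primes $\fp$ via Lang--Weil and Hensel; (ii) show that the density calculation of ``bad'' residues from Theorem \ref{thm:non-split} transfers unchanged from $\PP^n$ to $\mathbb{A}^n$; and (iii) apply an upper bound sieve.

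For step (i), I would choose a proper flat model $\pi \colon \mathcal{X} \to \mathbb{A}^n_{\OO_{k,S}}$ for a suitable finite set $S$ of primes and set
$$B_\fp \;=\; \{x_\fp \in \mathbb{A}^n(\FF_\fp) : \mathcal{X}_{x_\fp} \text{ is non-split over } \FF_\fp\} \qquad (\fp \notin S).$$
Because $\pi$ is proper and smooth on its generic fibre, the combination of the Lang--Weil estimates (a geometrically non-integral $\FF_\fp$-variety has no smooth $\FF_\fp$-point once $N(\fp)$ is large) with Hensel's lemma applied to the smooth locus yields the following: whenever $x \in \OO_k^n$ satisfies $\pi^{-1}(x)(k_\fp) \neq \emptyset$, we have $x \bmod \fp \notin B_\fp$ for all but finitely many $\fp$. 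This reduces the problem to estimating the number of $x \in \OO_k^n$ with $\|x\| \leq B$ that avoid $B_\fp$ modulo $\fp$ for every $\fp$ in a well-chosen range of norms $\le Y$.

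For step (ii), I observe that the Chebotarev/group-theoretic argument used in the proof of Theorem \ref{thm:non-split} to identify $\Delta(\pi) = \sum_D (1 - \delta_D(\pi))$ depends only on Galois orbits of geometric components of fibres over codimension one points, and $\mathbb{A}^n(\FF_\fp)$ differs from $\PP^n(\FF_\fp)$ only by the $O(N(\fp)^{n-1})$ residues at infinity. The same argument therefore yields
$$\sum_{N(\fp) \leq X} |B_\fp| \;=\; \Delta(\pi) \sum_{N(\fp) \leq X} N(\fp)^{n-1} \;+\; o\!\left(\frac{X^n}{\log X}\right),$$
with $\Delta(\pi) = \sum_{D \in (\mathbb{A}^n_k)^{(1)}} (1 - \delta_D(\pi))$ as claimed. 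For step (iii), an upper bound sieve (the large sieve, or an Ekedahl-style geometric sieve) applied with $Y = (\log B)^A$ for suitable $A$ produces a bound of shape $\ll B^n \prod_{\fp \in \mathcal{P}(Y)} (1 - |B_\fp|/N(\fp)^n)$; partial summation on the density estimate together with Mertens' theorem converts this product into the factor $(\log B)^{-\Delta(\pi)}$, finishing the proof.

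The main obstacle is the sieve step: one needs sufficient uniformity in the sizes $|B_\fp|$ (not merely an average bound) and a careful choice of $Y$ so that the sieve error terms are absorbed. These are the same technical hurdles as in the projective case and should be handled by the same arguments; I do not expect any new geometric input beyond that already developed for Theorem \ref{thm:non-split}.
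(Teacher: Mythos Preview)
Your step (i) contains a genuine gap. The implication you assert --- that if $\pi^{-1}(x)(k_\fp)\neq\emptyset$ then $x\bmod\fp\notin B_\fp$ for all $\fp$ outside a fixed finite set --- is simply false. Take the conic bundle $ax^2+ty^2=z^2$ over $\mathbb{A}^1_\QQ$ with $a$ a non-square, and a prime $p$ with $a\notin\QQ_p^{*2}$. For $t=p^2$ the conic has the $\QQ_p$-point $(0:1:p)$, yet $t\bmod p=0$ lands in $B_p$ since the fibre over $0$ is non-split. Your argument breaks at the step where you pass from an $\OO_\fp$-point of the model to a \emph{smooth} $\FF_\fp$-point of the fibre over $x\bmod\fp$: properness gives you an $\FF_\fp$-point, but nothing forces it into the smooth locus of that fibre. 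In the example, $(0:1:0)$ is the singular point of $\{ax^2=z^2\}$. Regularity of the total space $\mathcal{X}$ is not enough; you need regularity of the pullback $\mathcal{X}\times_{\mathbb{A}^n}\Spec\OO_\fp$, and that fails precisely when the section $x$ meets the non-smooth locus non-transversally.

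This is exactly what the paper's sparsity criterion (Theorem~\ref{thm:sparsity}) is designed to repair. It shows that the desired implication \emph{does} hold once one additionally imposes $\fp\parallel f(x)$ (and $\fp\nmid g(x)$) for a polynomial $f$ cutting out the non-smooth locus; this transversality forces the pullback to be regular (Proposition~\ref{regularity}), after which the reduction-to-smooth-point argument goes through. The cost is that the sieving conditions live modulo $\fp^2$ rather than modulo $\fp$, so one runs the large sieve with $m=2$ (Proposition~\ref{prop:large_sieve}) and sieve level $B^{1/4}$; the density calculation (Lemma~\ref{lem:r}, Lemma~\ref{lem:L(B)}) then recovers the exponent $\Delta(\pi)$ via the frobenian zeta-function machinery. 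Note also that the logical order in the paper is the reverse of what you suggest: Theorem~\ref{thm:non-split_integral} is proved directly and Theorem~\ref{thm:non-split} is deduced from it, not the other way around.
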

Here $\delta_D(\pi)$ is defined in a manner similar to $\eqref{def:delta_D}$.
The analogue here of Theorem \ref{thm:iff} does not hold in general, due to possible obstructions at the real places (for example if $\pi_v(X(k_v))$ is bounded for some real place $v$).

The proofs of our results require input from geometry and analytic number theory. The key geometric
ingredient (building on ideas of Wittenberg) is Theorem \ref{thm:sparsity}. Given  $\pi$ as above and a finite place $v$ of $k$, this theorem gives sufficient
conditions under which the fibre of $\pi$ above a closed point, which is $v$-adically close to a closed point with non-split fibre, has no $k_v$-point; one should see this as certain local points being ``sparse'' close to a non-split fibre.
An easy example illustrating this type of behaviour (used by Serre in \cite{Ser90}) is the following: let $a \in k^*$ be a non-square and let $v$ be a finite place not dividing $2$ such that $a \not \in k_v^{* 2}$. Consider the family of conics
\begin{equation} \label{serre}
C: \quad ax^2 + ty^2 = z^2  
\end{equation}
over $\mathbb{A}^1_k$. If $t$ has valuation $1$ with respect to $v$, then $C_t$ has no $k_v$-point. It is precisely this type of observation which we will generalise in \S \ref{Sec:sparsity}.

As for the analytic input, the criterion provided by Theorem \ref{thm:sparsity} is ideal for an application of the large sieve. In order to perform the sieve, we will use versions of the Chebotarev density theorem for arithmetic schemes  proven by Serre \cite[\S9]{SerNXp}.
This builds on the method of Serre for conic bundles \cite{Ser90}.

We end with the following conjecture:
\begin{conjecture} \label{question}
	Let $\pi:X\to \PP_k^n$ be as in Theorem \ref{thm:non-split}. Assume that at least one fibre of $\pi$ is everywhere locally solvable
	and that the fibre of $\pi$ over every codimension one
	point of $\PP_k^n$ has an irreducible component of multiplicity one. 
	
	Then the bounds given in Theorem \ref{thm:non-split} are sharp.
\end{conjecture}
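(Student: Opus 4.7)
The plan is to prove the matching lower bound $N_{\mathrm{loc}}(\pi,B) \gg B^{n+1}/(\log B)^{\Delta(\pi)}$. I would start by fixing a regular, proper, flat model $\pi:\mathcal{X} \to \PP_{\OO_{k,S}}^n$ for some enlarged finite set $S$ of places. The assumption $X(\Adele_k) \neq \emptyset$ provides an adelic point whose image in $\PP^n(\Adele_k)$ generates, by smoothness and the implicit function theorem, an open neighbourhood $U = \prod_v U_v$ in which every rational point has fibre with a $k_v$-point for all $v \in S$; this reduces the problem to producing rational points in $U$ whose fibres are $k_\fp$-solvable at all $\fp \notin S$.

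The key local input comes from the hypothesis that every fibre over $D \in (\PP_k^n)^{(1)}$ has a multiplicity-one component. For $\fp \notin S$ and $x_\fp \in \PP^n(\FF_\fp)$ reducing from a closed point $D$ whose fibre is split over the residue field $\FF_\fp$ (which, by construction of $\Gamma_D(\pi)$, corresponds to the Frobenius at $\fp$ fixing an element of $I_D(\pi)$), the multiplicity-one component gives a generically smooth $\FF_\fp$-variety, and the Lang--Weil estimates produce a smooth $\FF_\fp$-point on it as soon as $N(\fp)$ is large enough. Hensel's lemma then lifts this to a $k_\fp$-point in the fibre. Consequently the density of $x_\fp$ whose fibre is \emph{not} locally solvable at $\fp$ is bounded above by $(1-\delta_D(\pi)+o(1))/N(\fp)$ contributed by each codimension-one point $D$ with small residue characteristic; summing over $D$ via the Chebotarev-type arguments of \cite[\S9]{SerNXp} as in the proof of Theorem \ref{thm:non-split} gives the total local defect density $\sim \Delta(\pi)/N(\fp)$.

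The final step is a lower-bound sieve. One wants a count of rational points $x \in U \cap \PP^n(k)$ of height at most $B$ such that, for every $\fp \notin S$ with $N(\fp) \leq B^\theta$ (for some suitable $\theta>0$), the reduction $x_\fp$ avoids the bad set. Applying Ekedahl's geometric sieve \cite{Eke91} in its lower-bound form (as used in \cite{BBL15} for the $\Delta(\pi)=0$ case), one obtains
\[
N_{\mathrm{loc}}(\pi,B) \;\gg\; B^{n+1}\prod_{\substack{\fp \notin S \\ N(\fp)\leq B^{\theta}}}\!\left(1-\frac{\Delta(\pi)+o(1)}{N(\fp)}\right) \;\asymp\; \frac{B^{n+1}}{(\log B)^{\Delta(\pi)}},
\]
the last step being Mertens' theorem for the number field $k$. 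Combined with Theorem \ref{thm:non-split}, this yields the sharpness claim.

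The main obstacle — and the reason this remains a conjecture — is the lower-bound sieve step. Ekedahl's sieve naturally gives upper bounds; producing a matching lower bound requires delicate uniformity in the error term over primes $\fp$ with $N(\fp)\leq B^{\theta}$, and the density computations above must be made uniform across all codimension-one divisors $D$ whose residue characteristic lies in this range (including those of large degree, where the image in $\PP^n(\FF_\fp)$ is very sparse). A secondary difficulty is verifying that the local density at each $\fp$ is bounded below by a positive quantity even when several non-split $D$'s collide modulo $\fp$; this demands a resolution-theoretic analysis of the reduction of $\mathcal{X}$ over high-codimension strata, which is not directly controlled by the hypotheses of the conjecture.
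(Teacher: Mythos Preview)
The statement you are attempting to prove is labelled as a \emph{conjecture} in the paper, and the paper does not contain a proof of it. The authors explicitly present it as open, noting only that it holds in special cases (certain conic bundles, Severi--Brauer schemes, norm-one tori) and that Theorem~\ref{thm:iff} settles the degenerate case $\Delta(\pi)=0$. So there is no ``paper's own proof'' to compare your proposal against.

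Your sketch is a reasonable heuristic, and you correctly flag the essential difficulty in your final two paragraphs. One point worth correcting: the sieve of Ekedahl as deployed in \cite{BBL15} is not a lower-bound sieve in the sense you need. It produces an asymptotic formula (and hence a lower bound) precisely because, in the setting of Theorem~\ref{thm:iff}, the local obstructions have density $O(1/\Norm(\fp)^2)$ at almost all $\fp$, so that $\prod_\fp(1-\omega(\fp))$ converges to a positive number. When $\Delta(\pi)>0$ the relevant local densities are of order $\Delta(\pi)/\Norm(\fp)$, the product diverges to zero, and Ekedahl's method yields nothing. Obtaining a lower bound of the shape $B^{n+1}/(\log B)^{\Delta(\pi)}$ would require a genuine lower-bound sieve of sieve dimension $\Delta(\pi)$, together with a level-of-distribution statement (equidistribution of the bad residue classes modulo squarefree ideals up to some positive power of $B$). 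The cases of the conjecture that are known were handled by methods tailored to the specific families, not by a general argument of the type you outline; this is why the paper leaves the statement as a conjecture.
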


This conjecture is known to hold in some special cases, 
e.g.~for some families of conics \cite{Guo95}, \cite{Hoo93}, \cite{Hoo07}, \cite{Sof16}, Severi--Brauer varieties \cite{Lou13} and
norm one tori \cite{BN15} (see \S \ref{sec:tori}). Moreover, when $\Delta(\pi) = 0$, Theorem \ref{thm:iff} proves that Conjecture \ref{question} holds; this result supersedes various special cases, e.g.~\cite[Thm.~3.6]{PV04}. This conjecture builds on the conjectural framework of the first-named author began in \cite[\S1.3]{Lou13}. Note that Theorem \ref{thm:non-split} will certainly not be sharp as soon as there are many codimension one points $D$ for which $\pi^{-1}(D)$ contains no irreducible component of multiplicity one. In such cases far fewer fibres tend to be everywhere locally solvable (see \cite{CTSSD97} for examples of this phenomenon). Sieves do not yield good upper bounds in such cases; for example, the large sieve gives very poor upper bounds for counting squareful integers.

We end this introduction with an overview of the paper. In \S\ref{Sec:sparsity}, we prove the sparsity criterion for local points needed for the sieving process. Some basic properties of the splitting densities appearing in our results are proven in \S\ref{Sec:split}. The main results will be proven in \S\ref{Sec:proofs}. Finally, in \S\ref{Sec:examples}, we give examples showing that our results generalise and improve upon known special cases in the literature, together with new applications. 

\subsection*{Notation} If $Y$ is a noetherian scheme, the set of codimension $i$ points will be denoted by $Y^{(i)}$, and the set of closed points by $\underline{Y}$. For $y \in Y$, we denote the residue field of $y$ by $\kappa(y)$.
A variety over a field $k$ is a reduced, separated scheme of finite type over $k$. If $k$ is a number field, by a prime (or finite place) $\fp$ of $k$ we mean a prime ideal of the ring of integers $\OO_k$. The completion of $k$ at $\fp$ will be denoted by $k_\fp$, with local ring $\OO_\fp$ and residue field $\FF_\fp$. By $\mathbf{A}_k$, we mean the ring of ad\`eles (whereas $\mathbb{A}^n_k$ will denote affine $n$-space over $k$). 

Given a scheme $X$, the multiplicity of an irreducible component $Z$ of $X$ is defined to be the length
of the local ring of $X$ at the generic point of $Z$ (see \cite[\S 1.5]{fulton}). 
In particular, an \emph{irreducible component of multiplicity $1$} is one which is generically reduced.

\begin{ack}
	We benefited from helpful discussions with Fran\c{c}ois Brunault, Jean-Louis Colliot-Th\'{e}l\`{e}ne, Matthieu Romagny, 
	Matthias Sch\"{u}tt, Alexei Skorobogatov, Efthymios Sofos and Michael Stoll. We thank Olivier Wittenberg for crucial input on the notion of submersivity used in \S\ref{Sec:sparsity} and for discussions on elliptic fibrations. We are grateful to the referee for a careful reading of our paper and many useful comments.
	The second-named author was supported by ERC grant MOTMELSUM (R. Cluckers) and is a postdoctoral fellow of FWO Vlaanderen (Research Foundation -- Flanders).

\end{ack}

\section{Sparsity of local points around non-split fibres} \label{Sec:sparsity}

We gather some facts on the notion of \emph{submersivity} of a morphism of schemes in \S\ref{sec:3.1}. This paragraph is based on unpublished notes of Olivier Wittenberg, who generously shared his thoughts with us. In \S\ref{sec:3.2}, we prove the crucial ``sparsity criterion'' for local points around non-split fibres, needed in \S\ref{Sec:proofs}.

\subsection{Submersive morphisms and regularity} \label{sec:3.1}

\begin{definition}
A morphism of schemes $f: X \to Y$ is \emph{submersive at a point $x \in X$} if the natural map $T_xX \to T_yY \otimes_{\kappa(y)} \kappa(x)$ is surjective, where $y = f(x)$; here $T_xX$ denotes the tangent space of $X$ at $x$.
We say that $f$ is \emph{submersive} if it is submersive at every point of $X$. \end{definition}

The following lemma is trivial.

\begin{lemma} \label{lem:composition} Let $f: X \to Y$ and $g: Y \to Z$ be morphisms of schemes. If $f$ is submersive at $x \in X$ and $g$ is submersive at $f(x)$, then $g \circ f$ is submersive at $x$. In particular, if $f$ and $g$ are submersive, then $g \circ f$ is submersive.

Conversely, if $g \circ f$ is submersive at $x \in X$, then $g$ is submersive at $f(x)$. In particular, if $g \circ f$ is submersive and $f$ is surjective, then $g$ is submersive.
\end{lemma}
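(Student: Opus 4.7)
The plan is a short diagram chase on tangent spaces at a point, combined with one application of faithful flatness. Writing $y = f(x)$ and $z = g(y)$, the natural map on tangent spaces associated to $g \circ f$ at $x$ factors as
$$T_xX \xrightarrow{\alpha} T_yY \otimes_{\kappa(y)} \kappa(x) \xrightarrow{\beta} T_zZ \otimes_{\kappa(z)} \kappa(x),$$
where $\alpha$ is the map associated to $f$ at $x$ and $\beta$ is obtained by base-changing the map $\phi\colon T_yY \to T_zZ \otimes_{\kappa(z)} \kappa(y)$ associated to $g$ at $y$ along the residue field extension $\kappa(y) \to \kappa(x)$. This factorisation is what one gets by dualising the corresponding factorisation on conormal modules for the composition of closed immersions of points, and it reduces the lemma to linear algebra.

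For the forward direction, submersivity of $f$ at $x$ gives surjectivity of $\alpha$, and submersivity of $g$ at $y$ gives surjectivity of $\phi$; tensor product preserves surjections, so $\beta$ is surjective, and hence so is $\beta \circ \alpha$. This is exactly submersivity of $g \circ f$ at $x$. The global statement then follows by quantifying over all $x \in X$.

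For the converse, surjectivity of $\beta \circ \alpha$ forces surjectivity of $\beta = \phi \otimes_{\kappa(y)} \kappa(x)$, and faithful flatness of the field extension $\kappa(y) \to \kappa(x)$ then implies that $\phi$ itself is surjective, which is submersivity of $g$ at $y = f(x)$. The "in particular" version uses surjectivity of $f$ to ensure that every $y \in Y$ is of the form $f(x)$ for some $x \in X$. I foresee no real obstacle: the only step that is not purely formal is the descent of surjectivity along the residue field extension, and this is a standard consequence of the faithful flatness of any field extension.
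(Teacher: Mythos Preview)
Your argument is correct. The paper declares this lemma ``trivial'' and gives no proof at all, so you have simply supplied the routine diagram chase that the authors leave to the reader. The factorisation you write down is the natural one (dualising the cotangent/conormal picture for the composite), and the only non-formal step --- descending surjectivity of $\beta$ to surjectivity of $\phi$ via faithful flatness of the field extension $\kappa(y)\hookrightarrow\kappa(x)$ --- is standard. There is nothing to compare: your approach is the expected one, just spelled out.
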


The following result is well-known:

\begin{proposition} Smooth morphisms are submersive. \end{proposition}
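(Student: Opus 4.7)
The plan is to reduce to two special cases using the local structure theorem for smooth morphisms. By dualising, submersivity of $f$ at $x$ is equivalent to injectivity of the natural map
$$\mathfrak{m}_y/\mathfrak{m}_y^2 \otimes_{\kappa(y)} \kappa(x) \longrightarrow \mathfrak{m}_x/\mathfrak{m}_x^2, \qquad y := f(x).$$
By the standard local structure theorem for smooth morphisms (cf.\ EGA~IV, 17.11.4), since $f$ is smooth at $x$ there is an open neighbourhood $U \subseteq X$ of $x$ admitting a factorisation $f|_U = p \circ g$, with $g : U \to \mathbb{A}^n_Y$ \'etale and $p : \mathbb{A}^n_Y \to Y$ the structural projection. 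By Lemma~\ref{lem:composition} it will then suffice to show that $g$ is submersive at $x$ and that $p$ is submersive at $g(x)$.

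For the \'etale factor $g$, I will combine flatness with the unramifiedness identity $\mathfrak{m}_{g(x)}\mathcal{O}_{U,x} = \mathfrak{m}_x$ to obtain
$$\mathfrak{m}_x/\mathfrak{m}_x^2 \;=\; \mathfrak{m}_{g(x)}\mathcal{O}_{U,x}/\mathfrak{m}_{g(x)}^2\mathcal{O}_{U,x} \;\cong\; \mathfrak{m}_{g(x)}/\mathfrak{m}_{g(x)}^2 \otimes_{\kappa(g(x))} \kappa(x),$$
which shows that the displayed map is in fact an isomorphism in this case.

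For the projection $p$, iterating Lemma~\ref{lem:composition} along the tower $\mathbb{A}^n_Y \to \mathbb{A}^{n-1}_Y \to \cdots \to Y$ reduces the problem to the case $p : \mathbb{A}^1_Y \to Y$. For a point $z$ above $y$, the case where $z$ is the generic point of the fibre is immediate because $\mathfrak{m}_y\mathcal{O}_{\mathbb{A}^1_Y,z} = \mathfrak{m}_z$. The main obstacle will be the remaining case, in which $z$ is a closed point of the fibre corresponding to an irreducible monic polynomial $h \in \kappa(y)[t]$: the assertion then amounts to verifying the ideal containment
$$\mathfrak{m}_y \mathcal{O}_{\mathbb{A}^1_Y,z} \cap \mathfrak{m}_z^2 \;\subseteq\; \mathfrak{m}_z \cdot \mathfrak{m}_y\mathcal{O}_{\mathbb{A}^1_Y,z}.$$
I expect this to follow from a short explicit computation. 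Choosing any lift $\tilde h$ of $h$ to $A[t]$ (where $A = \mathcal{O}_{Y,y}$), the maximal ideal at $z$ is $\mathfrak{m}_z = (\mathfrak{m}_y, \tilde h)$; expanding $\mathfrak{m}_z^2$ along these generators, the only non-trivial contribution comes from the $\tilde h^2$-term, which is controlled by reducing modulo $\mathfrak{m}_y$ and exploiting the fact that the fibre $\kappa(y)[t]_{(h)}$ is a discrete valuation ring with uniformiser $\bar h$.
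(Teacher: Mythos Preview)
Your proposal is correct and follows essentially the same approach as the paper: factor locally via the structure theorem as an \'etale morphism composed with the projection $\mathbb{A}^n_Y \to Y$, verify each factor is submersive, and conclude by Lemma~\ref{lem:composition}. The paper simply asserts that ``it is not hard to check'' that both factors are submersive, whereas you spell out the verification (the cotangent isomorphism for the \'etale part, and the explicit ideal computation for the projection); your details are sound.
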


Indeed, any smooth morphism $f: X \to Y$ factors locally as the composition of an \'etale morphism $h: X \to \mathbb{A}^n_Y$ (for some $n$) and the projection map $g: \mathbb{A}^n_Y \to Y$. It is not hard to check that both $g$ and $h$ are submersive. The result then follows from the previous lemma.

%\begin{proof} It is clear that \'etale morphisms are submersive. Hence it suffices to prove that for any scheme $S$, the morphism $\mathbb{A}^1_S \to S$ is submersive. This last statement can be rephrased in terms of commutative algebra, as follows. Let $A$ be a local ring, let $\mathfrak{m}_A$ be its maximal ideal and let $\mathfrak{m}$ be a prime ideal of $A[t]$ such that $\mathfrak{m} \cap A = \mathfrak{m}_A$. We need to show that the natural map $$\mathfrak{m}_A/\mathfrak{m}_A^2 \otimes_{\kappa(A)} A[t]/\mathfrak{m} \to \mathfrak{m}/\mathfrak{m}^2$$ is injective, which is equivalent to the inclusion $$\mathfrak{m}_A A[t] \cap \mathfrak{m}^2 \subseteq \mathfrak{m}_A A[t]  \cdot \mathfrak{m}.$$ This is a triviality if $\mathfrak{m}_A A[t] = \mathfrak{m}$; if not, there exists a monic polynomial $f \in A[t]$ such that $\mathfrak{m} = \mathfrak{m}_A A[t] + f A[t]$. It is then sufficient to prove that $$\mathfrak{m}_A A[t] \cap f A[t] \subseteq \mathfrak{m}_A f A[t].$$ Indeed, if $g \in A[t]$ is a polynomial such that the product $fg$ has all of its coefficients in $\mathfrak{m}_A$, then $g$ has all of its coefficients in $\mathfrak{m}_A$, since $f$ is monic.
%\end{proof}

%The following proposition is an analogue of ``generic smoothness'' for the class of submersive morphisms.

\begin{proposition}\label{submersivity} Let $Y$ be an integral scheme with generic point $\eta$ such that $\mathrm{char}\,\kappa(\eta) = 0$. Let $f: X \to Y$ be a morphism of finite type. Then there exists a dense open subscheme $U \subseteq Y$ such that $f_U: X_U \to U$ is submersive. \end{proposition}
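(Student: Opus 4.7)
The plan is to argue by induction on $\dim X$, with trivial base case $\dim X = 0$: either $f$ is not dominant, in which case $U := Y \setminus \overline{f(X)}$ works with $X_U$ empty; or $X$ dominates $Y$, forcing $\dim Y = 0$ and making both Zariski tangent spaces vanish.

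For the inductive step, we first make standard reductions. The closed immersion $X_{\mathrm{red}} \hookrightarrow X$ induces an injection $T_x X_{\mathrm{red}} \hookrightarrow T_x X$ at each point, compatible with $f$, so we may replace $X$ by $X_{\mathrm{red}}$. If some irreducible component of $X$ fails to dominate $Y$, we remove the closure of its image from $Y$; hence we may assume every component of $X$ dominates $Y$. By generic flatness (Raynaud--Gruson), further shrinking $Y$ makes $f$ flat. Under these hypotheses the generic fibre $X_\eta$ is reduced (a localisation of a reduced ring is reduced) and of finite type over the perfect field $\kappa(\eta)$, so by generic smoothness in characteristic zero its smooth locus $V_\eta$ is dense open and $Z := X_\eta \setminus V_\eta$ is a proper closed subscheme. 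Let $Z' \subseteq X$ denote the closure of $Z$ equipped with its reduced structure.

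The key input is the dimension bound $\dim Z' < \dim X$: each irreducible component $Z_j$ of $Z$ is properly contained in the component $X_{\eta,i}$ of $X_\eta$ containing its generic point $\zeta_j$, so $\zeta_j$ differs from the generic point $\xi_i$ of the ambient component $X_i$ of $X$; consequently the closure of $\{\zeta_j\}$ in $X$ is a proper irreducible closed subset of $X_i$, of strictly smaller dimension. A routine argument about closures in the subspace $X_\eta \subseteq X$ yields $X_\eta \cap Z' = Z$, so $(X \setminus Z')_\eta = V_\eta$ is smooth over $\kappa(\eta)$. The morphism $(X \setminus Z') \to Y$ is therefore flat of finite type with smooth generic fibre; by the fibre criterion for smoothness, its non-smooth locus is closed and avoids the generic fibre, and by Chevalley's theorem has constructible image in the irreducible $Y$ missing $\eta$, hence with proper closure in $Y$. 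Removing that closure yields a dense open $U_1 \subseteq Y$ over which $(X \setminus Z')|_{U_1} \to U_1$ is smooth, in particular submersive.

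By the inductive hypothesis applied to $Z' \to Y$, there exists a dense open $U_2 \subseteq Y$ over which $Z'|_{U_2} \to U_2$ is submersive. Setting $U := U_1 \cap U_2$, we verify submersivity of $f$ at every $x \in X_U$: points $x \notin Z'$ are handled by smoothness of $X \setminus Z' \to U_1$ at $x$, while for $x \in Z'$ the closed immersion $Z' \hookrightarrow X$ gives an injection $T_x Z' \hookrightarrow T_x X$, and surjectivity of $T_x Z' \to T_{f(x)} Y \otimes \kappa(x)$ forces surjectivity of $T_x X \to T_{f(x)} Y \otimes \kappa(x)$. The main obstacle is proving the dimension bound $\dim Z' < \dim X$ in the stated generality (no dimension formula for $Y$-morphisms is a priori available); the generic-point argument above sidesteps this cleanly. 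A secondary subtlety is the identification $(X \setminus Z')_\eta = V_\eta$, which ensures that excising $Z'$ does not accidentally remove smooth points of the generic fibre.
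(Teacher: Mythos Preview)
Your proof is correct and follows essentially the same strategy as the paper's: reduce to $X$ reduced and flat over $Y$, use generic smoothness of $X_\eta$ in characteristic zero to isolate a closed bad locus of smaller dimension, apply induction there, and conclude via the tangent-space injection from a closed subscheme (the paper's Lemma~\ref{lem:composition}). Two minor differences are worth noting. First, the paper inducts on $\dim X_\eta$ rather than $\dim X$; this is cleaner since $X_\eta$ is of finite type over a field and hence always finite-dimensional, whereas your induction on $\dim X$ tacitly assumes $\dim X < \infty$ --- fine in the paper's applications, but not guaranteed by the hypotheses as stated. Second, the paper takes $W$ to be the non-smooth locus of $f$ itself rather than the closure $Z'$ of the non-smooth locus of $X_\eta$, so the complement $X \setminus W$ is already smooth over $Y$ and your extra shrinking step to $U_1$ becomes unnecessary. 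Finally, your base-case assertion ``forcing $\dim Y = 0$'' is not literally correct (consider $\Spec \QQ \to \Spec \ZZ_{(p)}$), but harmless: each point of a dominant zero-dimensional component maps to $\eta$, and $T_\eta Y = 0$ since the local ring at the generic point is a field.
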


The hypothesis on the characteristic is necessary for such a statement to be true; consider for example $Y = \Spec(k[t,\frac{1}{t}])$ and $X = \Spec(k[t,\frac{1}{t},x]/(x^p - t))$, where $k$ is an algebraically closed field of characteristic $p > 0$. Then $X \to Y$ is submersive at the unique point of the generic fibre, but nowhere else.

\begin{proof} By Lemma \ref{lem:composition}, we can assume that $X$ is reduced and, shrinking $Y$ if necessary, that $X$ is flat over $Y$. We will induct on the dimension of the generic fibre $X_\eta$ of $f$. As $f$ is flat and finitely presented, it is open. Therefore if $X_\eta = \emptyset$ then $X = \emptyset$; thus in this case there is nothing to prove. 

So assume that $X_\eta \neq \emptyset$. Let $V \subseteq X$ be the smooth locus of $f$ and denote by $W$ its complement, seen as a reduced closed subscheme of $X$. Since $\kappa(\eta)$ is a field of characteristic zero and $X_\eta$ is reduced, we have $\dim W_\eta < \dim X_\eta$. We can therefore assume, using the induction hypothesis, and shrinking $Y$ if necessary, that the composition $W \hookrightarrow X \to Y$ is submersive. The second half of Lemma \ref{lem:composition} then implies that $f$ is submersive at the points in $W$; hence $f$ is submersive globally, since it is even smooth at the points in $V$.
\end{proof}

Given a regular scheme $X$ and two regular, closed subschemes $Z_1$ and $Z_2$ of $X$, we say that $Z_1$ and $Z_2$ intersect \emph{transversally} at $x$ if $T_x Z_1 + T_x Z_2 = T_x X$ as $\kappa(x)$-subspaces of the vector space $T_x X$.

\begin{proposition} \label{regularity}
Let $f: X \to Y$ be a flat morphism of finite type between regular schemes. Let $Z_1$ and $Z_2$ be regular closed subschemes of $Y$ meeting transversally. Assume that $f$ is smooth above $Y \setminus Z_1$ and that the induced morphism $f: X \times_Y Z_1 \to Z_1$ is submersive. Then $X \times_Y Z_2$ is regular. 
\end{proposition}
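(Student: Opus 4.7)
The plan is to check regularity of $X \times_Y Z_2$ at each point $x$ lying above $y = f(x) \in Z_2$, splitting into two cases according to whether $y$ lies in $Z_1$.

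If $y \notin Z_1$, then $f$ is smooth at $x$ by hypothesis. Base change preserves smoothness, so $X \times_Y Z_2 \to Z_2$ is smooth at $x$; since $Z_2$ is regular, so is $X \times_Y Z_2$ at $x$. The substantive case is $y \in Z_1 \cap Z_2$. Here I would first use transversality to choose good local equations: since $Y$, $Z_1$, $Z_2$ are regular at $y$ and $T_y Z_1 + T_y Z_2 = T_y Y$, the two conormal subspaces of $Z_1$ and $Z_2$ in $Y$ at $y$ meet trivially inside $\mathfrak{m}_y/\mathfrak{m}_y^2$, so one can pick generators $a_1,\dots,a_r$ and $b_1,\dots,b_s$ of the ideals of $Z_1$ and $Z_2$ in $\mathcal{O}_{Y,y}$ whose classes $\bar a_1,\dots,\bar a_r,\bar b_1,\dots,\bar b_s$ are linearly independent in $\mathfrak{m}_y/\mathfrak{m}_y^2$, i.e.\ part of a regular system of parameters for $\mathcal{O}_{Y,y}$. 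Flatness of $f$ gives $\mathcal{O}_{X\times_Y Z_2, x} = \mathcal{O}_{X,x}/(b_1,\dots,b_s)\mathcal{O}_{X,x}$, so since $\mathcal{O}_{X,x}$ is a regular local ring, regularity of this quotient is equivalent to showing that the images of $b_1,\dots,b_s$ in $\mathfrak{m}_x/\mathfrak{m}_x^2$ are linearly independent; they then extend to a regular system of parameters and the quotient is regular of the expected dimension.

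To prove that independence I would invoke the submersivity of $f_{|X_1}\colon X_1 \to Z_1$ at $x$, where $X_1 = X \times_Y Z_1$. Dualizing the tangent-space condition, submersivity at $x$ is equivalent to injectivity of the cotangent map
\[
\mathfrak{m}_{Z_1,y}/\mathfrak{m}_{Z_1,y}^2 \otimes_{\kappa(y)} \kappa(x) \longrightarrow \mathfrak{n}_x/\mathfrak{n}_x^2,
\]
where $\mathfrak{n}_x$ is the maximal ideal of $\mathcal{O}_{X_1, x} = \mathcal{O}_{X,x}/(a_1,\dots,a_r)\mathcal{O}_{X,x}$. By the choice of parameters, the classes $\bar b_1,\dots,\bar b_s$ are linearly independent in $\mathfrak{m}_{Z_1,y}/\mathfrak{m}_{Z_1,y}^2 = \mathfrak{m}_y/(\mathfrak{m}_y^2 + (a_1,\dots,a_r))$, so their pullbacks are linearly independent in $\mathfrak{n}_x/\mathfrak{n}_x^2 = \mathfrak{m}_x/(\mathfrak{m}_x^2 + (a_1,\dots,a_r)\mathcal{O}_{X,x})$, and hence \emph{a fortiori} linearly independent in $\mathfrak{m}_x/\mathfrak{m}_x^2$.

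The main delicacy is the dualization step that translates submersivity of the restricted morphism $f_{|X_1}$ into the cotangent injectivity used above; once that is in place the rest is standard linear algebra on regular systems of parameters. Transversality plays no role beyond guaranteeing that one can select parameters cutting out $Z_2$ whose classes remain independent modulo those cutting out $Z_1$, so that injectivity of the cotangent map applied to the $\bar b_i$ yields the conclusion.
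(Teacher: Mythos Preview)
Your argument is correct and is the cotangent-space dual of the paper's proof. The paper works directly with tangent spaces: it establishes the short exact sequence
\[
0 \to T_x X_{Z_2} \to T_x X \to (T_y Y/T_y Z_2) \otimes_{\kappa(y)} \kappa(x) \to 0,
\]
where right exactness comes from transversality together with the surjection $T_x X_{Z_1} \twoheadrightarrow T_y Z_1 \otimes \kappa(x)$ given by submersivity, and then compares $\dim_{\kappa(x)} T_x X_{Z_2}$ with $\dim \mathcal{O}_{X_{Z_2},x}$ via the flatness dimension formula $\dim \mathcal{O}_{X,x} = \dim \mathcal{O}_{Y,y} + \dim \mathcal{O}_{X_y,x}$. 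Your formulation via regular systems of parameters is the same linear algebra dualised, with the minor bonus that once the $\bar b_j$ are shown to be independent in $\mathfrak{m}_x/\mathfrak{m}_x^2$, regularity of $\mathcal{O}_{X,x}/(b_1,\dots,b_s)$ is immediate and no separate dimension count is needed; in fact your proof never actually uses flatness (the identification $\mathcal{O}_{X\times_Y Z_2,x}=\mathcal{O}_{X,x}/(b_j)$ is just the definition of the fibre product), so the hypothesis could be dropped in this approach.
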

\begin{proof} It suffices to prove that $X_{Z_2} := X \times_Y Z_2$ is regular above the points in $Z_1 \cap Z_2$. Let $x \in X_{Z_2}$ be such a point, i.e. $y=f(x) \in Z_1 \cap Z_2$. We claim that there is a short exact sequence of $\kappa(x)$-vector spaces \begin{equation} 0 \to T_xX_{Z_2} \to T_x X \to (T_{y} Y/T_{y} Z_2) \otimes_{\kappa(y)} \kappa(x) \to 0. \label{SES} \end{equation} Only right exactness of the above sequence is non-trivial. This follows from the following simple observations: the image of $T_{y} Z_1$ generates $T_{y} Y/T_{y} Z_2$, by the transversality assumption; and the map $T_x {X_{Z_1}} \to T_{y}Z_1 \otimes_{\kappa(y)} \kappa(x)$ is surjective, since $f: X_{Z_1} \to Z_1$ is submersive. Next, the exactness of (\ref{SES}) implies that $$\dim_{\kappa(x)} T_x X_{Z_2} = \dim \mathcal{O}_{X,x} - \dim \mathcal{O}_{Y,y} + \dim \mathcal{O}_{Z_2,y}$$ since $X$, $Y$ and $Z_2$ are regular. Since $X$ is flat over $Y$, \cite[Thm. 14.2.1]{EGAIV} gives $$\dim \mathcal{O}_{X,x} = \dim \mathcal{O}_{Y,y} + \dim \mathcal{O}_{X_{y},x}$$ where $X_y$ is the fibre of $f$ above $y$. Hence (again by  \cite[Thm. 14.2.1]{EGAIV}) $$\dim_{\kappa(x)} T_x X_{Z_2} = \dim \mathcal{O}_{X_{y},x} + \dim \mathcal{O}_{Z_2,y} = \dim \mathcal{O}_{X_{Z_2},x}.$$ It follows that $X_{Z_2}$ is regular at $x$, as required.\end{proof}

\subsection{Sparsity of local points} \label{sec:3.2}

We start with a trivial observation:

\begin{lemma}\label{lem:smooth_point}
	Let $X$ be a scheme of finite type over a field $k$. If $X$ admits a smooth rational point, then $X$ is split.
\end{lemma}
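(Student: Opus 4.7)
The plan is to produce a geometrically integral open neighborhood of the given smooth rational point $x \in X(k)$; by definition of split, this suffices.

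First I would use that smoothness is an open condition: there exists an open $U \subseteq X$ with $x \in U$ on which $X$ is smooth over $k$. Since smooth morphisms over a field give regular source schemes, $U$ is regular. In a regular locally Noetherian scheme every local ring is a domain, so every point lies on a unique irreducible component, which means the irreducible components of $U$ are disjoint and hence open. Replacing $U$ by the component through $x$, we may assume $U$ is smooth \emph{and} irreducible, with $x \in U(k)$.

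Next I would base change to an algebraic closure and use Galois descent. Since $U$ is smooth over $k$, $U_{\kbar}$ is regular; in particular its irreducible components are disjoint, so they coincide with its connected components. The $k$-point $x$ lifts to a unique $\kbar$-point $\bar{x} \in U_{\kbar}$; let $C$ be the irreducible component of $U_{\kbar}$ containing $\bar{x}$. For any $\sigma \in \Gal(\kbar/k)$, the component $\sigma(C)$ contains $\sigma(\bar{x}) = \bar{x}$, and because the components are disjoint this forces $\sigma(C) = C$. Thus $C$ is a Galois-stable open and closed subscheme of $U_{\kbar}$, so by faithfully flat descent it is the base change of an open subscheme $V \subseteq U$, and $x \in V$ since $\bar{x} \in C$. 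Now $V$ is open in $X$, smooth over $k$ (as an open subscheme of $U$), and geometrically irreducible (since $V_{\kbar} = C$ is irreducible); smoothness plus geometric irreducibility gives geometric integrality. Hence $V$ is a geometrically integral open subscheme of $X$, and $X$ is split.

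There is no real obstacle here — this is why the authors call the lemma trivial — but the only step that requires a little care is the descent of the Galois-stable open $C$ to an open subscheme of $U$, which is immediate from the fact that $U_{\kbar} \to U$ is an fpqc quotient by $\Gal(\kbar/k)$ on underlying topological spaces. Everything else reduces to the standard geometric facts that smooth implies regular, that regular implies unibranch, and that a smooth geometrically irreducible scheme is geometrically integral.
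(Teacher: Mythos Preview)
Your proof is correct and follows essentially the same idea as the paper's, just spelled out in more detail. The paper's two-line argument takes a smooth, \emph{connected} open neighbourhood of the rational point and invokes the standard fact that a connected $k$-scheme of finite type with a $k$-rational point is automatically geometrically connected (hence geometrically integral, being smooth); your Galois-descent step on the irreducible components of $U_{\kbar}$ is essentially a direct proof of that fact in the smooth case.
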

\begin{proof}
	Any smooth point admits a smooth, connected open neighbourhood. If the smooth point is a \emph{rational} point, then this neighbourhood is in fact geometrically connected, and hence geometrically integral.
\end{proof}

We will also need the following elementary geometric fact.

\begin{lemma} \label{flatness}
 Let $f: X \to Y$ be a dominant morphism of finite type between integral noetherian schemes. If $Y$ is regular in codimension $1$, there exists a closed subscheme $F \subseteq Y$, of codimension at least $2$ in $Y$, such that if $U = Y \setminus F$, then the induced morphism $f^{-1}(U) \to U$ is flat. \end{lemma}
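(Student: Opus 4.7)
The plan is to show that the non-flat locus of $f$ in $X$ maps into the set of points of $Y$ of codimension at least $2$, and then use Chevalley's theorem to argue that the closure of this image also sits in codimension at least $2$.

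First I would invoke generic flatness to conclude that the non-flat locus $W \subseteq X$ is closed (with its reduced induced structure). The key local step is the claim that $f$ is automatically flat at every $x \in X$ whose image $y = f(x)$ has codimension at most $1$ in $Y$. The generic point case is trivial. If $y$ has codimension $1$, then by the hypothesis that $Y$ is regular in codimension $1$, the local ring $\mathcal{O}_{Y,y}$ is a regular local ring of dimension $1$, hence a discrete valuation ring. Since $X$ is integral and $f$ is dominant, we have an inclusion of function fields $K(Y) \hookrightarrow K(X)$, and the composition $\mathcal{O}_{Y,y} \to \mathcal{O}_{X,x} \hookrightarrow K(X)$ agrees with the composition through $K(Y)$, so $\mathcal{O}_{Y,y} \to \mathcal{O}_{X,x}$ is injective. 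Thus $\mathcal{O}_{X,x}$ is a torsion-free $\mathcal{O}_{Y,y}$-module (being a non-zero integral domain containing $\mathcal{O}_{Y,y}$), and torsion-free modules over a DVR are flat. This gives the claim, so $f(W)$ is contained in the set of points of $Y$ of codimension $\geq 2$.

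Next, since $f$ is of finite type, Chevalley's theorem implies that $f(W)$ is a constructible subset of $Y$. As $Y$ is Noetherian, the closure $\overline{f(W)}$ has finitely many irreducible components $Z_1, \ldots, Z_m$, and a constructible set contains a dense open subset of its closure, so the generic point of each $Z_i$ lies in $f(W)$. By the previous step, each such generic point has codimension $\geq 2$ in $Y$, hence each $Z_i$ has codimension $\geq 2$. Setting $F = \overline{f(W)}$ with its reduced structure then gives the desired closed subscheme: for $y \in U := Y \setminus F$ and any $x \in f^{-1}(y)$, necessarily $x \notin W$, so $f$ is flat at $x$, and hence $f^{-1}(U) \to U$ is flat.

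The main content is the DVR torsion-free argument, which is where the regularity-in-codimension-$1$ hypothesis enters; the remaining obstacle is the passage from "the image $f(W)$ sits in codimension $\geq 2$" to "its closure does too", and this is handled cleanly by the constructibility of $f(W)$ together with Noetherianity of $Y$.
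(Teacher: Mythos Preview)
Your argument is correct and is essentially an unpacking of the paper's proof, which simply cites generic flatness together with \cite[Prop.~3.9.7]{hartshorne}; the latter is precisely the statement that a morphism to a regular one-dimensional base is flat provided the associated points map to the generic point, which for $X$ integral reduces to your torsion-free-over-a-DVR observation.

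Two small remarks. First, the closedness of the non-flat locus $W \subseteq X$ is not a consequence of generic flatness but of the openness of the flat locus for morphisms locally of finite presentation (e.g.\ \cite[Thm.~11.1.1]{EGAIV}); you use this fact, so you should cite it correctly. Second, the appeal to Chevalley's theorem can be avoided: since $W$ is closed in the Noetherian scheme $X$, it has finitely many irreducible components $W_1,\ldots,W_s$, and the generic point of each $W_j$ lies in $W$, hence maps to a point of codimension $\geq 2$; but this image is exactly the generic point of the irreducible closed set $\overline{f(W_j)}$, so each $\overline{f(W_j)}$ has codimension $\geq 2$, and $\overline{f(W)} = \bigcup_j \overline{f(W_j)}$.
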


\begin{proof} Without the condition on the codimension of $F$, this is well-known (``generic flatness''). The fact that one can take $F$ to be of codimension at least $2$ if $Y$ is regular in codimension $1$ follows from \cite[Prop.~3.9.7]{hartshorne}. \end{proof}

Let us now state the main result of this section, which will be fundamental for the sieving process carried out in \S\ref{Sec:proofs}:

\begin{theorem} \label{thm:sparsity} 

Let $k$ be a number field. Let $f: X \to Y$ be a proper, dominant morphism of smooth, geometrically integral $k$-varieties. Choose a finite set of finite places $S$ for which there exist regular $\mathcal{O}_{k,S}$-models $\mathcal{X}$ and $\mathcal{Y}$ for $X$ and $Y$ respectively, and such that $f$ extends to a proper morphism $\mathcal{X} \to \mathcal{Y}$. Let $\mathcal{T}$ be any reduced divisor on $\mathcal{Y}$ which contains the locus where this morphism is not smooth. 

Enlarging $S$ if needed, one can find a closed subset $\mathcal{Z}$ of $\mathcal{T}$ containing the singular locus of $\mathcal{T}$ and of codimension $2$ in $\mathcal{Y}$, satisfying the following property.

Let $\mathfrak{p} \not\in S$ be a finite place of $k$ and choose $\mathcal{P} \in \mathcal{Y}(\mathcal{O}_\mathfrak{p})$ such that the image of the morphism $\mathcal{P}: \mathrm{Spec}\,\mathcal{O}_\mathfrak{p} \to \mathcal{Y}$ meets $\mathcal{T}$ transversally outside of $\mathcal{Z}$ and such that the fibre above $\mathcal{P}\ \mathrm{mod}\ \mathfrak{p} \in \mathcal{T}(\FF_\mathfrak{p})$ is non-split. If $P \in Y(k_\mathfrak{p})$ denotes the generic fibre of $\mathcal{P}$, then $f^{-1}(P)$ does not have any $k_\mathfrak{p}$-points.
 
\end{theorem}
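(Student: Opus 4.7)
My plan is to apply Proposition \ref{regularity} with $Z_1 = \mathcal{T}$ and with $Z_2$ the image of the section $\mathcal{P}: \Spec \mathcal{O}_\fp \to \mathcal{Y}$, in order to conclude that the pullback $\mathcal{X}_\mathcal{P} := \mathcal{X} \times_{\mathcal{Y}} \Spec \mathcal{O}_\fp$ is regular. Once this is established, the rest of the argument will combine the valuative criterion of properness with the observation that a regular total space, flat over a DVR, must have smooth special fibre at any point hit by a section; this will contradict the non-splitness of the special fibre via Lemma \ref{lem:smooth_point}.

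To construct $\mathcal{Z}$, I would first enlarge $S$ so that (i) by Lemma \ref{flatness}, the non-flat locus $F$ of $f: \mathcal{X} \to \mathcal{Y}$ has codimension at least $2$ in $\mathcal{Y}$, and (ii) every irreducible component of $\mathcal{T}$ is horizontal over $\Spec \mathcal{O}_{k,S}$ and therefore has a characteristic-zero generic point. Applying Proposition \ref{submersivity} to $f^{-1}(T_i) \to T_i$ for each irreducible component $T_i$ of $\mathcal{T}$ then yields a dense open of $\mathcal{T}$ over which this restricted morphism is submersive. I would then define $\mathcal{Z}$ as the union of the singular locus of $\mathcal{T}$, the intersection $F \cap \mathcal{T}$, and the non-submersive locus in $\mathcal{T}$; each of these three pieces has codimension at least $1$ in $\mathcal{T}$, so $\mathcal{Z}$ has codimension at least $2$ in $\mathcal{Y}$ as required.

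Given $\mathcal{P}$ as in the statement, the image of the section is a regular closed subscheme of $\mathcal{Y}$ isomorphic to $\Spec \mathcal{O}_\fp$, and $\mathcal{T} \setminus \mathcal{Z}$ is regular by construction. The hypothesised transversality of $\mathcal{P}$ with $\mathcal{T}$ outside $\mathcal{Z}$, the smoothness of $f$ away from $\mathcal{T}$, the submersivity of $f^{-1}(\mathcal{T} \setminus \mathcal{Z}) \to \mathcal{T} \setminus \mathcal{Z}$, and the flatness of $f$ at all points above $\mathcal{P} \bmod \fp$ (since this point lies outside $F$) together verify the hypotheses of Proposition \ref{regularity} in a neighbourhood of the image of $\mathcal{P}$. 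Hence $\mathcal{X}_\mathcal{P}$ is regular.

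Finally, suppose for contradiction that there exists $\tilde{x} \in f^{-1}(P)(k_\fp)$. Since $\mathcal{X}_\mathcal{P} \to \Spec \mathcal{O}_\fp$ is proper by base change from $f$, the valuative criterion extends $\tilde{x}$ to a section $\sigma: \Spec \mathcal{O}_\fp \to \mathcal{X}_\mathcal{P}$; let $\bar{x}$ be the image under $\sigma$ of the closed point, which lies in the special fibre $f^{-1}(\mathcal{P} \bmod \fp)$. The local ring of $\mathcal{X}_\mathcal{P}$ at $\bar{x}$ is regular, and $\mathcal{X}_\mathcal{P} \to \Spec \mathcal{O}_\fp$ is flat at $\bar{x}$ by the construction of $\mathcal{Z}$, so a uniformiser of $\mathcal{O}_\fp$ forms part of a regular system of parameters and the special fibre $f^{-1}(\mathcal{P} \bmod \fp)$ is regular at $\bar{x}$. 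As $\FF_\fp$ is perfect, $\bar{x}$ is in fact a smooth $\FF_\fp$-rational point of $f^{-1}(\mathcal{P} \bmod \fp)$, so by Lemma \ref{lem:smooth_point} this fibre is split, contradicting the assumption. The main obstacle I anticipate is the bookkeeping in the definition of $\mathcal{Z}$: one must enlarge $S$ carefully so that Proposition \ref{submersivity}, which requires a characteristic-zero generic point, applies to every irreducible component of $\mathcal{T}$ relevant to the argument.
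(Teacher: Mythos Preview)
Your proposal is correct and follows essentially the same approach as the paper: construct $\mathcal{Z}$ as the union of the non-flat locus, the non-submersive locus, and the singular locus of $\mathcal{T}$ (after enlarging $S$ to ensure the components of $\mathcal{T}$ are horizontal so that Proposition~\ref{submersivity} applies); then apply Proposition~\ref{regularity} to deduce that $\mathcal{X} \times_{\mathcal{Y}} \Spec \mathcal{O}_\fp$ is regular, and finish with properness plus Lemma~\ref{lem:smooth_point}. The only cosmetic differences are that the paper cites \cite[Prop.~3.1.2]{BLR90} for the fact that a section through a regular total space must meet the special fibre in its smooth locus, whereas you unpack this directly via regular systems of parameters, and that the paper arranges flatness by first working over $k$ and spreading out rather than applying Lemma~\ref{flatness} directly to the integral model.
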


As an example, take $Y = \mathbb{P}^1_k$ and let $P \in \mathbb{P}^1_k(k_\mathfrak{p}) \setminus \{\infty\}$ for a finite place $\mathfrak{p}$ which is sufficiently large, with coordinates $(t_P : 1)$ for some $t_P \in \mathcal{O}_\mathfrak{p}$. Assume  that the fibre above  $\mathcal{P}\ \mathrm{mod}\ \mathfrak{p} \in \mathbb{P}^1_{\mathcal{O}_k}(\FF_\mathfrak{p})$ is non-split. If $t_Q \in \mathcal{O}_\fp$ satisfies $v_\mathfrak{p}(t_P - t_Q) = 1$ then the fibre above the point $Q \in \mathbb{P}^1_k(k_\mathfrak{p})$ with coordinates $(t_Q : 1)$ does not have a $k_\mathfrak{p}$-point. In particular, in the special case of conic bundles over $\mathbb{P}^1_k$ given by (\ref{serre}), we recover Serre's observation used in \cite{Ser90}.

We now prove the theorem:

\begin{proof} Let $\mathcal{T}_1,\mathcal{T}_2,\ldots,\mathcal{T}_r$ be the irreducible components of $\mathcal{T}$. Enlarging $S$ if necessary, we can assume that each $T_i = \mathcal{T}_i \otimes_{\OO_{k,S}} k$ is non-empty and hence a divisor on $Y$. Then by Lemma \ref{flatness}, each $T_i$ contains a strict closed subset $F_i$ such that if $F = \bigcup_{i = 1}^r F_i$, then $f$ is flat over $Y \setminus F$. Denoting by $\mathcal{F}$ (resp.~$\mathcal{F}_i$) the closure of $F$ (resp.~$F_i$) in $\mathcal{T}$ (resp.~$\mathcal{T}_i$) and enlarging $S$ if necessary, we may assume that $\mathcal{X} \to \mathcal{Y}$ is flat away from the codimension $2$ subset $\mathcal{F}$.

For each $i = 1,2,\ldots, r$, there exists a strict closed subset $\mathcal{S}_i \subseteq \mathcal{T}_i$ such that $f|_{\mathcal{T}_i}$ is submersive over $\mathcal{T}_i \setminus \mathcal{S}_i$, by Proposition \ref{submersivity}. Enlarging $\mathcal{S}_i$ if necessary, we may assume that $\mathcal{T}_i \setminus \mathcal{S}_i$ is regular. Enlarging $S$ if necessary, we may now assume that the induced morphism $\mathcal{X} \times_{\mathcal{Y}} (\mathcal{T} \setminus \mathcal{S}) \to \mathcal{T} \setminus \mathcal{S}$ is submersive, where $\mathcal{S} := \bigcup_{i = 1}^r \mathcal{S}_i$ is a closed subset of codimension at least two in $\mathcal{Y}$.

We now take $\mathcal{Z} = \mathcal{F} \cup \mathcal{S}$ to be the required subset of codimension $2$ in $\mathcal{Y}$, and choose $\mathcal{P} \in \mathcal{Y}(\mathcal{O}_\mathfrak{p})$ as in the statement of the theorem. Proposition \ref{regularity} implies that the fibre product $\mathcal{X} \times_{\mathcal{Y}} \mathcal{P}$ is a regular $\mathcal{O}_\mathfrak{p}$-scheme. If its generic fibre $f^{-1}(P)$ were to have a $k_\mathfrak{p}$-point, then $\mathcal{X} \times_{\mathcal{Y}} \mathcal{P}$ would have an $\mathcal{O}_\mathfrak{p}$-point by properness, which then has to specialise to a rational point in the smooth locus of its special fibre; this observation crucially uses the regularity of $\mathcal{X} \times_{\mathcal{Y}} \mathcal{P}$, cf.~\cite[Prop.~3.1.2]{BLR90}. By Lemma \ref{lem:smooth_point}, this would imply that the fibre above $\mathcal{P}\ \mathrm{mod}\ \mathfrak{p}$ is split, contradicting our assumptions. This finishes the proof. \end{proof}
\section{Splitting densities} \label{Sec:split}

The aim of this section is to study the fundamental properties of the quantity $\Delta(\pi)$ defined in \eqref{def:Delta}. We will show that it is a well-defined birational invariant of the generic fibre (in a sense we will make precise in Lemma \ref{lem:delta_birational}) and give a closed formula which calculates its value (Proposition \ref{prop:Delta}). 
\subsection{Frobenian sets}
We first recall the notion of frobenian sets
on arithmetic schemes, following Serre's treatment in \cite[\S9.3]{SerNXp}.

Let $\mathcal{Y}$ be a flat integral scheme of finite type over $\ZZ$ of dimension $n + 1$, so that the generic fibre $Y = \mathcal{Y} \otimes_{\mathbb{Z}} \mathbb{Q}$ has dimension $n$. Recall that we denote
the set of closed points of $\mathcal{Y}$ by $\underline{\mathcal{Y}}$. The \emph{norm} of closed point $y \in \underline{\mathcal{Y}}$ is defined
to be $\Norm(y) = \#\kappa(y)$. The \emph{degree} $\deg(y)$ of a closed point $y$ is the degree
of its residue field $\kappa(y)$ over its prime subfield. Let $K$ be the function field of $\mathcal{Y}$. 

\begin{definition} \label{def:frob}
A \emph{frobenian set} is a subset $F \subset \underline{\mathcal{Y}}$ with the following properties. There exist a non-Zariski-dense subset $S \subset \underline{\mathcal{Y}}$, a finite Galois extension $L/K$ with Galois group $\Gamma$ and a subset $C \subset \Gamma$ such that:
\begin{enumerate}
	\item $C$ is stable under conjugation;
	\label{item:conjugation}
	\item the normalisation of $\mathcal{Y}$ in $L$ is finite 
	\'{e}tale over the points in $\underline{\mathcal{Y}} \setminus S$;
	\label{item:etale} %Note that the normalisation is finite as our scheme is excellent!
	\item for all $y \in \underline{\mathcal{Y}} \setminus S$, we have $y \in F$ if and only if $\Frob_y \in C$.
	\label{item:Frob}
\end{enumerate}
Here $\Frob_y \in \Gamma$ denotes the Frobenius element at $y \in \underline{\mathcal{Y}}$, cf.~\cite[9.3.1.3]{SerNXp}.
We define the \emph{mean} of $F$ to be $m(F) = |C|/|\Gamma|$ (this is easily seen to be independent of the choice of $\Gamma$ and $C$).
\end{definition}

\begin{example} Let $E/K$ be a finite extension of number fields. Then the set of all prime ideals of $\OO_K$ which split completely in $E$ is a frobenian set. 

To see this, in the notation of Definition \ref{def:frob} one takes $\mathcal{Y} = \OO_K$, the field $L$ to be the Galois closure of $E/K$, the set $C$ to consist of the identity element of $\Gamma$, and $S$ the set of those primes of $K$ which ramify in $L/K$.
\end{example}

A generalisation \cite[Thm.~9.1]{SerNXp} of the prime number theorem to higher-dimensional arithmetic schemes states that
\begin{equation} \label{eqn:prime_number_theorem}
	\#\{ y \in \underline{\mathcal{Y}} : \Norm(y) \leq B\} \sim \Li(B^{n+1}), \quad \mbox{as } B \to \infty,
\end{equation}
where $$\Li(x) = \int_{2}^x \frac{dt}{\log t}.$$
Hence we define the \emph{density} of a subset $F \subset \underline{\mathcal{Y}}$ to be
$$\dens(F) = \lim_{B \to \infty} \frac{\#\{y \in F: \Norm(y) \leq B\}}{\Li(B^{n+1})},$$
if the limit exists. The set of closed points of $\mathcal{Y}$ of degree greater than $1$ satisfies \cite[Lem.~9.3]{SerNXp}
\begin{equation} \label{eqn:large_degree}
	\#\{ y \in \underline{\mathcal{Y}} : \Norm(y) \leq B, \, \deg y > 1\} \ll B^{n + 1/2},
\end{equation}
in particular, it has density zero. We also require the following weak version of the Lang--Weil estimate:
\begin{equation} \label{eqn:LW}
 \mathcal{Y}(\FF_\fp) \ll \Norm(\fp)^n.
\end{equation} A generalisation of the Chebotarev density theorem (see \cite[Thm.~9.11]{SerNXp}) says that if $F \subset \underline{\mathcal{Y}}$ is a frobenian set, then the density $\dens(F)$ exists and
\begin{equation} \label{eqn:Cheb}
	\dens(F) = m(F),
\end{equation}
where $m(F)$ is the mean as defined in Definition \ref{def:frob}.
This implies that a frobenian set has positive density if and only if
it is Zariski dense (see \cite[\S 9.3.2, \S 9.3.3]{SerNXp} for details).

We require a strengthening of \eqref{eqn:Cheb}. The following lemma  is probably known to experts; we give a proof for completeness.

\begin{lemma} \label{lem:frob_zeta}
	Let $n=\dim Y$, let $F \subset \underline{\mathcal{Y}}$ be a frobenian set and let
	$$\zeta_F(s) = \prod_{y \in F}\left( 1 - \frac{1}{\Norm(y)^s}\right)^{-1}, \quad \re s > n+1.$$
	Then $\zeta_F(s)$
	admits a holomorphic continuation to the half-plane $\re s \geq n+1$, apart from possibly at the point $s=n+1$,
	where we have
	$$\zeta_F(s) \sim \frac{c_F}{(s-n-1)^{m(F)}} \quad \mbox{for some } c_F \neq 0, \quad \mbox{as } s \to n+1.$$
\end{lemma}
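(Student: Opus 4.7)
The plan is to express $\log \zeta_F(s)$ as a linear combination of logarithms of Artin-type $L$-functions on the arithmetic scheme $\mathcal{Y}$, and then invoke the analytic properties of these $L$-functions established in \cite[\S 9]{SerNXp}. The coefficient of the logarithm of the ``trivial'' $L$-function will turn out to be exactly $m(F)$, which forces the claimed order of the pole at $s = n+1$.

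First, I would reduce to closed points of degree $1$ not lying in $S$. Using the bound \eqref{eqn:large_degree}, the contribution of closed points of degree at least $2$ to $\log \zeta_F(s)$ converges absolutely and defines a holomorphic function for $\re s > n+\tfrac{1}{2}$; similarly, for any $y$ of degree $1$, the tail $\sum_{m\geq 2} N(y)^{-ms}/m$ summed over $y$ converges absolutely for $\re s > \tfrac{n+1}{2}$ by \eqref{eqn:LW}. The set $S$ is non-Zariski-dense in $\mathcal{Y}$, so a similar (even easier) application of \eqref{eqn:prime_number_theorem} shows that removing points lying over $S$ changes $\log \zeta_F(s)$ by a function which is holomorphic in a neighbourhood of $\re s \geq n+1$. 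Hence modulo a holomorphic error on $\re s \geq n+1$,
\[
\log \zeta_F(s) \;=\; \sum_{\substack{y \in \underline{\mathcal{Y}} \setminus S \\ \deg y = 1,\, \Frob_y \in C}} N(y)^{-s}.
\]

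Next, since $C$ is stable under conjugation, the indicator $\one_C$ is a class function on $\Gamma$, and so decomposes as $\one_C = \sum_{\chi} a_\chi \chi$, where $\chi$ ranges over the irreducible characters of $\Gamma$. The coefficient of the trivial character is $a_{\one} = \langle \one_C, \one \rangle = |C|/|\Gamma| = m(F)$. Applying this to $\gamma = \Frob_y$ gives
\[
\log \zeta_F(s) \;=\; \sum_{\chi} a_\chi \sum_{\substack{y \in \underline{\mathcal{Y}} \setminus S \\ \deg y = 1}} \chi(\Frob_y)\, N(y)^{-s} \;+\; (\text{holomorphic on } \re s \geq n+1).
\]
For each irreducible character $\chi$ of $\Gamma$, I would introduce the Artin-type $L$-function
\[
L(s,\chi) \;=\; \prod_{y \in \underline{\mathcal{Y}} \setminus S} \det\!\bigl(1 - \rho(\Frob_y) N(y)^{-s}\bigr)^{-1},
\]
where $\rho$ is a representation with character $\chi$. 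By the same degree-reduction as above, $\log L(s,\chi)$ agrees with the corresponding Dirichlet series over degree-one points up to a function holomorphic on $\re s \geq n+1$.

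Finally, I would invoke the analytic input from Serre: by \cite[\S 9]{SerNXp} (the same results used to prove \eqref{eqn:Cheb}), $L(s,\one) = \zeta_{\mathcal{Y}}(s)$ up to finitely many Euler factors, which admits holomorphic continuation to a neighbourhood of $\re s \geq n+1$ except for a simple pole at $s = n+1$; while for every non-trivial irreducible $\chi$, $L(s,\chi)$ admits holomorphic continuation to a neighbourhood of $\re s \geq n+1$ and is non-vanishing there (this step is where Brauer induction and the corresponding results for abelian $L$-functions enter, all packaged in \cite[\S 9]{SerNXp}). Combining this with the decomposition above yields
\[
\log \zeta_F(s) \;=\; m(F)\, \log \zeta_{\mathcal{Y}}(s) \;+\; g(s),
\]
with $g$ holomorphic on $\re s \geq n+1$. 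Exponentiating and using $\zeta_{\mathcal{Y}}(s) \sim c/(s-n-1)$ as $s \to n+1$ gives the claim with $c_F = c^{m(F)} e^{g(n+1)} \neq 0$.

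The main obstacle is the required analytic input on the Artin-type $L$-functions $L(s,\chi)$ — specifically their holomorphy and non-vanishing on $\re s \geq n+1$ for non-trivial $\chi$, and the simple pole at $s=n+1$ for the trivial character. In the present setting of arithmetic schemes this is exactly what Serre establishes in \cite[\S 9]{SerNXp} en route to \eqref{eqn:Cheb}, so the proof is essentially a bookkeeping exercise on top of those results; the only care needed is to track that the finitely many ``bad'' Euler factors coming from $S$ and from higher-degree points contribute only a holomorphic nuisance on $\re s \geq n+1$.
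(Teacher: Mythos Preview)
Your proposal is correct and follows essentially the same route as the paper: decompose $\one_C$ into irreducible characters of $\Gamma$, identify the coefficient of the trivial character as $m(F)$, rewrite $\log\zeta_F(s)$ in terms of $\log L(s,\chi)$, and invoke the holomorphy/non-vanishing of the non-trivial Artin $L$-functions on $\re s\geq n+1$ together with the simple pole of the trivial one. The only substantive difference is bibliographic: the paper cites \cite[\S2.6]{Ser65} and \cite[\S2.1]{Fal84} for the analytic continuation and non-vanishing of the Artin $L$-functions on arithmetic schemes, rather than \cite[\S9]{SerNXp}; you should check that the latter reference actually contains (or directly implies) the required input rather than just the density statement \eqref{eqn:Cheb}.
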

\begin{proof}
	We recall that the usual zeta function of $\mathcal{Y}$ is defined to be
	$$\zeta_\mathcal{Y}(s) = \prod_{y \in \underline{\mathcal{Y}}}\left( 1 - \frac{1}{\Norm(y)^s}\right)^{-1},$$
	and that this is holomorphic and absolutely convergent on $\re s > n+1$, cf. \cite[\S9.1.7]{Ser65}. That $\zeta_F(s)$ is
	holomorphic on $\re s > n+1$ is thus clear.
	
	Choose $C$, $\Gamma$ and $S$ as in Definition \ref{def:frob}.
	Let $\id_C:\Gamma \to \{0,1\}$ be the indicator function of $C$. As this is invariant under conjugation,
	it may be written as a sum $\id_C = \sum_{\chi} \lambda_\chi \chi$
	over the set $\Irr(\Gamma)$ of irreducible characters of $\Gamma$,
	for some $\lambda_\chi \in \CC$ (see \cite[Prop.~2.30]{FH91}).
	It follows that
	\begin{align}
		\zeta_F(s) &  = \prod_{y \in \underline{\mathcal{Y}} \setminus S}
		\left(1 - \frac{\sum_{\chi} \lambda_{\chi} \chi(\Frob_y)}{\Norm(y)^s}\right)^{-1}
		\prod_{y \in F \cap S}\left(1 - \frac{1}{\Norm(y)^s}\right)^{-1}. \label{eqn:Euler1}
	\end{align}
	As $S$ is not Zariski dense, the second Euler product is holomorphic on $\re s > n$.
	For the first one, we appeal to the theory of Artin $L$-functions on arithmetic schemes
	(see \cite[\S2.2]{Ser65}). For an irreducible character $\chi$ of $\Gamma$, we let 
	$$L_S(\chi,s) = \prod_{y \in \underline{\mathcal{Y}} \setminus S}
	\det\left(1 - \frac{\rho_\chi(\Frob_y)}{\Norm(y)^s}\right)^{-1}, \quad \re(s) > n +1,$$
	be the associated Artin $L$-function,
	where $\rho_\chi$ is the irreducible representation corresponding to $\chi$.
	Using the power series expansion of $\log$, we obtain
	\begin{equation} \label{eqn:Artin_log}
		\log L_S(\chi,s) = \sum_{y \in \underline{\mathcal{Y}} \setminus S} \frac{\chi(\Frob_y)}{\Norm(y)^s} 
		+ g(s),
		\quad \re(s) > n +1,
	\end{equation}
	where $g(s)$ is holomorphic on $\re(s) > n + 1/2$.	
		
	If $\chi$ is not the trivial character $\one$, then $L_S(\chi,s)$ is holomorphic without zeros on 
	the half-plane $\re s \geq n+1$, whereas  $L_S(\one,s)$ admits a meromorphic continuation to $\re s \geq n+1$
	without zeros and a single pole of order $1$ at $s = n+1$.
	When $n=0$, this is a classical result for the usual Artin $L$-functions of number fields.
	For general $n$, details can be found in \cite[\S2.6]{Ser65}
	and \cite[\S2.1]{Fal84}. 
	
	Returning to \eqref{eqn:Euler1}, for $\re s > n +1$ we have
	\begin{align*}
		\log \prod_{y \in \underline{\mathcal{Y}} \setminus S}
		\left(1 - \frac{\sum_{\chi} \lambda_\chi \chi(\Frob_y)}{\Norm(y)^s}\right)^{-1} 
		 & = \sum_{\chi \in \Irr(\Gamma)} \lambda_\chi \sum_{y \in \underline{\mathcal{Y}} \setminus S} 
		 \frac{\chi(\Frob_y)}{\Norm(y)^s} + h(s),
	\end{align*}
	where $h(s)$ is holomorphic on $\re s > n + 1/2$. Comparing with \eqref{eqn:Artin_log}
	and exponentiating, we obtain
	\begin{equation} \label{eqn:analytic_cont}
		\zeta_F(s) = G(s) \prod_{\mathclap{\chi \in \Irr(\Gamma)}} L_S(\chi,s)^{\lambda_\chi} , \quad \re s > n + 1,
	\end{equation}	
	where $G(s)$ is holomorphic and non-zero on $\re s > n + 1/2$. The right-hand side of \eqref{eqn:analytic_cont}
	admits a holomorphic continuation to the half-plane $\re s \geq n + 1$, except for a possible branch point
	singularity at $s = n + 1$ coming from the trivial character $\one$.
	The result then follows from the fact that $\lambda_\one = m(F)$.
\end{proof}

\subsection{$\delta$-invariants} \label{sec:delta}
We now study the $\delta$-invariants given by \eqref{def:delta_D}.  
Let $k$ be a perfect field and let $X$ be a scheme of finite type over $k$. Denote by $I$ the set of geometric irreducible components of multiplicity $1$, i.e. irreducible components of $X \otimes_k \overline{k}$ of multiplicity $1$.

The Galois group $\Gal(\overline{k}/k)$ acts on $I$ in a natural way. One can construct an explicit finite group $\Gamma$ through which this action factors, as follows. Denote by $K_1,\ldots,K_r$ the algebraic closures of $k$ inside the function fields of the irreducible components of $X$ over $k$ of multiplicity $1$. Then $\Gamma$ may be taken to be the Galois group of the Galois closure of the compositum of these finite extensions, and $I$ can be identified with  the $\Gamma$-set corresponding to the \'{e}tale $k$-algebra $\prod_{i = 1}^r K_i$; we refer to \cite[Prop.~10.12, p.~A.V.76]{Bou81} for the usual correspondence between $\Gamma$-sets and \'{e}tale $k$-algebras. If $I \neq \emptyset$, we define
\begin{equation} \label{def:delta}
\delta(X) := \frac{\#\{\gamma \in \Gamma : \gamma \mbox{ acts with a fixed point on } I\}}{\# \Gamma}.
\end{equation}
Note that this definition is independent of the choice of $\Gamma$. If $I = \emptyset$, we instead define $\delta(X) = 0$. The quantity $\delta(X)$ measures ``how non-split'' $X$ is. For many arithmetic applications, schemes with $\delta(X) =1$ will turn out to be as good as split schemes.
We first gather some basic properties of these invariants.

\begin{lemma}\label{lem:delta=1}
	Suppose that $\delta(X) \neq 0$.
	Then we have
	$$\frac{1}{\# \Gamma} \leq \delta(X) \leq 1,$$ with $\Gamma$ as constructed above. If $X$ is split, then $\delta(X) = 1$.
\end{lemma}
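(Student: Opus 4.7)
The three claims each follow from elementary observations about the $\Gamma$-action on $I$, so the plan is to verify them in turn. The upper bound $\delta(X) \leq 1$ is immediate from \eqref{def:delta}, since the numerator is the cardinality of a subset of $\Gamma$ and so is at most $\#\Gamma$.

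For the lower bound, I would first note that the assumption $\delta(X) \neq 0$ forces $I$ to be non-empty, since by convention we set $\delta(X) = 0$ when $I = \emptyset$. The identity element $e \in \Gamma$ then acts trivially on $I$ and hence fixes every one of its elements, so $e$ lies in the subset of $\Gamma$ counted by the numerator of \eqref{def:delta}. This gives a single guaranteed contribution, whence $\delta(X) \geq 1/\#\Gamma$.

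For the split case, the key idea is to produce a single $\Gamma$-fixed element of $I$ arising from a $k$-rational, geometrically integral component of $X$. By hypothesis $X$ contains a geometrically integral open subscheme $U$; I would take $Z$ to be the Zariski closure of $U$ in $X$, which is an irreducible component of $X$. Density of $U$ inside $Z$ together with the geometric reducedness and geometric irreducibility of $U$ forces $Z$ itself to be geometrically integral, and in particular generically reduced, i.e.\ of multiplicity one. Since $k$ is perfect and $Z$ is geometrically integral, $k$ is algebraically closed in the function field $k(Z)$, so the field $K_i$ attached to $Z$ in the construction of $\Gamma$ equals $k$. The $\Gamma$-orbit in $I$ coming from this factor of the \'etale algebra $\prod_j K_j$ is then a singleton, which is fixed by every element of $\Gamma$. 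Consequently every $\gamma \in \Gamma$ acts with a fixed point on $I$, giving $\delta(X) = 1$. No step here presents a genuine obstacle; the only point requiring care is the standard equivalence, over a perfect field, between geometric integrality of $Z$ and $k$ being algebraically closed in $k(Z)$, which is precisely what makes the relevant $\Gamma$-orbit collapse to a fixed point.
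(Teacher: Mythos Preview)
Your proof is correct and follows essentially the same approach as the paper's own proof, which simply declares the first part trivial and for the second part notes that the algebraic closure of $k$ in the function field of a geometrically integral component equals $k$ itself. You have spelled out more detail (why the closure $Z$ of $U$ has multiplicity one and why the corresponding factor of the \'etale algebra gives a $\Gamma$-fixed point of $I$), but the underlying argument is identical.
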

\begin{proof}
	The first part is trivial.	The second part follows from the observation that if $X$ is split then
	every element of $\Gamma$ acts with a fixed point on $I$. Indeed, the algebraic closure of $k$
	inside the function field of a geometrically integral component is equal to $k$ itself.
\end{proof}

\begin{remark}
	If $\Gamma$ is cyclic, then clearly $\delta(X) = 1$ implies that $X$ is split.
	If $\Gamma$ is non-cyclic, however, then this implication fails in general.
	Indeed, let $K/k$ be a Galois extension with Galois group $\Gamma$ (if one exists). Then the scheme
	$$X:= \bigsqcup_{k \subsetneq  L \subset K} \Spec L$$
	satisfies $\delta(X) = 1$, but is non-split.
\end{remark}

One cannot expect ``simple'' formulae for $\delta(X)$ in general. However,
we have:

\begin{lemma} \label{lem:Galois}
	If $X$ is irreducible but not geometrically integral, then $\delta(X) < 1$.
	If $X$ is integral and the algebraic closure $K$ of $k$ in $k(X)$ is Galois, then
	$$\delta(X) = \frac{1}{[K:k]}.$$
\end{lemma}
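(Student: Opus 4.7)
The plan is to translate both statements into questions about a finite group acting on a finite set and then apply an elementary orbit-counting argument.

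First I would describe $I$ and $\Gamma$ explicitly under the hypothesis that $X$ is irreducible. Let $K$ be the algebraic closure of $k$ in $k(X_{\mathrm{red}})$; since $k$ is perfect, $K/k$ is finite separable. A standard computation shows that the irreducible components of $X_{\bar k}$ correspond bijectively to $\mathrm{Hom}_k(K,\bar k)$, and that they all have the same multiplicity, equal to $\mathrm{length}(\mathcal{O}_{X,\eta})$ where $\eta$ is the generic point of $X$. Thus either $X$ is generically non-reduced, in which case $I = \emptyset$, or $X$ is integral and $I$ is identified with $\mathrm{Hom}_k(K,\bar k)$. In the latter case one may take $\Gamma = \Gal(L/k)$ with $L$ the Galois closure of $K$, and $\Gamma$ acts transitively on $I$.

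For the first part, assume $X$ is irreducible but not geometrically integral. If $I = \emptyset$ then $\delta(X) = 0 < 1$ by definition, so I may assume $X$ is integral, in which case $K \neq k$ (this is the geometric non-integrality) and $|I| = [K:k] \geq 2$. The key step is the following classical fact: a transitive action of a finite group $\Gamma$ on a set $I$ of size at least two must have some element acting without a fixed point. This is immediate from Burnside's counting formula: if every $\gamma$ had a fixed point, then
$$|\Gamma| \;=\; \sum_{\gamma \in \Gamma} |\mathrm{Fix}(\gamma,I)| \;\geq\; |I| + (|\Gamma|-1),$$
forcing $|I| \leq 1$, a contradiction. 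Hence $\delta(X) < 1$.

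For the second part, if $X$ is integral and $K/k$ is already Galois, the Galois closure of $K$ is $K$ itself, so I may take $\Gamma = \Gal(K/k)$. Then $I = \mathrm{Hom}_k(K,\bar k)$ is a $\Gamma$-torsor under post-composition, i.e.\ $\Gamma$ acts simply transitively on $I$. Consequently the identity is the unique element of $\Gamma$ admitting a fixed point on $I$, and $\delta(X) = 1/|\Gamma| = 1/[K:k]$ follows at once from \eqref{def:delta}. There is essentially no obstacle here: the only substantive point is the Burnside application in the first part, and everything else is bookkeeping.
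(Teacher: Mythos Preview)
Your proof is correct and follows essentially the same approach as the paper's: reduce to a transitive group action on a set of size at least two and invoke the fact (Jordan's theorem) that such an action has a fixed-point-free element, then note that in the Galois case the action is free. The only difference is that the paper cites Jordan's theorem via \cite{Ser03}, whereas you supply the standard Burnside/orbit-counting proof of it directly.
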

\begin{proof}
	If $\delta(X) = 0$ then the first part is clear. If $\delta(X) \neq 0$ then the result
	follows from a theorem of Jordan \cite[Thm.~4]{Ser03}: for a group $\Gamma$ acting
	\emph{transitively} on a finite set $I$ with  $\#I \geq 2$, there is some $\gamma \in \Gamma$ which
	acts without a fixed point.
	The second statement is clear, as in this case $\Gamma$ also acts \emph{freely} on $I$.
\end{proof}
 
\subsection{$\delta$-invariants in families} \label{sec:delta_families}
Given a morphism of schemes $\pi:X\to Y$ of finite type and a (not necessarily closed) point
 $y \in Y$ with $\kappa(y)$ perfect, define
\begin{equation} \label{def:delta_family}
\delta_y(\pi) := \delta(\pi^{-1}(y)).
\end{equation}
We denote by $\Gamma_y(\pi)$ and $I_y(\pi)$ the associated Galois group and $\Gamma_y(\pi)$-set, as defined in the previous section. If the generic fibre
of $\pi$ is split, then there exists a dense open subset over which each fibre is split. This follows from the constructible
nature of geometric integrality \cite[Thm.~9.7.7]{EGAIV}. In the case where the generic fibre is non-split, we now study the extent to which this
non-splitness spreads out in the arithmetic setting.

\begin{proposition} \label{prop:is_frob}
	Let $\mathcal{X}$ and $\mathcal{Y}$ be schemes which are of finite type and flat over $\ZZ$, with $\mathcal{Y}$ integral,
	and let $\pi:\mathcal{X} \to \mathcal{Y}$ be a dominant morphism. Let $\eta$ denote the generic point of $\mathcal{Y}$.
	Then the set
	\begin{equation} \label{def:frob_set}
		\{y \in \underline{\mathcal{Y}}: \pi^{-1}(y) \mbox{ is non-split}\}
	\end{equation}
	is frobenian with density $1 - \delta_\eta(\pi)$.
\end{proposition}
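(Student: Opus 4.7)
The plan is to realise the set \eqref{def:frob_set} as the Frobenius preimage of a conjugation-invariant subset of $\Gamma := \Gamma_\eta(\pi)$, namely
\[ C := \{ \gamma \in \Gamma : \gamma \text{ acts without fixed points on } I \}, \qquad I := I_\eta(\pi). \]
By construction $|C|/|\Gamma| = 1 - \delta_\eta(\pi)$, so once I verify that the set is frobenian in the sense of Definition \ref{def:frob} with splitting field $L$ and class $C$, the density claim will follow from \eqref{eqn:Cheb}.

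First I would construct the auxiliary Galois cover. Letting $K = \kappa(\eta)$, I take $L/K$ to be the Galois extension provided by the construction in \S\ref{sec:delta}, so that $\Gamma = \Gal(L/K)$ acts on $I$ in the prescribed way. Let $\mathcal{Y}'$ be the normalisation of $\mathcal{Y}$ in $L$. Outside a non-Zariski-dense subset $S_0 \subset \underline{\mathcal{Y}}$, the morphism $\mathcal{Y}' \to \mathcal{Y}$ is finite \'etale, so the Frobenius elements $\Frob_y \in \Gamma$ (up to conjugation) are defined for $y \in \underline{\mathcal{Y}} \setminus S_0$.

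The key geometric step is to spread out the irreducible component structure of the generic fibre. Enlarging $S_0$ to a still non-Zariski-dense subset $S$, I would arrange simultaneously that on $\mathcal{Y} \setminus S$ the morphism $\pi$ is flat, each multiplicity-one component of $\mathcal{X}_\eta$ (after base change to $L$) extends to a relative subscheme of $\mathcal{X}_{\mathcal{Y}'}$ whose fibres remain geometrically integral of the expected dimension, and no new multiplicity-one components appear in special fibres. Extending the components while preserving geometric integrality uses constructibility of geometric integrality \cite[Thm.~9.7.7]{EGAIV}; control of multiplicities under specialisation uses flatness of $\pi$. After these reductions, for each closed $y \in \underline{\mathcal{Y}} \setminus S$ the geometric multiplicity-one components of $\pi^{-1}(y)$ are in $\Gal(\overline{\kappa(y)}/\kappa(y))$-equivariant bijection with $I$, the action factoring through $\Frob_y \in \Gamma$.

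The conclusion is then formal. A geometrically integral multiplicity-one component of $\pi^{-1}(y)$ witnesses splitness (its smooth locus is a geometrically integral open subscheme), and via the spread-out family it descends from a singleton $\Frob_y$-orbit on $I$; conversely, any $\Frob_y$-fixed point of $I$ yields such a geometrically integral component of $\pi^{-1}(y)$. Hence $\pi^{-1}(y)$ is split iff $\Frob_y$ has a fixed point on $I$, iff $\Frob_y \notin C$. This identifies \eqref{def:frob_set} with $\{y : \Frob_y \in C\}$ away from $S$, so it is frobenian with mean $|C|/|\Gamma| = 1 - \delta_\eta(\pi)$, and \eqref{eqn:Cheb} delivers the density.

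I expect the spreading-out step to be the main obstacle. One has to control two properties along specialisation at once: geometric integrality of the relative components corresponding to $I$, and the non-appearance of spurious multiplicity-one components in the special fibres. Either failure would destroy the bijection with $I$ and hence the group-theoretic description. Once this geometric bookkeeping is in place, the transition to the frobenian framework of \S3.1 and the invocation of Chebotarev are essentially mechanical.
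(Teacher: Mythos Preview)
Your strategy matches the paper's: realise \eqref{def:frob_set} as the Frobenius preimage of the fixed-point-free elements of $\Gamma$ acting on $I$, then invoke \eqref{eqn:Cheb}. The difference lies in how the spreading-out is carried out. The paper first disposes of the case $I = \emptyset$ separately (via \cite[Lem.~9.7.2]{EGAIV}), removes the higher-multiplicity components of $\mathcal{X}_\eta$, shrinks $\mathcal{Y}$ so that all fibres are geometrically reduced, and then invokes Romagny's functor $\Irr_{\mathcal{X}/\mathcal{Y}}$ of open irreducible components \cite{Rom11}: after further shrinking this is represented by a finite \'etale $\mathcal{Y}$-scheme whose generic fibre is the $\Gamma$-set $I$, and then ``$\pi^{-1}(y)$ split $\iff \Irr_{\mathcal{X}/\mathcal{Y}}(\kappa(y)) \neq \emptyset \iff \Frob_y$ fixes a point of $I$'' is immediate from Galois theory for finite \'etale covers. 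This packages your three bookkeeping requirements into a single representability statement.

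Your by-hand version is viable, but one justification is off: the claim that ``no new multiplicity-one components appear in special fibres'' does not follow from flatness of $\pi$. Flatness helps control the multiplicity of the \emph{existing} spread-out components, but does not by itself rule out a component breaking into several pieces upon specialisation. The clean fix is to first discard the higher-multiplicity components of $\mathcal{X}_\eta$ (as the paper does), then note that the closures of the remaining components cover $\mathcal{X}$ over the generic point, hence over a dense open of $\mathcal{Y}$; combined with geometric integrality of each spread-out fibre and shrinking so that distinct closures have distinct fibres, this forces the fibrewise components to be exactly those indexed by $I$. Alternatively, appeal to constructibility of the number of geometric irreducible components in \cite[\S9.7]{EGAIV}, or simply cite \cite{Rom11} as the paper does.
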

\begin{proof}
	If $\pi^{-1}(\eta)$ has no irreducible component of multiplicity $1$,
	then the same holds on a dense open subset of $\mathcal{Y}$ (see \cite[Lem.~9.7.2]{EGAIV}).
	Hence the complement of  \eqref{def:frob_set} is not Zariski dense and therefore has density $0$.
	As in this case $\delta_\eta(\pi) = 0$ by definition, the result clearly holds.
	
	Assume now that $\pi^{-1}(\eta)$ has at least one irreducible component of multiplicity $1$.
	Removing multiple components if necessary, 
	we may assume that every irreducible component
	of $\pi^{-1}(\eta)$ has multiplicity $1$.
	As strict closed subsets have density $0$, we may also replace $\mathcal{Y}$ 
	by a dense open subset, if required.
	Hence we may assume that the fibre over every $y \in \mathcal{Y}$ has no multiple components,
	since being geometrically reduced is a constructible property \cite[Thm.~9.7.7]{EGAIV}.
	
Let $\Irr_{\mathcal{X}/\mathcal{Y}}$ be the
	``functor of open irreducible components of $\mathcal{X}/\mathcal{Y}$'',
defined by Romagny in \cite[D\'{e}f.~2.1.1]{Rom11}. Shrinking $\mathcal{Y}$ if necessary,
	from \cite[Lem.~2.1.2]{Rom11} and \cite[Lem.~2.1.3]{Rom11} we see that  $\Irr_{\mathcal{X}/\mathcal{Y}}$ is representable by a
	quasi-compact \'{e}tale algebraic space over $\mathcal{Y}$. 
	The generic fibre of $\Irr_{\mathcal{X}/\mathcal{Y}} \to \mathcal{Y}$ 
	finite: it is the finite \'etale $\kappa(\eta)$-scheme corresponding to the $\Gamma_\eta(\pi)$-set $I_\eta(\pi)$.
	In particular, shrinking $\mathcal{Y}$ if necessary,
	we may assume that 
	$\Irr_{\mathcal{X}/\mathcal{Y}}$ is finite \'{e}tale over $\mathcal{Y}$, hence
	$\Irr_{\mathcal{X}/\mathcal{Y}}$ is a scheme by Knutson's criterion \cite[Cor.~II.6.16]{Knu71}.
	
	Let $y \in \underline{\mathcal{Y}}$. By the definition of $\Irr_{\mathcal{X}/\mathcal{Y}}$,
	the fibre $\pi^{-1}(y)$ is split if and only if the fibre of $\Irr_{\mathcal{X}/\mathcal{Y}}$ over $y$ has a $\kappa(y)$-point.
	By the usual Galois correspondence for finite \'etale schemes, this occurs if and only if $\Frob_y \in \Gamma_\eta(\pi)$ 
	acts with a fixed point on $I_{\eta}(\pi)$. Hence the set \eqref{def:frob_set}
	is frobenian, and by the Chebotarev density theorem \eqref{eqn:Cheb} it has density $1 - \delta_\eta(\pi)$, as required.
\end{proof}

From the above proposition and the Chebotarev density theorem \eqref{eqn:Cheb},
we deduce the following corollary, which will be used in the proof of Theorem \ref{thm:iff}.
\begin{corollary} \label{cor:is_frob}
	Under the assumptions of Proposition \ref{prop:is_frob}, assume furthermore that $\delta_\eta(\pi)=1$.
	Then the set
	$$\{y \in \underline{\mathcal{Y}}: \pi^{-1}(y) \mbox{ is non-split}\}$$
	is not Zariski dense in $\underline{\mathcal{Y}}$.
\end{corollary}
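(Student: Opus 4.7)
The plan is to deduce the corollary immediately from Proposition \ref{prop:is_frob} together with the general fact, stated after \eqref{eqn:Cheb} in \S3.1, that a frobenian set is Zariski dense if and only if it has positive density.

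First, I would invoke Proposition \ref{prop:is_frob} to conclude that the set
$$F := \{y \in \underline{\mathcal{Y}} : \pi^{-1}(y) \text{ is non-split}\}$$
is frobenian with density $1 - \delta_\eta(\pi)$. The hypothesis $\delta_\eta(\pi) = 1$ then forces this density to equal $0$.

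Second, I would apply the equivalence ``positive density $\iff$ Zariski dense'' for frobenian sets. For completeness, one can spell this out directly: unpacking Definition \ref{def:frob}, write $F$ as determined by a conjugation-stable subset $C \subseteq \Gamma$ for some finite Galois extension $L/K$ of the function field $K$ of $\mathcal{Y}$, together with a non-Zariski-dense exceptional set $S \subset \underline{\mathcal{Y}}$. The mean $m(F) = |C|/|\Gamma|$ equals the density by \eqref{eqn:Cheb}, so the vanishing of the density forces $C = \emptyset$. Hence $F \subseteq S$, which is not Zariski dense by the definition of a frobenian set, and the corollary follows.

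Since the corollary is an almost formal consequence of the proposition and the Chebotarev-type density statement \eqref{eqn:Cheb}, there is no real obstacle; the only subtlety is to make sure one is using the full strength of the frobenian framework (existence of the non-Zariski-dense exceptional set $S$) rather than a purely density-theoretic argument, which would only yield a set of density zero rather than one contained in a proper closed subscheme.
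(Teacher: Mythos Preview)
Your proposal is correct and matches the paper's own approach: the corollary is stated immediately after Proposition \ref{prop:is_frob} with the remark that it follows from that proposition together with the Chebotarev density theorem \eqref{eqn:Cheb} and the equivalence ``positive density $\iff$ Zariski dense'' for frobenian sets. Your additional unpacking of why $m(F)=0$ forces $C=\emptyset$ and hence $F\subseteq S$ is a welcome clarification but not a departure from the paper's argument.
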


We now give an alternative interpretation of the $\delta$-invariant
for a variety over a number field, suggested to us by Alexei Skorobogatov:

\begin{lemma} 
	Let $X$ be a smooth variety over a number field $k$.
	Then the set $\{\fp \subset \OO_k: X(k_\fp) \neq \emptyset\}$ is frobenian with density $\delta(X)$.
\end{lemma}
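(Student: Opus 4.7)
The plan is to reduce the statement to a Chebotarev density computation on the fields of constants of the components of $X$. First, decompose $X = \bigsqcup_{i=1}^r X_i$ into its finitely many irreducible components, which are disjoint because $X$ is smooth hence regular. For each $i$, let $K_i$ denote the algebraic closure of $k$ inside the function field $k(X_i)$; smoothness of $X_i$ implies normality, so $K_i \subseteq \Gamma(X_i, \OO_{X_i})$, giving $X_i$ the structure of a smooth geometrically integral $K_i$-variety. With this setup the \'etale $k$-algebra $\prod_i K_i$ corresponds, under the Galois-theoretic dictionary recalled in \S\ref{sec:delta}, to the $\Gamma$-set $I$ of geometric irreducible components of $X$, where $\Gamma = \Gal(L/k)$ for $L$ the Galois closure of the compositum $K_1 \cdots K_r$, and $I \cong \bigsqcup_{i=1}^r \Gamma/H_i$ with $H_i = \Gal(L/K_i)$.

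The key reduction I would establish is that, for all but finitely many primes $\fp$ of $\OO_k$,
\[
X(k_\fp) \neq \emptyset \iff K_i \hookrightarrow k_\fp \text{ over } k \text{ for some } i.
\]
The forward direction holds for every $\fp$: a point in $X_i(k_\fp)$ induces a $k$-embedding $\kappa(x) \hookrightarrow k_\fp$ of the residue field of its image $x \in X_i$, and $K_i$ injects into $\kappa(x)$ via $K_i \subseteq \OO_{X_i,x} \twoheadrightarrow \kappa(x)$. For the converse, I would choose a smooth model $\mathcal{X}_i$ of $X_i$ over some $\OO_{K_i, S_i}$ with geometrically integral fibres outside $S_i$; an embedding $K_i \hookrightarrow k_\fp$ identifies $k_\fp$ with $K_{i,\fq}$ for some prime $\fq$ of $K_i$ above $\fp$ of local degree one. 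For all but finitely many such $\fp$, the Lang--Weil estimates produce an $\FF_\fq$-point on the smooth geometrically integral variety $(\mathcal{X}_i)_\fq$, and Hensel's lemma lifts it to an $\OO_{K_i,\fq}$-point of $\mathcal{X}_i$, yielding the desired $k_\fp$-point of $X_i \subseteq X$.

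Finally, for $\fp$ unramified in $L/k$, having a $k$-embedding $K_i \hookrightarrow k_\fp$ is equivalent to $\Frob_\fp \in \Gamma$ (well-defined up to conjugacy) admitting a fixed point on $\Gamma/H_i$. Hence the right-hand side of the equivalence above is the frobenian set attached to the conjugation-stable subset
\[
C = \bigl\{\gamma \in \Gamma : \gamma \text{ has a fixed point on } \textstyle\bigsqcup_i \Gamma/H_i\bigr\} \subseteq \Gamma,
\]
and the Chebotarev density theorem \eqref{eqn:Cheb} gives its density as $|C|/|\Gamma|$, which equals $\delta(X)$ by the identification $I \cong \bigsqcup_i \Gamma/H_i$. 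Since $\{\fp : X(k_\fp) \neq \emptyset\}$ differs from this frobenian set on only finitely many primes, it is itself frobenian with the same density.

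The main obstacle is the converse implication in the key reduction: one must coordinate the spreading-out of the models $\mathcal{X}_i$ so that Lang--Weil and Hensel apply simultaneously for the relevant prime $\fq \mid \fp$, uniformly in $i$. The remaining steps (component decomposition, identification of $I$ with $\bigsqcup_i \Gamma/H_i$, and Chebotarev) are routine.
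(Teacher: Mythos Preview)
Your argument is correct and reaches the same Chebotarev computation as the paper, but the route differs. The paper applies Proposition~\ref{prop:is_frob} to the structure morphism $\mathcal{X} \to \Spec \OO_{k,S}$ (using Romagny's functor $\Irr_{\mathcal{X}/\OO_{k,S}}$ to spread out the component structure) to show that the set of primes with split special fibre is frobenian of density $\delta(X)$, and then links ``split reduction'' to ``$X(k_\fp)\neq\emptyset$'' via Lang--Weil/Hensel in one direction and Lemma~\ref{lem:smooth_point} together with the finite \'etale nature of $\Irr$ in the other. You instead bypass the $\Irr$ machinery entirely: decomposing $X$ into components, extracting the constant fields $K_i$, and showing directly that $X(k_\fp)\neq\emptyset$ if and only if some $K_i$ embeds over $k$ into $k_\fp$. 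Your approach is more elementary and self-contained (no need for Romagny's representability result or the notion of split reduction), while the paper's proof has the advantage of simply quoting its already-developed framework. The ``main obstacle'' you flag is not serious: since there are only finitely many $X_i$, enlarging $S$ to contain all the $S_i$ and the finitely many primes below the Lang--Weil threshold for each model handles the uniformity at once.
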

\begin{proof} Let $S$ be a sufficiently large finite set of primes of $k$ for which there exists a smooth model $\mathcal{X} \to \Spec \OO_{k,S}$ for $X$. The proof of Proposition \ref{prop:is_frob} shows that by enlarging $S$ if necessary, we may also assume that
	$\Irr_{\mathcal{X}/\OO_{k,S}}$ is representable by a finite \'{e}tale scheme over $\Spec \OO_{k,S}$.

	By Proposition \ref{prop:is_frob},  the set 
	$$\{\fp \subset \OO_{k,S}: \mathcal{X}_{\FF_\fp} \mbox{ is split}\}$$ is frobenian with density $\delta(X)$. If $\mathcal{X}_{\FF_\fp}$ is split and $\fp$ is sufficiently large, the Lang--Weil estimates 
	and Hensel's lemma together imply that $X(k_\fp) \neq \emptyset$. If  $\mathcal{X}_{\FF_\fp}$ is non-split and $\fp \not\in S$, then 
	$\Irr_{\mathcal{X}/\OO_{k,S}}(\FF_\fp)=\Irr_{\mathcal{X}/\OO_{k,S}}(k_\fp) = \emptyset$,
	hence $X_{k_\fp}$ is non-split as well. Lemma \ref{lem:smooth_point} then implies that 
	$X(k_\fp) = \emptyset$, as required.
\end{proof}

We now relate the ``$\delta$-invariants'' to the quantity $\Delta(\pi)$  defined in \eqref{def:Delta}.
Let $\pi:X \to Y$ be a morphism of varieties over a number field $k$ with geometrically integral generic fibre,
with $Y$ integral.
Fix a model for $\pi$ over $\OO_{k,S}$, for a finite set of primes $S$, i.e.~a morphism of flat schemes of finite type $\mathcal{X} \to \mathcal{Y}$ over $\Spec \OO_{k,S}$ (again denoted by $\pi$), such that the induced morphism $\mathcal{X} \otimes_{\OO_{k,S}} k \to \mathcal{Y} \otimes_{\OO_{k,S}} k$ is identified with the original morphism $\pi$. Define
\begin{equation} \label{def:Delta_Y}
\Delta(\pi)
 := \lim_{B \to \infty} \frac{\sum_{\substack{\fp \subset \OO_{k,S} \\ N(\fp) \leq B}} \#\{y_\fp \in \mathcal{Y}(\FF_\fp) :\pi^{-1}(y_\fp) \mbox{ is non-split}\}}
{\sum_{\substack{\fp \subset \OO_{k,S} \\ N(\fp) \leq B}}N(\fp)^{n-1} }, 
\end{equation}
where $n = \dim Y$.
This definition is easily seen to be independent of the choice of $S$ and the corresponding model.
We now show that the limit (\ref{def:Delta_Y}) exists and calculate it; this will prove the first part of Theorem \ref{thm:non-split}.

\begin{proposition} \label{prop:Delta}
	The limit in \eqref{def:Delta_Y} exists and we have
	$$\Delta(\pi) = \sum_{D \in Y^{(1)}} (1 - \delta_D(\pi)).$$
\end{proposition}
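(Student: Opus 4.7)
The plan is to stratify the arithmetic scheme $\mathcal{Y}$ according to horizontal spreads of the codimension-one non-split fibres of $\pi|_Y$, apply the Chebotarev-type result of Proposition \ref{prop:is_frob} on each stratum, and combine the counts via the prime number theorem for arithmetic schemes.

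First I would observe that the sum on the right-hand side is finite. Since the generic fibre of $\pi$ is geometrically integral, the locus $W \subset Y$ of non-split fibres is a proper closed subscheme, with only finitely many codimension-one components $D_1, \ldots, D_r$. For $D \in Y^{(1)} \setminus \{D_1, \ldots, D_r\}$ the fibre $\pi^{-1}(D)$ is split, whence $\delta_D(\pi) = 1$ by Lemma \ref{lem:delta=1}, and $\sum_{D \in Y^{(1)}}(1 - \delta_D(\pi)) = \sum_{j=1}^r(1 - \delta_{D_j}(\pi))$.

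Next I would stratify the arithmetic model. Let $\overline{D_j} \subset \mathcal{Y}$ denote the Zariski closure of $D_j$; enlarging $S$ if necessary we may assume each $\overline{D_j}$ is integral and flat of relative dimension $n-1$ over $\OO_{k,S}$. By the spreading-out argument in the proof of Proposition \ref{prop:is_frob} (representability of $\Irr_{\mathcal{X}/\mathcal{Y}}$ by a finite \'etale scheme on a dense open, together with a section spreading out from the geometrically integral generic fibre), after enlarging $S$ again, every closed point of $\mathcal{Y} \setminus (\bigcup_j \overline{D_j} \cup \mathcal{Z})$ has split fibre, where $\mathcal{Z} \subset \mathcal{Y}$ is a closed subset of codimension at least $2$. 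Since $\mathcal{Z}$ and the pairwise intersections $\overline{D_i} \cap \overline{D_j}$ are of codimension $\geq 2$ in $\mathcal{Y}$, Lang--Weil \eqref{eqn:LW} yields
$$
\#\{y_\fp \in \mathcal{Y}(\FF_\fp) : \pi^{-1}(y_\fp)\text{ non-split}\} = \sum_{j=1}^{r} \#\{y \in \overline{D_j}(\FF_\fp) : \pi^{-1}(y)\text{ non-split}\} + O(N(\fp)^{n-2})
$$
for each $\fp \notin S$. Summing over $\fp$ with $N(\fp) \leq B$, the error term is $O(B^{n-1}/\log B)$, which is negligible compared with the denominator $\sum_{\fp: N(\fp) \leq B} N(\fp)^{n-1} \sim B^n/(n \log B)$.

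For each $j$, I would then apply Proposition \ref{prop:is_frob} to the dominant morphism $\mathcal{X} \times_\mathcal{Y} \overline{D_j} \to \overline{D_j}$, whose generic fibre is $\pi^{-1}(D_j)$: the set $F_j = \{y \in \underline{\overline{D_j}} : \pi^{-1}(y) \text{ non-split}\}$ is frobenian of mean $1 - \delta_{D_j}(\pi)$. Combining Serre's Chebotarev theorem \eqref{eqn:Cheb} with the prime number theorem \eqref{eqn:prime_number_theorem} for the $n$-dimensional arithmetic scheme $\overline{D_j}$ gives $\#\{y \in F_j : N(y) \leq B\} \sim (1 - \delta_{D_j}(\pi)) \Li(B^n)$. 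Points $y \in \underline{\overline{D_j}}$ whose residue field strictly contains $\FF_{\fp_y}$ satisfy $N(y) \geq N(\fp_y)^2$, and an argument parallel to \eqref{eqn:large_degree} shows their total contribution is $O(B^{n-1}/\log B)$, hence negligible. Summing over $j$ and dividing by the denominator yields the asserted formula. The main obstacle I expect is the stratification step: carefully spreading out constructibility from the generic fibre to the integral model so that the non-split locus in $\mathcal{Y}$ is concentrated on $\bigcup_j \overline{D_j}$ up to a codimension-$\geq 2$ subset; once this is in place, the remaining inclusion--exclusion, Lang--Weil, and Chebotarev inputs are routine.
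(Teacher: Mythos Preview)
Your proposal is correct and follows essentially the same route as the paper: reduce to the closures $\overline{D_j}$ of the codimension-one non-split locus, discard codimension-$\geq 2$ contributions via Lang--Weil, and apply Proposition~\ref{prop:is_frob} on each $\overline{D_j}$ together with \eqref{eqn:large_degree} to pass between $\FF_\fp$-rational points and all closed points. One minor slip: the bound you quote for closed points $y$ with $\kappa(y)\supsetneq\FF_{\fp_y}$ should be $O(B^{n-1/2})$ via \eqref{eqn:large_degree} rather than $O(B^{n-1}/\log B)$, but this is still negligible against $\Li(B^n)$ and does not affect the argument.
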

\begin{proof}
	Since the generic fibre of $\pi$ is geometrically integral, there exists a strict closed subset 
	$T$ of $Y$ such that the fibre above  each $y \in Y \setminus T$ is split.
	Thus by Lemma \ref{lem:delta=1}, the sum above can be taken over the
    points $D$ in $Y^{(1)} \cap T$.

	Let $\mathcal{T}$ denote the Zariski closure of $T$ in $\mathcal{Y}$. Enlarging $S$ if necessary,
	we find that for all $\fp \notin S$,
	if the fibre over $y \in \mathcal{Y}_{\FF_\fp}$ is non-split, then $y \in \mathcal{T}$.
	Hence
	\begin{equation} \label{eqn:Delta2}
	\Delta(\pi) 
	 = \lim_{B \to \infty} \frac{\sum_{\substack{\fp \subset \OO_{k,S} \\ N(\fp) \leq B}} 
	\#\{y_\fp \in 	\mathcal{T}(\FF_\fp) :\pi^{-1}(y_\fp) \mbox{ is non-split}\}}
	{\sum_{\substack{\fp \subset \OO_{k,S} \\ N(\fp) \leq B}} N(\fp)^{n-1} }.
	\end{equation}
	If $T$ has no irreducible component of codimension $1$ then the limit \eqref{eqn:Delta2} 
	is zero by \eqref{eqn:LW},
	so the proposition trivially holds. 
	Similarly, the intersection of any two irreducible components of $T$ of codimension $1$
	does not contribute to \eqref{eqn:Delta2}. As prime ideals $\fp$ of degree 
	greater than $1$ do not affect the limit \eqref{eqn:Delta2},	we therefore obtain
	\begin{align*}
	\Delta(\pi)
	& = \sum_{D \in Y^{(1)} \cap T} 
	\dens\left(\{y \in \underline{\mathcal{T}_D} :\deg(y)=1, \pi^{-1}(y) \mbox{ is non-split}\} \right),
	\end{align*}
	where $\mathcal{T}_D$ denotes the closure of $D$ in $\mathcal{T}$.
	As the set of closed points of degree larger than $1$ has density zero \eqref{eqn:large_degree},
	the result then follows from applying Proposition \ref{prop:is_frob}
	to each of the $\mathcal{T}_D$ appearing in the above sum.
\end{proof}

We finish by showing that the $\delta$-invariants are birational invariants of the generic fibre, under suitable conditions.
For technical reasons, we work with the class of \emph{almost smooth} morphisms, as defined in \cite[Def.~2.1]{Lou13}.

Let $R$ be a discrete valuation ring with perfect residue field. A separated morphism of finite type 
$\pi: X \to \Spec R$ with smooth generic fibre is said to be \emph{almost smooth} if for any \'{e}tale $R$-algebra $R'$,
each $R'$-point of $X$ lies in the smooth locus of $X$. The canonical example of such a morphism is a dominant morphism $\pi$ with $X$ regular \cite[Prop.~3.1.2]{BLR90}. The extra flexibility given by working with such morphisms sometimes comes in useful for applications, cf.~the proof of Theorem \ref{thm:Serre}.

\begin{lemma} \label{lem:delta_birational}
	Let $\pi_1: X_1 \to \Spec R$ and $\pi_2: X_2 \to \Spec R$ be proper, almost smooth morphisms 
	with $X_1$ and $X_2$ integral. Let $D$ be the closed point of $\Spec R$. If
	there exists a birational map $X_1\dashrightarrow X_2$ compatible with $\pi_1$ and $\pi_2$,
	then we have
	$$\delta_D(\pi_1) = \delta_D(\pi_2).$$
\end{lemma}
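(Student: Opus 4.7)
The plan is to show that $\delta_D(\pi)$ admits an intrinsic characterisation in terms of rational points over unramified extensions of $R$, which will be a manifest birational invariant thanks to Nishimura's lemma applied to the smooth proper generic fibres. To simplify the setup, I would first replace $R$ by its henselisation: this preserves the hypotheses as well as the special fibres and their Galois actions, and hence the invariants $\delta_D(\pi_i)$. Thus, I may assume that $R$ is henselian, so that every finite separable extension $\kappa'/\kappa$ lifts uniquely to a finite \'etale extension $R'/R$ with residue field $\kappa'$.

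The next step is to establish the following characterisation: for any proper almost smooth $\pi\colon X\to \Spec R$ with $X$ integral, and any finite \'etale extension $R'/R$ as above,
$$X(R') \neq \emptyset \iff X^{\mathrm{sm}}_D(\kappa') \neq \emptyset.$$
One direction uses almost smoothness (any $R'$-point lies in $X^{\mathrm{sm}}$) and reduction; the converse uses Hensel's lemma in the henselian ring $R'$ to lift a smooth $\kappa'$-point to an $R'$-point. Moreover, by the valuative criterion of properness, $X(R') = X_K(K')$ with $K' = \Frac(R')$, so the left-hand side depends only on the generic fibre $X_K$.

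Now, since the birational map $X_1 \dashrightarrow X_2$ is over $R$, the induced birational map between the smooth proper $K$-varieties $X_{1,K}$ and $X_{2,K}$ allows us to apply Nishimura's lemma to conclude that $X_{1,K}(K') \neq \emptyset \iff X_{2,K}(K') \neq \emptyset$ for every extension $K'/K$. Combining with the above characterisation yields
$$X^{\mathrm{sm}}_{1,D}(\kappa') \neq \emptyset \iff X^{\mathrm{sm}}_{2,D}(\kappa') \neq \emptyset$$
for every finite separable extension $\kappa'/\kappa$. The smooth locus $X^{\mathrm{sm}}_{i,D}$ decomposes geometrically according to $I_D(\pi_i)$: the geometric connected components of $X^{\mathrm{sm}}_{i,D}$ are precisely the smooth loci of the multiplicity-one geometric components, and the $\Gamma$-action agrees with that on $I_D(\pi_i)$. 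Thus the condition ``$X^{\mathrm{sm}}_{i,D}(\kappa') \neq \emptyset$'' probes the fixed-point structure of $\Gal(\kappa^s/\kappa')$ acting on $I_D(\pi_i)$, and this common information extracted from both models suffices to pin down the fixed-point counts of each open subgroup of $\Gamma$ and hence $\delta_D$ itself, by a standard Möbius inversion on the subgroup lattice.

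The main obstacle is the last, combinatorial, step: ``$X^{\mathrm{sm}}_{i,D}(\kappa')\neq\emptyset$'' is a priori weaker than determining $\bigl|I_D(\pi_i)^{\Gal(\kappa^s/\kappa')}\bigr|$, because a $\Gal(\kappa^s/\kappa')$-fixed orbit on $I_D(\pi_i)$ need not carry a $\kappa'$-rational smooth point for arbitrary perfect $\kappa'$. I would handle this by passing to a cofinal system of finite separable extensions (using perfectness to ensure smooth models of Galois orbits are defined over such extensions), and exploiting that for sufficiently large $\kappa'$, every $\Gal(\kappa^s/\kappa')$-fixed orbit is represented by a $\kappa'$-point. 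Since $\delta_D$ is a finite combinatorial invariant of the $\Gamma$-set $I_D(\pi_i)$ and is determined by the fixed-point counts of all open subgroups of $\Gamma$, the equality $\delta_D(\pi_1) = \delta_D(\pi_2)$ then follows.
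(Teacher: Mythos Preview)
Your approach via Nishimura's lemma is genuinely different from the paper's, and the first steps are sound: since $X_i$ is integral and dominates the DVR $\Spec R$, it is automatically flat, so a smooth point of the special fibre is a smooth point of $X_i/R$ and Hensel applies; combined with almost smoothness and properness you do get $X_i(R')\neq\emptyset \iff X^{\mathrm{sm}}_{i,D}(\kappa')\neq\emptyset$, and Nishimura then gives equality of these bit functions for $i=1,2$. The gap is in the last step. The condition $X^{\mathrm{sm}}_{i,D}(\kappa')\neq\emptyset$ is \emph{strictly stronger} than $I_D(\pi_i)^{\Gal(\kappa^s/\kappa')}\neq\emptyset$: the latter only asks that some multiplicity-one component be defined over $\kappa'$, while the former additionally requires a $\kappa'$-rational smooth point on it, and over a general perfect residue field these differ. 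So your bit function entangles the Galois combinatorics of $I_D(\pi_i)$ (which is all that $\delta_D$ sees) with the arithmetic of the individual components, and no M\"obius inversion on subgroups will disentangle them. Your proposed fix of passing to ``sufficiently large $\kappa'$'' does not help: once $\kappa'$ contains the splitting field, $\Gal(\kappa^s/\kappa')$ acts trivially on $I_D(\pi_i)$, so you learn nothing about fixed points of the nontrivial elements of $\Gamma$ that actually enter into $\delta_D$.

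The paper's argument circumvents this by working directly with fields of definition rather than with rational points. One partially orders the multiplicity-one irreducible components of $\pi_i^{-1}(D)$ by inclusion of the algebraic closure of $\kappa(D)$ in their function fields, and invokes Skorobogatov's result \cite[Cor.~17.3]{Sko13} (extended verbatim from ``regular'' to ``almost smooth'') that the $\Gamma_D$-set $\min\{I_D(\pi_i)\}$ of minimal components is a birational invariant. Since an element of $\Gamma_D$ has a fixed point on $I_D(\pi_i)$ if and only if it has one on $\min\{I_D(\pi_i)\}$, the equality $\delta_D(\pi_1)=\delta_D(\pi_2)$ is immediate. The underlying mechanism is close to yours---one still lifts and specialises across the birational map---but the object being transported is the field of definition of a minimal component, not a rational point on it, and that is precisely the refinement needed to recover $\delta_D$.
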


\begin{proof}	
	The proof is a minor adaptation of the proof of \cite[Cor.~17.3]{Sko13}. 
	
	Let us first assume that $\delta_D(\pi_1)= 0$, i.e.~that $\pi_1^{-1}(D)$
	does not have an irreducible component  of multiplicity $1$. Then
	$\pi_2^{-1}(D)$ also has no such component by \cite[Lem.~2.2]{Lou13}, hence $\delta_D(\pi_2)=0$
	as required.
	
	Let us now assume that $\delta_D(\pi_1)\neq 0$, hence $\delta_D(\pi_2)\neq 0$.
	Choose a  splitting field, with Galois group $\Gamma_D$, for the irreducible
	components of $\pi_1^{-1}(D)$ and $\pi_2^{-1}(D)$.
	The irreducible components of $\pi_i^{-1}(D)$ of multiplicity $1$ admit
	a natural ordering:  for two such components $c_1$ and $c_2$, we say that
	$c_1 \leq c_2$ if the algebraic closure of $\kappa(D)$ in the function field of $c_1$
	is a subfield of the algebraic closure of $\kappa(D)$ in the function field of $c_2$.

	Let $\min\{I_{D}(\pi_i)\} \subset I_{D}(\pi_i)$ be
	the set of geometric irreducible components of $\pi_i^{-1}(D)$ of multiplicity $1$ which are minimal for this ordering. An element of $\Gamma_{D}$
	acts with a fixed point on $I_{D}(\pi_i)$ if and only if it acts with a fixed point on $\min\{I_{D}(\pi_i)\}$.
	It therefore suffices to show that $\min\{I_{D}(\pi_i)\}$ is a birational invariant,
	i.e.~that
	\begin{equation} \label{eqn:birational}
		\min\{I_{D}(\pi_1)\} \cong \min\{I_{D}(\pi_2)\}
	\end{equation}
	as $\Gamma_D$-sets.
	Under the additional assumption that each $X_i$ is regular, the claim \eqref{eqn:birational}
	is \cite[Cor.~17.3]{Sko13}. The proof of this	goes through if one works with almost 
	smooth morphisms as in \cite[Lem.~2.2]{Lou13}: 
	it suffices to replace the assumption that the total space is regular in the third 
	paragraph of the proof of \cite[Prop.~17.2]{Sko13} by the assumption that the morphism
	is almost smooth. This completes the proof.
\end{proof}

\section{Proof of results} \label{Sec:proofs}
We now prove the main results stated in the introduction.

\subsection{Proof of Theorem \ref{thm:iff}}
The proof of Theorem \ref{thm:iff} is a minor adaptation
of the proof of \cite[Thm.~3.8]{BBL15}. The key new input is the following generalisation of \cite[Cor.~3.7]{BBL15}:

\begin{proposition} \label{prop:split_codim_2}
	Let $\pi:X \to Y$ be a dominant morphism of varieties over a number field $k$ with geometrically integral generic fibre and $Y$ integral. 
	Assume that $\Delta(\pi) = 0$. Then there exist a finite set of primes $S$ of $k$, a model $\mathcal{X} \to \mathcal{Y}$ for $\pi$ over $\OO_{k,S}$  (again denoted by $\pi$) and a closed subset $\mathcal{Z} \subset \mathcal{Y}$ of codimension 
	at least two, such that the induced map
	\[
	(\mathcal{X} \setminus \pi^{-1}(\mathcal{Z})) (\OO_{\fp}) \to 
	(\mathcal{Y} \setminus \mathcal{Z})(\OO_{\fp})
	\]
	is surjective for all primes $\fp \notin S$.
\end{proposition}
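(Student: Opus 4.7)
The plan is to follow the strategy of \cite[Cor.~3.7]{BBL15}, with Corollary \ref{cor:is_frob} as the key new input that allows one to accommodate codimension one points $D$ where $\pi^{-1}(D)$ is non-split but $\delta_D(\pi)=1$. The hypothesis $\Delta(\pi)=0$, combined with Proposition \ref{prop:Delta}, forces $\delta_D(\pi)=1$ for every $D\in Y^{(1)}$. First I would take a finite set $S$ of primes of $k$ large enough that there exists a flat model $\pi:\mathcal{X}\to\mathcal{Y}$ over $\OO_{k,S}$ with $\mathcal{X}$ and $\mathcal{Y}$ regular (using generic smoothness in characteristic zero and spreading out), and such that the non-smooth locus of $\pi:\mathcal{X}\to\mathcal{Y}$ projects to a closed subset $\mathcal{T}\subset\mathcal{Y}$ which is a divisor with no vertical components; its irreducible components $\mathcal{T}_1,\ldots,\mathcal{T}_r$ then correspond to the finitely many codimension one points $D_1,\ldots,D_r \in Y^{(1)}$ over which $\pi$ fails to be smooth. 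After further enlarging $S$, using constructibility of geometric integrality, every fibre over a closed point of $\mathcal{Y}\setminus\mathcal{T}$ is smooth and geometrically integral.

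For each $i$ I would apply Corollary \ref{cor:is_frob} to the induced morphism $\mathcal{X}\times_{\mathcal{Y}}\mathcal{T}_i\to\mathcal{T}_i$, whose generic fibre is $\pi^{-1}(D_i)$: since $\delta_{D_i}(\pi)=1$, the set $\mathcal{N}_i\subset\underline{\mathcal{T}_i}$ of closed points $y$ for which $\pi^{-1}(y)$ is non-split is not Zariski dense. Let $\mathcal{W}_i\subset\mathcal{T}_i$ be its Zariski closure; then $\mathcal{W}_i$ has codimension at least one in $\mathcal{T}_i$ and therefore codimension at least two in $\mathcal{Y}$. I would then take $\mathcal{Z}$ to be the union of all $\mathcal{W}_i$, the pairwise intersections $\mathcal{T}_i\cap\mathcal{T}_j$ for $i\neq j$, and the singular loci of the $\mathcal{T}_i$; each of these pieces has codimension at least two in $\mathcal{Y}$, so $\mathcal{Z}$ does as well.

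Surjectivity would then follow from Hensel's lemma. Let $P\in(\mathcal{Y}\setminus\mathcal{Z})(\OO_\fp)$ with reduction $\bar{P}$. If $\bar{P}\notin\mathcal{T}$, the fibre $\pi^{-1}(\bar{P})$ is smooth and geometrically integral over $\FF_\fp$, hence by Lang--Weil it admits an $\FF_\fp$-point for $\fp$ outside a finite set (absorbed into $S$), which lifts by Hensel. If instead $\bar{P}\in\mathcal{T}_i\setminus\mathcal{W}_i$ for a unique $i$, then by construction $\pi^{-1}(\bar{P})$ is split, so it contains a geometrically integral component $C$ of multiplicity one; by Lang--Weil, $C$ has a smooth $\FF_\fp$-point $x$. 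Since $\mathcal{X}$ is regular at $x$ and $x$ lies on a component of multiplicity one, the fibre of $\pi$ at $x$ has the expected dimension, so miracle flatness (\cite[Thm.~23.1]{Mat86}-type) shows $\pi$ is flat, and hence smooth, at $x$; Hensel then lifts $x$ to an $\OO_\fp$-point of $\mathcal{X}$ above $P$. Any such lift automatically avoids $\pi^{-1}(\mathcal{Z})$ since its image in $\mathcal{Y}$ equals $P$, which avoids $\mathcal{Z}$. The main obstacle in this argument is the second case: guaranteeing that the Lang--Weil smooth point of $C$ is actually a smooth point of $\pi$, rather than merely of the fibre. This is precisely what the multiplicity-one condition and the regularity of $\mathcal{X}$ provide via miracle flatness, and it explains why the initial choice of a regular model and the careful extraction of $\mathcal{W}_i$ via Corollary \ref{cor:is_frob} are both essential.
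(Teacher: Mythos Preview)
Your strategy matches the paper's: reduce via Proposition~\ref{prop:Delta} to $\delta_D(\pi)=1$ for every $D\in Y^{(1)}$, apply Corollary~\ref{cor:is_frob} on each codimension-one component $\mathcal{T}_i$ to confine the closed points with non-split fibre to a non-Zariski-dense subset, let $\mathcal{Z}$ be the union of these closures with the lower-dimensional pieces, and then finish with a Lang--Weil-type estimate and Hensel. The only difference is that the paper outsources this last step to \cite[Lem.~3.6, Cor.~3.7]{BBL15} and Deligne's estimates rather than arguing it inline.

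There is, however, one wobble in your inline version. Your miracle-flatness step asserts that ``$x$ lies on a component of multiplicity one'' forces the fibre of $\pi$ at $x$ to have the expected dimension; but multiplicity one only says that the fibre is generically reduced along $C$ and tells you nothing about $\dim C$, which can exceed the generic fibre dimension precisely where $\pi$ fails to be flat. The clean fix is to first invoke Lemma~\ref{flatness} to make $\pi$ flat over the complement of a closed subset of codimension at least two, and absorb that subset into $\mathcal{Z}$. Once $\pi$ is flat, miracle flatness is no longer needed; what remains is to ensure that your Lang--Weil point $x$ is smooth \emph{in the whole fibre}, not merely in $C$. This holds once $x$ avoids the other irreducible components and the non-reduced locus of the fibre, both of which meet $C$ in a set of strictly smaller dimension, so a uniform Lang--Weil bound still produces such a point for $\Norm(\fp)$ large. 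With this adjustment your argument goes through.
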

\begin{proof}	
	Choose a finite set of primes $S$ of $k$ and a model $\pi: \mathcal{X} \to \mathcal{Y}$ for $\pi$
	over $\OO_{k,S}$.
	As in the proof of Proposition \ref{prop:Delta}, enlarging $S$ if necessary, there is
	a strict closed subset $T \subset Y$ such that for all $\fp \notin S$
	and all points $y_\fp \in \mathcal{Y}_{\FF_\fp} \setminus \mathcal{T}_{\FF_\fp}$, the fibre $\pi^{-1}(y_\fp)$
	is split (here $\mathcal{T}$ denotes the closure  of $T$ in $\mathcal{Y}$).
	
	We claim that, assuming $\Delta(\pi) = 0$ and enlarging $S$ if necessary,
	there is a closed subset $\mathcal{Z} \subset \mathcal{T} \subset \mathcal{Y}$ of 
	codimension at least two in $\mathcal{Y}$,
	such that for all $\fp \notin S$ and all \emph{closed} points 
	$y_\fp \in \underline{\mathcal{Y}}_{\FF_\fp} \setminus \underline{\mathcal{Z}}_{\FF_\fp}$,
	the fibre $\pi^{-1}(y_\fp)$ is split.
	
	The claim is trivial if $\mathcal{T}$ itself has codimension at least two. 
	If not, let $D$ be an irreducible component of $T$ which has codimension $1$ in $Y$ 
	and denote by $\mathcal{T}_D$ the closure of $D$ in $\mathcal{T}$.
	As $\delta_D(\pi) = 1$,
	Corollary \ref{cor:is_frob} yields a dense open subscheme $\mathcal{U} \subset \mathcal{T}_D$
	such that the fibre over all $u \in \underline{\mathcal{U}}$ is split. Applying
	this to all such components $D$ of $T$ proves the claim.
	
	Hence, as in the proof of \cite[Lem.~3.6]{BBL15} and \cite[Cor.~3.7]{BBL15} and enlarging $S$ if necessary,
	we may apply Deligne's estimates \cite[Thm.~1]{Del80} to deduce that the fibre $\pi^{-1}(y_v)$ over every point
	$y_\fp \in \mathcal{Y}(\FF_\fp) \setminus \mathcal{Z}(\FF_\fp)$ contains a smooth $\FF_\fp$-point.
	The conclusion then follows from Hensel's lemma.
\end{proof}

Let $\pi:X \to \PP_k^n$ be as in Theorem \ref{thm:iff}, and $\mathcal{Z} \subset \PP_{\OO_k}^n$ as in 
Proposition \ref{prop:split_codim_2}. For each place $v$ of $k$,
let $\mu_v$ be a choice of Haar measure on $k_v^{n+1}$, 
normalised so that $\mu_\fp(\OO_\fp^{n+1})=1$ for all primes $\fp$ of $k$.
We wish to apply the version of the sieve of Ekedahl given in \cite[Prop.~3.4]{BBL15}, taking
$$\Omega_{v} = \pi(X(k_v)),$$
for any place $v$ of $k$ in the notation of \emph{loc.~cit}.

First consider the affine cone $\Omega_v^\mathrm{aff} \subset k_v^{n+1}$ of $\Omega_v$.
It follows from \cite[Lem.~3.9]{BBL15}
that $\Omega_v^\mathrm{aff}$ is measurable with respect to $\mu_v$ and has boundary of measure zero.
Moreover, our assumption $X(\Adele_k) \neq \emptyset$ implies that 
$\mu_v(\Omega_v^\mathrm{aff}) > 0$ for all $v$ (again see \cite[Lem.~3.9]{BBL15}).
This shows that our $\Omega_v$ satisfy the conditions $\mu_v(\partial \Omega_v^\mathrm{aff}) = 0$
and $\mu_v(\Omega_v^\mathrm{aff}) > 0$ of \cite[Prop.~3.4]{BBL15}.
Next, it follows from Proposition \ref{prop:split_codim_2}
that 
$$\{x \in \PP^n(\OO_{\fp}): x \bmod \fp \not \in \mathcal{Z}(\FF_\fp) \}
\subset \pi(X(k_\fp))$$
for all but finitely many primes $\fp$. Combining this with \cite[Lem.~3.5]{BBL15} we see that the
hypothesis (3.5) of \cite[Prop.~3.4]{BBL15} is also satisfied. We may therefore apply \cite[Prop.~3.4]{BBL15} 
to deduce that
\begin{align*}
\lim_{B \to \infty} \frac{N_{\mathrm{loc}}(\pi,B)}{\#\{x \in \PP^n(k): H(x) \leq B\}}  
= & \prod_{v \mid \infty} \frac{\mu_v(\{\x \in \pi(X(k_v))^\mathrm{aff}: \max_i{|x_i|_v} \leq 1\})}
	{\mu_v(\{\x \in k_v^{n+1}: \max_i{|x_i|_v} \leq 1\})}	\\
& \times	\prod_{\fp} \mu_\fp(\{\x \in \pi(X(k_\fp))^\mathrm{aff} \cap \OO_\fp^{n+1}\}),
\end{align*}
and that this limit is non-zero.
This completes the proof of Theorem \ref{thm:iff}.
\qed

\subsection{Proof of Theorem \ref{thm:non-split_integral}}
We now prove Theorem \ref{thm:non-split_integral}, by applying the large sieve to the criterion from Theorem \ref{thm:sparsity}. We will prove Theorem \ref{thm:non-split} using Theorem \ref{thm:non-split_integral} in the next section. We let $\pi:X \to \mathbb{A}_k^n$ be as in Theorem \ref{thm:non-split_integral} and choose an $\RR$-vector space norm $\|\cdot\|$ on $k_\infty^n$. We choose a sufficiently
large set of primes $S$ of $k$ and a model $\pi: \mathcal{X} \to \mathbb{A}^n_{\OO_{k,S}}$ for $\pi$ over $\OO_{k,S}$;
in the proof we allow ourselves to enlarge $S$ if necessary.

\subsubsection{The large sieve}
Define  $\mu$ multiplicatively on ideals of $\OO_k$ via 
$$\mu(\fp) = -1, \qquad \mu(\fp^m) = 0, \,\, m > 1,$$
for any prime ideal $\fp$.
We will need the following version of the large sieve, cf.~\cite[Lem.~6.2]{BBL15}:

\begin{proposition}\label{prop:large_sieve}
Let $m,n\in \NN$,  let $B\geq 1$ and 
let $\Omega \subset \OO_k^{n}$. For each prime ideal $\fp$,
 assume that there exists  $\omega(\fp) < 1$ such that    
the image of $\Omega$ in $(\OO_k/\fp^m)^{n}$ has at most $(1-\omega(\fp))(\Norm\fp)^{mn}$ elements. Then 
$$
\#\{\x\in \Omega: \|\x\| \leq B\} \ll \frac{B^{n}}{L(B^{1/(2m)})},
\,\, \mbox{where }
L(B)=\sum_{\substack{\fa \subset \OO_k \\ \Norm(\fa) \leq B}} 
|\mu(\fa)|\prod_{\fp\mid \fa} \frac{\omega(\fp)}{1-\omega(\fp)}.
$$
\end{proposition}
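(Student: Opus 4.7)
The plan is to deduce the bound from the $n$-dimensional large sieve over $\mathcal{O}_k$, after reorganising the prime-power hypothesis in terms of squarefree composite moduli via the Chinese Remainder Theorem. The key input I would cite is the following number-field Gallagher--Montgomery inequality (available, e.g., via \cite[Lem.~6.2]{BBL15}, or extracted by running Montgomery's duality argument against additive characters of the lattice $\mathcal{O}_k^n\subset k_\infty^n$ with cut-off $\|\cdot\|\le B$): if $\Omega\subset \mathcal{O}_k^n$ consists of points of norm at most $B$, if $\omega(\mathfrak{p})\in[0,1)$ is a local sifting density, and if for every squarefree integral ideal $\mathfrak{c}$ of $\mathcal{O}_k$ with $N\mathfrak{c}\le R$ one has $|\Omega\bmod\mathfrak{c}|\le (N\mathfrak{c})^n\prod_{\mathfrak{p}\mid\mathfrak{c}}(1-\omega(\mathfrak{p}))$, then
$$\#\{\mathbf{x}\in\Omega:\|\mathbf{x}\|\le B\}\;\ll\;\frac{B^n+R^{2n}}{\sum_{N\mathfrak{c}\le R}|\mu(\mathfrak{c})|\prod_{\mathfrak{p}\mid\mathfrak{c}}\omega(\mathfrak{p})/(1-\omega(\mathfrak{p}))}.$$

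The main step is to reduce the hypothesis on prime-power moduli $\mathfrak{p}^m$ to the squarefree hypothesis above. I would set $Q:=B^{1/(2m)}$ and $R:=B^{1/2}$, and apply the large sieve with moduli $\mathfrak{c}=\mathfrak{a}^m$, where $\mathfrak{a}$ ranges over squarefree integral ideals of norm at most $Q$; then $N\mathfrak{c}=(N\mathfrak{a})^m\le Q^m=R$. By the Chinese Remainder Theorem $(\mathcal{O}_k/\mathfrak{a}^m)^n\cong \prod_{\mathfrak{p}\mid\mathfrak{a}}(\mathcal{O}_k/\mathfrak{p}^m)^n$, and the hypothesis on each factor multiplies out to
$$|\Omega\bmod\mathfrak{a}^m|\;\le\;\prod_{\mathfrak{p}\mid\mathfrak{a}}(1-\omega(\mathfrak{p}))(N\mathfrak{p})^{mn}\;=\;(N\mathfrak{c})^n\prod_{\mathfrak{p}\mid\mathfrak{c}}(1-\omega(\mathfrak{p})),$$
which is exactly the multiplicative sifting input needed. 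Since $R^{2n}=B^n$, the numerator collapses to $\ll B^n$, and reindexing the denominator via $\mathfrak{c}\leftrightarrow\mathfrak{a}$ yields
$$\sum_{\substack{\mathfrak{a}\text{ sqfree}\\N\mathfrak{a}\le B^{1/(2m)}}}\prod_{\mathfrak{p}\mid\mathfrak{a}}\frac{\omega(\mathfrak{p})}{1-\omega(\mathfrak{p})}\;=\;L(B^{1/(2m)}),$$
giving exactly the bound in the statement.

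The one step demanding care is pinning down the number-field large sieve with the lattice norm $\|\cdot\|$ rather than the usual arithmetic absolute value; once that inequality is in hand, everything else is routine CRT bookkeeping, and no genuinely new analytic difficulty arises. In practice I expect the proof to consist of invoking the version already set up in \cite[Lem.~6.2]{BBL15} and then performing the reduction $\mathfrak{a}\mapsto\mathfrak{a}^m$ above verbatim.
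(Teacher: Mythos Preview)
Your approach is correct and is essentially the same as the paper's, which simply cites Serre \cite[\S6]{Ser90} for $k=\QQ$ and declares the extension to number fields standard via \cite[\S12.1]{Ser97b}; your CRT reduction of the $\mathfrak{p}^m$-hypothesis to moduli $\mathfrak{a}^m$ with $\mathfrak{a}$ squarefree is exactly Serre's manoeuvre. One caveat: citing \cite[Lem.~6.2]{BBL15} as the analytic input is circular, since that lemma is precisely the statement you are proving --- the genuine input is the additive large sieve inequality for the lattice $\mathcal{O}_k^n\subset k_\infty^n$ (Huxley/Schaal type), after which Montgomery's arithmetic sieve with the moduli $\mathfrak{a}^m$ runs as you describe.
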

\begin{proof}
	When $k = \QQ$, this is given in \cite[\S 6]{Ser90}. The extension to number fields is standard
	and follows a strategy similar to the version given in \cite[\S 12.1]{Ser97b}.
\end{proof}

\subsubsection{The sieving set}
Choose a square-free  polynomial $$f\in \OO_{k,S}[x_1,\ldots,x_n]$$
such that the closed subscheme $T = \{f = 0\}$ of $\mathbb{A}^n_k$ contains the non-smooth locus of $\pi$.
Let $\mathcal{T}$ be the closure of $T$ in $\mathbb{A}^n_{\OO_{k,S}}$.
We now apply Theorem \ref{thm:sparsity} to obtain a criterion which is amenable to the large sieve.
In the next statement, we write $\fp \| f(\x)$ to mean that $\fp \mid f(\x)$ 
(as ideals), but $\fp^2 \nmid f(\x)$.

\begin{proposition}  \label{prop:upper_bound}
	Enlarging $S$ if necessary,
	there exists $g\in \OO_{k,S}[x_1,\ldots,x_n]$
	which is coprime to $f$ such that
	$$N_{\mathrm{loc}}(\pi,B) \leq \# \{ \x \in \OO_k^{n}:\|\x \| \leq B, 
	\x \bmod \fp^2 \notin A(\fp) \text{ for all } \fp \notin S\},$$
	where 
	\begin{equation} \label{def:A(p)}
	A(\fp) = \{\x \in (\OO_{\fp}/\fp^2)^{n} : \fp \| f(\x), \fp \nmid g(\x),
	\pi^{-1}(\x \bmod \fp) \mbox{ is non-split}\}.
	\end{equation}
\end{proposition}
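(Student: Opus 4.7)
The plan is to apply Theorem~\ref{thm:sparsity} to $\pi$ with target $\mathcal{Y} = \mathbb{A}^n_{\OO_{k,S}}$ and divisor $\mathcal{T} = V(f)$, and then to translate the two geometric hypotheses of that theorem---transversal intersection with $\mathcal{T}$, and avoidance of a codimension-two exceptional set $\mathcal{Z}$---into the arithmetic conditions $\fp \| f(\x)$ and $\fp \nmid g(\x)$ respectively. After enlarging $S$ so that $\OO_{k,S}$ is a principal ideal domain, so that $f$ has no vertical components over $\Spec \OO_{k,S}$, and so that $\pi$ extends to a proper morphism of regular $\OO_{k,S}$-models $\pi:\mathcal{X} \to \mathcal{Y}$, the divisor $\mathcal{T}$ is reduced and contains the non-smooth locus of $\pi$ by construction. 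Theorem~\ref{thm:sparsity} then furnishes a closed subset $\mathcal{Z} \subset \mathcal{T}$ of codimension at least two in $\mathcal{Y}$, containing the singular locus of $\mathcal{T}$, and enjoying the stated sparsity property.

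I will then construct $g$ by a prime-avoidance argument. Writing $f = f_1 \cdots f_r$ as a product of distinct irreducible factors in the UFD $\OO_{k,S}[x_1,\ldots,x_n]$, the irreducible components of $\mathcal{T}$ are the hypersurfaces $V(f_i)$. Since $\mathcal{Z}$ has codimension at least two in $\mathcal{Y}$, the ideal $I(\mathcal{Z})$ is not contained in any of the height-one primes $(f_i)$. Prime avoidance then produces an element $g \in I(\mathcal{Z})$ lying outside every $(f_i)$, hence coprime to $f$; by construction $g$ vanishes on $\mathcal{Z}$, so for any $\x \in \OO_k^n$ the condition $\fp \nmid g(\x)$ forces $\x \bmod \fp \notin \mathcal{Z}(\FF_\fp)$.

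To establish the stated inequality, let $\x \in \OO_k^n$ be counted by $N_{\mathrm{loc}}(\pi,B)$, so that $\pi^{-1}(\x)(k_\fp) \neq \emptyset$ for every finite place $\fp$. Suppose, for contradiction, that $\x \bmod \fp^2 \in A(\fp)$ for some $\fp \notin S$, and let $\mathcal{P}:\Spec \OO_\fp \to \mathcal{Y}$ be the section induced by $\x$. The condition $\fp \| f(\x)$ says that the pullback $\mathcal{P}^* f = f(\x) \in \OO_\fp$ is a uniformiser, which is precisely the algebraic encoding of the assertion that the image of $\mathcal{P}$ meets $\mathcal{T}$ transversally at the closed point $\x \bmod \fp$. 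The condition $\fp \nmid g(\x)$ ensures that this intersection occurs outside $\mathcal{Z}$, and the non-splitness of the reduced fibre is built into the definition of $A(\fp)$. Theorem~\ref{thm:sparsity} then yields $\pi^{-1}(\x)(k_\fp) = \emptyset$, contradicting the choice of $\x$.

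The conceptual heart of the argument, and the step I expect to require the most care, is the identification of the divisibility condition $\fp \| f(\x)$ with the scheme-theoretic notion of a transversal intersection of the arc $\mathcal{P}$ with the divisor $\mathcal{T}$ at its closed point; once this bridge between geometry and number theory is in place, the remainder of the proof consists of prime avoidance and a finite sequence of harmless enlargements of $S$.
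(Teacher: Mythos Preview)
Your proof is correct and follows essentially the same route as the paper: apply Theorem~\ref{thm:sparsity} to obtain the codimension-two locus $\mathcal{Z}$, choose $g$ vanishing on $\mathcal{Z}$ and coprime to $f$, and then identify the transversality condition with $\fp \| f(\x)$ via the local computation you describe. Your explicit invocation of prime avoidance to produce $g$, and your spelling-out of why $\fp \| f(\x)$ encodes transversality of the arc $\mathcal{P}$ with $\mathcal{T}$, are welcome elaborations of steps that the paper leaves implicit or defers to \cite[\S6]{BBL15}.
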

\begin{proof}
	Enlarging $S$ if necessary, we can choose  a closed subset $\mathcal{Z} \subset \mathcal{T}$ 
	of codimension $2$ in $\mathbb{A}^n_{\OO_{k,S}}$
	which satisfies the conditions of Theorem \ref{thm:sparsity}.
	Let us also choose a polynomial $g\in \OO_{k,S}[x_1,\ldots,x_n]$
	which vanishes on $\mathcal{Z}$ and such that $g$ is coprime to $f$, i.e. so that $g=0$ contains no component
	of $T$.

	Theorem \ref{thm:sparsity} then implies that for $\fp \notin S$ and $\x \in \mathbb{A}^n(\OO_{k})$,
	if $\fp \mid f(\x)$, $\fp \nmid g(\x)$ and $\pi^{-1}(\x \bmod \fp) \mbox{ is non-split}$, and if moreover $\x$
	intersects $\mathcal{T}$ transversally above $\fp$, then $\pi^{-1}(\x)$ has no $k_\fp$-point.
	However, if $\fp \| f(\x)$ then a simple local computation shows that $\x$ meets $\mathcal{T}$ transversally above $\fp$, cf.~the argument given in the proof of Proposition 4.3 in \cite[\S6]{BBL15}.
	This completes the proof.
\end{proof}
We now calculate the cardinality of the excluded residues \eqref{def:A(p)}. Let
$$r(\fp) = \#\{t \in \mathcal{T}(\FF_\fp): \pi^{-1}(t) \mbox{ is non-split}\}.$$

\begin{lemma} \label{lem:r}
	We have
	$$|A(\fp)| =  r(\fp) \Norm(\fp)^{n} + O(\Norm(\fp)^{2(n-1)}), \quad \mbox{as } \Norm(\fp) \to \infty.$$
\end{lemma}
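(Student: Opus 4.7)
The plan is to stratify the count of $|A(\fp)|$ according to the reduction $t := \x \bmod \fp \in \FF_\fp^n$ of each residue. Any $\x \in (\OO_\fp/\fp^2)^n$ can be written as $\x = \tilde t + \pi_\fp \y$, where $\pi_\fp$ is a uniformiser of $\OO_\fp$, $\tilde t \in (\OO_\fp/\fp^2)^n$ is a fixed choice of lift of $t$, and $\y \in \FF_\fp^n$. The conditions on non-splitness of $\pi^{-1}(\x \bmod \fp)$ and on $g(\x \bmod \fp) \neq 0$ depend only on $t$; the condition $\fp \| f(\x)$ becomes a condition on $\y$ once $t$ is fixed. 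I would then restrict the outer sum to those $t \in \mathcal{T}(\FF_\fp)$ with $\pi^{-1}(t)$ non-split, $g(t) \neq 0$, and $t$ in the smooth locus of $\mathcal{T}_{\FF_\fp}$, and treat the remaining ``bad'' $t$ by a crude bound.

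For the main term, I would Taylor expand: since $f(t) = 0$, we have $f(\tilde t) \equiv \pi_\fp a \pmod{\fp^2}$ for some $a \in \FF_\fp$, whence
\[
f(\tilde t + \pi_\fp \y) \equiv \pi_\fp\bigl(a + \nabla f(t)\cdot \y\bigr) \pmod{\fp^2}.
\]
The condition $\fp \| f(\x)$ then translates to $a + \nabla f(t)\cdot \y \neq 0$ in $\FF_\fp$. When $t$ is a smooth $\FF_\fp$-point of $\mathcal{T}_{\FF_\fp}$ the gradient $\nabla f(t)$ is nonzero, so this excludes precisely an affine hyperplane, leaving $\Norm(\fp)^n - \Norm(\fp)^{n-1}$ acceptable lifts. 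Summed over smooth $t$ satisfying the other conditions, this gives $r(\fp)\bigl(\Norm(\fp)^n - \Norm(\fp)^{n-1}\bigr)$ up to a further error coming from excluding smoothness etc.

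To control the bad locus, I would use that $f$ is squarefree and $g$ is coprime to $f$. After enlarging $S$ if necessary, we may assume that $\mathcal{T}_{\FF_\fp}$ is generically reduced for every $\fp \notin S$, so its singular locus is a proper closed subset of $\mathcal{T}_{\FF_\fp}$, of dimension $\leq n-2$ over $\FF_\fp$; similarly $\mathcal{T}\cap\{g=0\}$ has codimension at least two in $\mathbb{A}^n_{\OO_{k,S}}$. The Lang--Weil-type bound \eqref{eqn:LW}, applied to each of the finitely many components, then yields $O(\Norm(\fp)^{n-2})$ bad points $t$; bounding the number of lifts of each such $t$ trivially by $\Norm(\fp)^n$ contributes $O(\Norm(\fp)^{2n-2})$ to $|A(\fp)|$. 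Since also $r(\fp)\Norm(\fp)^{n-1} = O(\Norm(\fp)^{2(n-1)})$ by another application of \eqref{eqn:LW}, we arrive at
\[
|A(\fp)| \;=\; r(\fp)\Norm(\fp)^n + O\bigl(\Norm(\fp)^{2(n-1)}\bigr).
\]

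The only subtle point is ensuring that everything works uniformly in $\fp$: one must enlarge $S$ so that the generic-reduction of $\mathcal{T}$, the codimension bound on $\mathcal{T}\cap\{g=0\}$, and the codimension bound on the singular locus of $\mathcal{T}$ all persist under every reduction modulo $\fp \notin S$. This is a standard spreading-out argument using the constructibility of geometric integrality and smoothness; I expect no real difficulty beyond keeping the bookkeeping straight.
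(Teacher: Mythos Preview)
Your proof is correct, but it is organised differently from the paper's. The paper first establishes the uniform bound
\[
\#\{\x \in (\OO_\fp/\fp^2)^n : f(\x) \equiv 0 \bmod \fp^2\} \ll \Norm(\fp)^{2(n-1)},
\]
by homogenising $f$ and invoking \cite[Lem.~6.3]{BBL15}; this lets one replace the condition $\fp \,\|\, f(\x)$ by the weaker $\fp \mid f(\x)$ at a cost of $O(\Norm(\fp)^{2(n-1)})$, and similarly drop the condition on $g$ via \eqref{eqn:LW}, after which every relevant $t \in \mathcal{T}(\FF_\fp)$ has exactly $\Norm(\fp)^n$ lifts and the result is immediate. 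You instead compute the number of lifts with $\fp \,\|\, f(\x)$ directly, via the Taylor expansion at smooth points of $\mathcal{T}_{\FF_\fp}$, obtaining $\Norm(\fp)^n - \Norm(\fp)^{n-1}$ lifts per good point, and handle the singular and $\{g=0\}$ loci separately by the codimension-two bound. Your route is more self-contained (no appeal to the external lemma or the homogenisation trick), while the paper's is a line or two shorter once the cited bound is available; the underlying geometry---separating smooth from singular points of the reduced hypersurface---is of course the same in both.
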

\begin{proof}
	We claim that
	\begin{equation} \label{eqn:f^2}
		\#\{\x \in (\OO_{\fp}/\fp^2)^{n} : f(\x) \equiv 0 \bmod \fp^2\} \ll \Norm(\fp)^{2(n-1)}.
	\end{equation}
	Under the additional assumption that $f$ is homogeneous, this is \cite[Lem.~6.3]{BBL15}.
	To deduce \eqref{eqn:f^2}, we apply \cite[Lem.~6.3]{BBL15} to the homogenisation $F$ of $f$ to find
	that 
	\begin{equation} \label{eqn:F^2}
	 \#\{(\x,y) \in (\OO_{\fp}/\fp^2)^{n+1} : F(\x,y) \equiv 0 \bmod \fp^2\} \ll \Norm(\fp)^{2n},
	\end{equation}
	where $F(\x,1) = f(\x)$. Considering the usual diagonal action of the unit group 
	$(\OO_{\fp}/\fp^2)^*$ and using \eqref{eqn:F^2}, we obtain
	\begin{align*}
		& \#\{\x \in (\OO_{\fp}/\fp^2)^{n} : f(\x) \equiv 0 \bmod \fp^2\} \\
		& = \#(\{(\x,y) \in (\OO_{\fp}/\fp^2)^{n+1} : y \in (\OO_{\fp}/\fp^2)^*, F(\x,y) \equiv 0 \bmod \fp^2\}/(\OO_{\fp}/\fp^2)^*) \\
		& \ll \Norm(\fp)^{2n}/ \#(\OO_{\fp}/\fp^2)^* \\
		& \ll \Norm(\fp)^{2(n-1)},
	\end{align*}
	which proves \eqref{eqn:f^2}. Applying \eqref{eqn:LW} and \eqref{eqn:f^2}, we find that
	\begin{align*}		
	|A(\fp)| 
	 = & \#\{\x \in (\OO_{\fp}/\fp^2)^{n} : f(\x) \equiv 0 \bmod \fp, \pi^{-1}(\x \bmod \fp) \mbox{ non-split}\} \\
	 &\  +O(\Norm(\fp)^{2(n-1)}) \\
	 = & \#\{\x \in \FF_\fp^{n} : f(\x) = 0, \pi^{-1}(\x) \mbox{ is non-split}\}  \Norm(\fp)^{n} 
	 + O(\Norm(\fp)^{2(n-1)}) \\
	 = & r(\fp) \Norm(\fp)^{n} + O(\Norm(\fp)^{2(n-1)}). \qedhere
	\end{align*}
\end{proof}

\subsubsection{Application of the large sieve}
We now apply Proposition \ref{prop:large_sieve} with $m=2$ and 
$$\omega(\fp) = \frac{|A(\fp)|}{\Norm(\fp)^{2n}}.$$
The following lemma gives a lower bound for $L(B)$.

\begin{lemma} \label{lem:L(B)} 
	Let $\Delta(\pi)$ be as in \eqref{def:Delta}. Then
	$$\sum_{\substack{\fa \subset \OO_k \\ \Norm(\fa) \leq B}} |\mu(\fa)|
	\prod_{\fp \mid \fa} \frac{\omega(\fp)}{1 - \omega(\fp)} \gg (\log B)^{\Delta(\pi)}.$$
\end{lemma}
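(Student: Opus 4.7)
The plan is to analyse the Dirichlet series
\[ F(s) \;=\; \prod_{\fp \notin S} \!\left( 1 + \frac{\omega(\fp)}{(1-\omega(\fp))\, N(\fp)^s} \right) \]
attached to the sieve sum, exhibit a singularity of type $s^{-\Delta(\pi)}$ as $s \to 0^+$, and then conclude by a one-sided Tauberian theorem of Hardy--Littlewood--Karamata type. The case $\Delta(\pi) = 0$ is trivial because the summand at $\fa = \OO_k$ already contributes $1$, so assume $\Delta(\pi) > 0$.

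The first step is routine: enlarging $S$ if necessary, Lemma \ref{lem:r} and the Lang--Weil estimate \eqref{eqn:LW} imply $\omega(\fp) \ll N(\fp)^{-1}$, so one may assume $\omega(\fp) \leq 1/2$ for all $\fp \notin S$. Expanding the logarithm of the Euler product, then substituting $\omega(\fp) = r(\fp)N(\fp)^{-n} + O(N(\fp)^{-2})$ from Lemma \ref{lem:r} and absorbing into an $O(1)$ error both the quadratic terms in the log expansion and the $O(N(\fp)^{-2})$ error (each giving a Dirichlet series convergent on $\mathrm{Re}(s) > -1/2$), yields
\[ \log F(s) = \sum_{\fp \notin S} \frac{r(\fp)}{N(\fp)^{s + n}} + O(1) \qquad \text{for } \mathrm{Re}(s) \geq 0. \]

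The heart of the argument is to establish
\begin{equation} \label{plan:target}
\sum_{\fp \notin S} \frac{r(\fp)}{N(\fp)^{s + n}} = \Delta(\pi) \log(1/s) + O(1) \quad \text{as } s \to 0^+.
\end{equation}
Write $\mathcal{T} = \bigcup_D \mathcal{T}_D$ as the union of the closures of the codimension-one components $D$ of $T$. Inclusion--exclusion gives $r(\fp) = \sum_D r_D(\fp) + O(\#\bigcup_{D_1 \ne D_2}(\mathcal{T}_{D_1} \cap \mathcal{T}_{D_2})(\FF_\fp))$, where $r_D(\fp) := \#\{ t \in \mathcal{T}_D(\FF_\fp) : \pi^{-1}(t) \text{ non-split} \}$; each pairwise intersection has codimension at least two in $\mathcal{Y}$, so its $\FF_\fp$-points number $O(N(\fp)^{n-2})$ by \eqref{eqn:LW}, contributing a Dirichlet series convergent on $\mathrm{Re}(s) > -1$. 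For a single $D$, the $\FF_\fp$-points of $\mathcal{T}_D$ correspond to closed points $t \in \underline{\mathcal{T}_D}$ of residue degree one over $\OO_k$, and those with non-split fibre form the degree-one part of the frobenian set
\[ F_D = \{ t \in \underline{\mathcal{T}_D} : \pi^{-1}(t) \text{ is non-split} \} \]
of mean $1 - \delta_D(\pi)$ furnished by Proposition \ref{prop:is_frob}. The estimate \eqref{eqn:large_degree} shows that closed points of $\mathcal{T}_D$ of degree greater than one contribute a Dirichlet series absolutely convergent on $\mathrm{Re}(s) > -1/2$; applying Lemma \ref{lem:frob_zeta} to $F_D$ (noting $\dim T_D = n - 1$, so the logarithmic singularity is placed at the point $n$) and using the standard log expansion $\log \zeta_{F_D}(u) = \sum_{t \in F_D} N(t)^{-u} + O(1)$ on $\mathrm{Re}(u) > n/2$ gives
\[ \sum_{\fp \notin S} \frac{r_D(\fp)}{N(\fp)^{s + n}} = (1 - \delta_D(\pi)) \log(1/s) + O(1) \quad \text{as } s \to 0^+. \]
Summing over $D$ and invoking Proposition \ref{prop:Delta} produces \eqref{plan:target}.

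Consequently $F(s) \gg s^{-\Delta(\pi)}$ as $s \to 0^+$. Since $F$ has non-negative Dirichlet coefficients and converges on $\mathrm{Re}(s) > 0$, the substitution $y = \log B$ identifies $F(s)/s$ with the Laplace transform of the non-decreasing function $y \mapsto L(e^y)$; the one-sided Karamata Tauberian theorem then delivers $L(B) \gg (\log B)^{\Delta(\pi)}$. The main obstacle lies in the middle step: one must correctly pin down the singular behaviour at $s = 0$, extracting exactly the frobenian contribution of each codimension-one component via Lemma \ref{lem:frob_zeta} while simultaneously controlling higher-codimension intersections, closed points of large degree, and the tail of the logarithmic expansion.
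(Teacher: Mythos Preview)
Your approach is essentially the paper's: form the Euler product attached to the sieve sum, use Lemma~\ref{lem:r} to pass to $r(\fp)$, decompose over the codimension-one components $\mathcal{T}_D$, identify each piece via Proposition~\ref{prop:is_frob} and Lemma~\ref{lem:frob_zeta}, sum using Proposition~\ref{prop:Delta}, and finish with a Tauberian theorem. The paper differs only in execution: it first discards the factor $(1-\omega(\fp))^{-1}$ (using $\omega/(1-\omega)\geq\omega$) and then keeps track of the full holomorphic factorisation $\Psi(s)=G(s)\prod_D\zeta_{F_D}(s+n)$ rather than just an $O(1)$ in the logarithm, obtaining the precise behaviour $\Psi(s)\sim c\,s^{-\Delta(\pi)}$ together with holomorphic continuation to $\re s\geq 0$; this is exactly the input to \cite[\S II.7.3, Thm.~8]{Ten95}, and so the paper actually proves the asymptotic~\eqref{eqn:asym} rather than merely the lower bound.

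One caution on your last step. You record only $F(s)\gg s^{-\Delta(\pi)}$ and invoke a ``one-sided Karamata Tauberian theorem'', but a lower bound on a Laplace transform does not by itself force the matching lower bound on a monotone summatory function; the usual Karamata theorem needs regular variation, not just a one-sided estimate. What saves you is that your analysis in fact yields $\log F(s)=\Delta(\pi)\log(1/s)+O(1)$, hence the \emph{two-sided} bound $F(s)\asymp s^{-\Delta(\pi)}$. With both bounds in hand, the upper bound gives $L(e^y)\ll y^{\Delta(\pi)}$ by the trivial Chebyshev estimate, and then taking $s=\lambda/y$ with $\lambda$ large and using this to control the tail $\int_y^\infty e^{-\lambda u/y}\,dL(e^u)$ converts the lower bound on $F$ into $L(e^y)\gg y^{\Delta(\pi)}$. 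You should either phrase the Tauberian step this way, or follow the paper in establishing the sharper asymptotic and quoting a standard Tauberian theorem directly.
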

\begin{proof}
	As $\omega(\fp) \leq 1$, it suffices to show that 
	\begin{equation} \label{eqn:asym}
	\sum_{\substack{\fa \subset \OO_k \\ \Norm(\fa) \leq B}} |\mu(\fa)|
	\prod_{\fp \mid \fa} \omega(\fp) \sim C(\log B)^{\Delta(\pi)},
	\end{equation}
	for some $C>0$. To do this, consider the associated Dirichlet series
	\begin{equation*} \label{def:Dirichlet}
		\Psi(s)	= \sum_{\fa \subset \OO_k } \frac{|\mu(\fa)|\prod_{\fp \mid \fa} \omega(\fp)}{\Norm(\fp)^s}
		=\prod_\fp\left(1 + \frac{\omega(\fp)}{\Norm(\fp)^s}\right),
		\quad \re s > 0.
	\end{equation*}
	By Lemma \ref{lem:r} we have
	\begin{equation} \label{eqn:Psi_r}
	\Psi(s) =  g(s)\prod_\fp\left(1 + \frac{r(\fp)}{\Norm(\fp)^{s + n}}\right),
	\end{equation}
	where $g(s)$ is holomorphic on $\re s > -1/2$.
	To continue, for each $D \in T^{(0)}$ let $T_D$ (resp.~$\mathcal{T_D}$) 
	denote the closure of $D$ in $T$ (resp.~$\mathcal{T}$).
	Let
	$$F_D = \{t \in \underline{\mathcal{T_D}}: \pi^{-1}(t) \mbox{ is non-split}\},
	\quad r_D(\fp) = \#\{t \in F_D \cap \mathcal{T_D}(\FF_\fp)\}.$$
	Note that $F_D$ is frobenian of density
	$1-\delta_D(\pi)$ by Proposition \ref{prop:is_frob}.
	By \eqref{eqn:LW}, the contribution from the intersection of any $2$ 
	of the  $\mathcal{T_D}$ is negligible. Hence from \eqref{eqn:Psi_r} we
	obtain
	\begin{align*}
	\log \Psi(s)
	& = \sum_\fp \frac{r(\fp)}{\Norm(\fp)^{s + n}}
	+ g_1(s)\\
	& = \sum_{D \in T^{(0)}} \sum_\fp  \frac{r_D(\fp)}{\Norm(\fp)^{s + n}}
	+ g_2(s) \\
	& = \sum_{D \in T^{(0)}} \sum_{\substack{t \in F_D \cap \mathcal{T}_D(\FF_\fp)}}
	\frac{1}{\Norm(t)^{s + n}}
	+ g_2(s),
	\end{align*}
	where each $g_i$ is holomorphic on $\re s > -1/2$.
	Since the contribution of closed points of degree greater than $1$ is negligible \eqref{eqn:large_degree},
	on exponentiating  we find 
	$$\Psi(s) = G(s) \prod_{\mathclap{D \in T^{(0)}}} \zeta_{F_D}(s+n), \quad \re s > 0, $$
	where $G(s)$ is holomorphic and non-zero on $\re s > -1/2$, and $\zeta_{F_D}$ is as in Lemma \ref{lem:frob_zeta}
	(note that $\dim T_D = n-1$).
	Hence from Lemma \ref{lem:frob_zeta} and Proposition \ref{prop:Delta} we find that
	$\Psi(s)$ has a holomorphic continuation to $\re s \geq 0$,
	apart from at $s = 0$ where 
	$$\Psi(s) \sim \frac{c}{s^{\Delta(\pi)}} \quad \mbox{for some } c>0, \quad \mbox{as } s \to 0.$$
	A Tauberian theorem (e.g.~\cite[\S II.7.3, Thm.~8]{Ten95})
	yields \eqref{eqn:asym}, as required.
\end{proof}

Using Proposition \ref{prop:upper_bound} and Lemma \ref{lem:L(B)}, the large sieve gives the upper bound required for
Theorem \ref{thm:non-split_integral}. Recalling Proposition \ref{prop:Delta} completes
the proof.
\qed

\subsection{Proof of Theorem \ref{thm:non-split}}
We prove the result using Theorem \ref{thm:non-split_integral}. Let $\pi: X \to \PP^n_k$ be as in Theorem \ref{thm:non-split}.
Let $X_c$ be the variety obtained as the fibre product
$$
\xymatrix{
X_c \ar[d] \ar[r]^{\pi_c\,\,\,\,\,\,\,\,\,\,\,}&	\mathbb{A}_k^{n+1}\setminus \{0\} \ar[d]^\psi \\
X \ar[r]^{\pi}& \PP^n_k.}
$$
where $\psi$ is the obvious map.
The morphism $\pi_c$ is still proper, but in order to apply Theorem \ref{thm:non-split_integral} the base needs to be
$\mathbb{A}_k^{n+1}$. We therefore choose an open immersion $X_c \subset \widetilde{X}$ where $\widetilde{X}$ is a smooth, integral $k$-variety equipped with a proper map 
$\widetilde{\pi}: \widetilde{X} \to \mathbb{A}_k^{n+1}$ such that $\widetilde{\pi}|_{X_c} = \pi_c$; this is possible 
by Nagata's compactification theorem and Hironaka's theorem. The diagram
$$
\xymatrix{
X_c \ar@{^{(}->}[d] \ar[r]^{\pi_c\,\,\,\,\,\,\,\,\,\,}&	\mathbb{A}_k^{n+1}\setminus \{0\} \ar@{^{(}->}[d] \\
\widetilde{X} \ar[r]^{\widetilde{\pi}}& \mathbb{A}_k^{n+1}.}
$$
commutes. For any $P \in \mathbb{A}_k^{n+1} \setminus \{0\}$, we have an isomorphism
\begin{equation} \label{eqn:fibres_agree}
	\widetilde{\pi}^{-1}(P) \cong \pi^{-1}(\psi(P)) \otimes_{\kappa(\psi(P))} \kappa(P).
\end{equation}
It follows that for all non-zero $\x \in  \mathbb{A}_k^{n+1}(k)$ we have
\begin{equation} \label{eqn:pi=pi}
    \psi(\x) \in \pi(X(\Adele_k)) \iff \x \in \widetilde{\pi}(\widetilde{X}(\Adele_k)).
\end{equation}
We next compare the $\Delta$-invariants.

\begin{lemma} \label{lem:Delta=Delta}
	In the above notation we have
	\begin{equation} 
	\Delta(\pi) = \Delta(\widetilde{\pi}).
\end{equation}
\end{lemma}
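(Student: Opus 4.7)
The plan is to use Proposition \ref{prop:Delta} on both sides and reduce the identity to a correspondence between the contributing codimension-one points. By that proposition,
$$\Delta(\pi) = \sum_{D \in (\PP^n_k)^{(1)}} (1-\delta_D(\pi)), \qquad \Delta(\widetilde{\pi}) = \sum_{\widetilde{D} \in (\mathbb{A}_k^{n+1})^{(1)}} (1-\delta_{\widetilde{D}}(\widetilde{\pi})).$$
The case $n=0$ is immediate, so I henceforth assume $n \geq 1$, in which case every codimension-one irreducible subvariety $\widetilde{D} \subset \mathbb{A}_k^{n+1}$ is a hypersurface not equal to $\{0\}$, and in particular $\eta_{\widetilde{D}} \in \mathbb{A}_k^{n+1}\setminus\{0\}$ so that $\psi(\eta_{\widetilde{D}})$ is well defined.

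Next, I would classify such $\widetilde{D}$ according to whether they are $\Gm$-invariant. If $\widetilde{D}$ is a cone, then $\widetilde{D} = \overline{\psi^{-1}(D)}$ for a unique $D \in (\PP^n_k)^{(1)}$, since $\psi$ is a $\Gm$-torsor and pullbacks of irreducibles remain irreducible. Otherwise $\widetilde{D}$ is not $\Gm$-invariant, hence intersects a generic $\Gm$-orbit in finitely many points, so $\psi$ maps $\widetilde{D}$ dominantly onto $\PP^n_k$ and $\psi(\eta_{\widetilde{D}})$ is the generic point of $\PP^n_k$. In this second case, \eqref{eqn:fibres_agree} identifies $\widetilde{\pi}^{-1}(\eta_{\widetilde{D}})$ with a field extension of the geometrically integral generic fibre of $\pi$, which remains geometrically integral; thus $\delta_{\widetilde{D}}(\widetilde{\pi}) = 1$ and such $\widetilde{D}$ contribute nothing to $\Delta(\widetilde{\pi})$.

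For the cone case, since $\psi^{-1}(D) \to D$ is Zariski-locally a trivial $\Gm$-bundle, we have $\kappa(\eta_{\widetilde{D}}) \cong \kappa(D)(t)$, and \eqref{eqn:fibres_agree} gives
$$\widetilde{\pi}^{-1}(\eta_{\widetilde{D}}) \cong \pi^{-1}(D) \otimes_{\kappa(D)} \kappa(D)(t).$$
The crux is then to show that $\delta$ is preserved under the purely transcendental extension $\kappa(D)(t)/\kappa(D)$. For this, let $L/\kappa(D)$ be a splitting field of the multiplicity-one geometric components of $\pi^{-1}(D)$, with Galois group $\Gamma_D(\pi)$. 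Since $\kappa(D)(t)/\kappa(D)$ is regular, $L \otimes_{\kappa(D)} \kappa(D)(t) = L(t)$ is a field, Galois over $\kappa(D)(t)$ with Galois group canonically identified with $\Gamma_D(\pi)$. Moreover any integral scheme over the algebraically closed field $\overline{\kappa(D)}$ stays integral under further extension of the base field, and flat base change preserves multiplicities along generic points; hence the $\Gamma_D(\pi)$-sets $I_D(\pi)$ and $I_{\widetilde{D}}(\widetilde{\pi})$ are canonically isomorphic, giving $\delta_{\widetilde{D}}(\widetilde{\pi}) = \delta_D(\pi)$.

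Combining these steps, the non-cone divisors contribute zero while the cones match the summands of $\Delta(\pi)$ bijectively, yielding $\Delta(\widetilde{\pi}) = \Delta(\pi)$. The main obstacle is the invariance of $\delta$ under purely transcendental base change, which requires a careful check that neither the set of geometric components of multiplicity one nor the Galois action on it changes under the extension $\kappa(D)(t)/\kappa(D)$; the remainder is a straightforward classification of hypersurfaces in $\mathbb{A}_k^{n+1}$ relative to the projection $\psi$.
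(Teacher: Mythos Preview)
Your proof is correct and follows essentially the same approach as the paper: both apply Proposition~\ref{prop:Delta} and split the codimension-one points of $\mathbb{A}_k^{n+1}$ according to whether $\psi$ carries them to the generic point of $\PP^n_k$ (your ``non-cones'', the paper's Type~(I)) or to a codimension-one point (your ``cones'', Type~(II)). The only substantive difference is that you spell out in detail why $\delta$ is unchanged under the purely transcendental extension $\kappa(D)(t)/\kappa(D)$, whereas the paper simply asserts $\delta_D(\widetilde{\pi}) = \delta_{\psi(D)}(\pi)$ from \eqref{eqn:fibres_agree} and the identification $\kappa(D) = \kappa(\psi(D))(t)$.
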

\begin{proof}
	There are two types of codimension one points $D$ in $\mathbb{A}_k^{n+1}$:
	\begin{enumerate}
		\item[(I)] $\psi(D)$ is the generic point $\eta$ of $\PP_k^{n}$,
		\item[(II)]	$\psi(D)$ has codimension $1$ in $\PP_k^{n}$.
	\end{enumerate}
	
	Since the generic fibre of $\pi$ is geometrically integral, using \eqref{eqn:fibres_agree} we see that $\widetilde{\pi}^{-1}(D)$ is also geometrically integral for points $D$ of type (I). Hence
	$$\delta_D(\widetilde{\pi}) = 1$$ for such points. 
	For a point $D$ of type (II), we have 
	$$\kappa(D) = \kappa(\psi(D))(t)$$
	where $t$ is purely transcendental over $\kappa(D)$, and hence (again by \eqref{eqn:fibres_agree})
	$$\delta_D(\widetilde{\pi}) = \delta_{\psi(D)}(\pi).$$
	The result then follows from Proposition \ref{prop:Delta}, since
	$\psi$ induces a bijection between $(\PP^n_k)^{(1)}$ and the set of codimension
	one points of $\mathbb{A}_k^{n+1}$ of type (II).
\end{proof}
Consider now the counting function $N_{\mathrm{loc}}(\widetilde{\pi},B)$ from \eqref{def:Nloc_affine},
for some choice of norm $\|\cdot \|$.
By the proposition in \cite[\S13.4]{Ser97b}, there exists a constant $C > 0$
such that every point $x \in \PP^n(k)$ has a choice of coordinates $\x \in \OO_k^{n+1}$ 
with
$$||\x|| \leq CH(x).$$
It follows from this and \eqref{eqn:pi=pi} that
\begin{equation*} 
	N_{\mathrm{loc}}(\pi,B) \leq N_{\mathrm{loc}}(\widetilde{\pi},CB).
\end{equation*}
Applying Theorem \ref{thm:non-split_integral} and recalling Lemma \ref{lem:Delta=Delta} completes the proof. \qed

\subsection{Proof of Theorem \ref{thm:single_fibre}}
This follows immediately from
Theorem \ref{thm:non-split} and Lemma \ref{lem:Galois}. \qed

\section{Examples and applications} \label{Sec:examples}

The aim of this section is to illustrate our results with a few examples and applications. 

\subsection{Preliminaries}
Let us gather some preliminary definitions and results.

\begin{definition} \label{def:goodmodels} 
Let $k$ be a field. If $\pi: X \to \PP^n_k$ is a dominant morphism of $k$-varieties with smooth, geometrically integral generic fibre, we say that $\psi: Y \to \PP^n_k$ is a \emph{good compactification} of $\pi$ if $\psi$ is proper, $Y$ is smooth over $k$, and the generic fibre of $\psi$ contains the generic fibre of $\pi$ as an open subvariety. 
\end{definition}

Good compactifications always exist if $k$ is a field of characteristic zero: this follows from Nagata's compactification theorem and resolution of singularities. 

\begin{lemma} \label{lem:independence} Let $k$ be a number field. Let $\pi: X \to \PP^n_k$ and $\psi: Y \to \PP^n_k$ be as in Definition \ref{def:goodmodels}. Then the sets $\pi(X(\Adele_k)) \cap \PP^n(k)$ and $\psi(Y(\Adele_k)) \cap \PP^n(k)$ differ by a non-Zariski-dense set of rational points. In particular, we have 
\begin{equation} \label{eqn:equal}
 N_{\mathrm{loc}}(\pi,B) = N_{\mathrm{loc}}(\psi,B) + O(B^{n+1/2 + \varepsilon}) 
\end{equation} 
 for any $\varepsilon > 0$.
\end{lemma}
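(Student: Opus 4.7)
The bound $N_{\mathrm{loc}}(\pi,B)=N_{\mathrm{loc}}(\psi,B)+O(B^{n+1/2+\varepsilon})$ will follow immediately from the first assertion together with Schanuel's theorem, since any proper Zariski-closed subset $Z\subsetneq\PP^n_k$ satisfies $\#\{x\in Z(k):H(x)\leq B\}=O(B^n)$, which is absorbed into the claimed error. So the plan is to show that
$$\Sigma:=\bigl(\pi(X(\Adele_k))\cap\PP^n(k)\bigr)\triangle\bigl(\psi(Y(\Adele_k))\cap\PP^n(k)\bigr)$$
is not Zariski-dense in $\PP^n_k$.

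First I would spread out the given open immersion of generic fibres to a dense open subset of $\PP^n_k$. The open immersion $X_\eta\hookrightarrow Y_\eta$ coming from the definition of a good compactification yields a rational map $X\dashrightarrow Y$ over $\PP^n_k$ whose domain of definition contains $X_\eta$, so over some dense open $U\subseteq\PP^n_k$ it restricts to a morphism $\pi^{-1}(U)\hookrightarrow\psi^{-1}(U)$, which after further shrinking of $U$ I may assume to be an open immersion. Shrinking $U$ once more using generic smoothness (valid in characteristic zero) and constructibility of geometric integrality, I may also assume $\psi^{-1}(U)\to U$ is smooth and proper with geometrically integral fibres, and that in every such fibre the complement $\psi^{-1}(U)\setminus\pi^{-1}(U)$ has strictly smaller dimension. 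Fix integral models $\mathcal{X}\hookrightarrow\mathcal{Y}$ over $\OO_{k,S}$ extending this open immersion for some finite set $S$ of primes.

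The key claim, which I would then prove, is that for every $x\in U(k)$ one has $x\in\pi(X(\Adele_k))$ if and only if $x\in\psi(Y(\Adele_k))$; this forces $\Sigma\subseteq(\PP^n_k\setminus U)(k)$, which is non-Zariski-dense. The forward direction is formal from the open immersion at the level of integral models. For the converse, given an adelic point $(P_v)_v\in\psi^{-1}(x)(\Adele_k)$, I would produce an adelic point of $\pi^{-1}(x)$ by residue-disk perturbation: by smoothness of $\psi^{-1}(x)$ and the implicit function theorem, $\psi^{-1}(x)(k_v)$ is locally a full-dimensional $k_v$-analytic manifold around each $P_v$, while $(\psi^{-1}(x)\setminus\pi^{-1}(x))(k_v)$ is a proper closed analytic subset of strictly smaller dimension, hence nowhere dense; at almost every finite $v$, Lang--Weil applied to the special fibre $\mathcal{Y}_{x,\FF_v}$ shows that $P_v$ already reduces into $\mathcal{X}_{x,\FF_v}$ and so lies in $\mathcal{X}_x(\OO_v)$ without modification.

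The main obstacle I would have to handle carefully is maintaining the integrality condition of the restricted-product adelic topology at the finitely many finite places where perturbation is required. This amounts to showing that the residue disk of $\bar P_v$, identified with $\OO_v^{\dim\psi^{-1}(x)}$ by Hensel, contains a non-empty open subset lying in $\mathcal{X}_x(\OO_v)$; this follows because $(\mathcal{Y}_x\setminus\mathcal{X}_x)(\OO_v)$ is a proper closed analytic subset of $\mathcal{Y}_x(\OO_v)$ of strictly smaller dimension and $\mathcal{X}_x\hookrightarrow\mathcal{Y}_x$ is an open immersion, so a sufficiently small perturbation of $P_v$ lands in $\mathcal{X}_x(\OO_v)$.
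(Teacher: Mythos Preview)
Your overall strategy matches the paper's: restrict to a dense open $U\subseteq\PP^n_k$ over which $\pi^{-1}(U)$ is an open subscheme of the smooth family $\psi^{-1}(U)$, show that for $x\in U(k)$ the fibres $\pi^{-1}(x)$ and $\psi^{-1}(x)$ have adelic points simultaneously, and bound the complement. However, there is a genuine slip in your converse direction. You assert that at almost every finite $v$, Lang--Weil applied to $\mathcal{Y}_{x,\FF_v}$ shows that the \emph{given} $P_v$ already reduces into $\mathcal{X}_{x,\FF_v}$. Lang--Weil is a point-count and says nothing about where a particular point lands: take $\pi^{-1}(x)=\mathbb{A}^1\subset\psi^{-1}(x)=\PP^1$ with $P_v=\infty$ for every $v$; this adelic point of $\psi^{-1}(x)$ never reduces into $\mathcal{X}_x$ at any place. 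Consequently your ``main obstacle'' is also misidentified: integrality is a problem at \emph{infinitely} many places, not finitely many.

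The fix is to abandon perturbation of $(P_v)$ and build an adelic point of $\pi^{-1}(x)$ from scratch. You have arranged $\pi^{-1}(x)$ to be smooth and geometrically integral, so Lang--Weil plus Hensel give $\mathcal{X}_x(\OO_v)\neq\emptyset$ for almost all $v$ directly, independently of $(P_v)$. At the remaining finitely many places your implicit-function argument already shows $\pi^{-1}(x)(k_v)\neq\emptyset$ from $\psi^{-1}(x)(k_v)\neq\emptyset$, and there no integrality is needed in the restricted product.

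The paper compresses all of this into a single observation: on a smooth, connected $k_v$-variety the set of rational points is either empty or Zariski dense, hence $\pi^{-1}(x)(k_v)\neq\emptyset\Leftrightarrow\psi^{-1}(x)(k_v)\neq\emptyset$ for every $v$; the passage from place-by-place solubility to $\pi^{-1}(x)(\Adele_k)\neq\emptyset$ is then the standard fact for smooth geometrically integral varieties. Your integral models and residue-disk analysis recover this mechanism, but become unnecessary once the Zariski-density statement is invoked.
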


\begin{proof} There exists an open subset $U \subseteq \PP^n_k$ such that the induced morphism $\psi^{-1}(U) \to U$ is smooth, and $\pi^{-1}(U)$  is an open subset of $\psi^{-1}(U)$. If $x \in U(k)$ is a rational point and $v$ is a place of $k$, then $\psi^{-1}(x)$ has a $k_v$-point if and only if $\pi^{-1}(x)$ has a $k_v$-point; indeed, on a smooth, connected $k_v$-variety, the set of rational points is either empty or Zariski dense. This proves the first statement. The claim \eqref{eqn:equal} then follows from \cite[Thm.~13.1.3]{Ser97b}.  \end{proof}

This lemma is important for applications: often, one wants to study  a family of varieties given by an explicit set of affine equations. 
However, the morphism $\pi$ in Theorem \ref{thm:non-split} is assumed to be proper; this assumption is necessary, cf. Example \ref{Ex:proper}. Hence one needs a good compactification of $\pi$ to apply the result. Lemma \ref{lem:delta_birational} and Lemma \ref{lem:independence} imply that the bounds provided by Theorem \ref{thm:non-split} are independent of the choice of such a compactification. It is not always necessary to construct a good compactification explicitly in order to apply Theorem \ref{thm:non-split}; see e.g. \S \ref{sec:tori} for a case where this can be avoided.

Before listing examples and applications, we will present a non-example, showing that the smoothness hypothesis in Theorems \ref{thm:single_fibre} and \ref{thm:non-split} is crucial:

\begin{example}[Necessity of assumptions, I] Consider the conic bundle $$\pi: X \to \mathbb{A}^1_\QQ = \mathrm{Spec}\,\QQ[t]$$ given by $$x^2 + y^2 = t^2z^2.$$ Even though the fibre over $t = 0$ is irreducible but not geometrically integral, there is a section, so the conclusion of Theorem \ref{thm:single_fibre} does not hold here; the issue is that total space of this family is singular at $t= 0$ and $(x,y,z) = (0,0,1)$.
\end{example}

\subsection{Torsors under multinorm tori} \label{sec:tori}
Our first application of Theorem \ref{thm:non-split} concerns families of torsors under multinorm tori. 
Let $k$ be a number field and let $E/k$ be a finite \'{e}tale $k$-algebra.
Consider the family $$\pi: X \to \mathbb{A}^1_k = \mathrm{Spec}\,k[t]$$ given by
\begin{equation} \label{multinorm} 
\Norm_{E/k} (\x) = t.
\end{equation} 
Away from $t = 0$, the fibres are torsors under the multinorm torus $R^{1}_{E/k}\GG_m$. In order to apply Theorem \ref{thm:non-split} and to calculate the $\delta$-invariants, we need a good compactification as in Definition \ref{def:goodmodels}. However, it is not necessary to construct such a compactification explicitly; the following lemma suffices:

\begin{lemma}
\label{lem:nonsplit} Let $K$ be a field of characteristic zero. Let $E = \prod_{i = 1}^r K_i$ be an \'etale $K$-algebra, where the $K_i/K$ are finite field extensions of degree $n_i$. Let $X$ be the $K(\!(t)\!)$-variety given by the equation $$\Norm_{E/K}(\mathbf{x}) = t^m, \ \ \text{for some}\ m \in \mathbb{Z}.$$
Let $\mathcal{X}$ be a regular scheme equipped with a proper morphism $\psi: \mathcal{X} \to \mathrm{Spec}\,K[\![t]\!]$ whose generic fibre is smooth, geometrically integral, and contains $X$ as an open subscheme. Then the special fibre $\mathcal{X}_s$ is split if and only if $$\gcd(n_1,\ldots,n_r) \mid m.$$ 
\end{lemma}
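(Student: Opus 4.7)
The plan is to handle both directions by reducing the splitness question to an analysis of the $t$-adic valuation on the unramified extensions $K_i(\!(t)\!)/K(\!(t)\!)$, exploiting the formula $v(\Norm_{K_i(\!(t)\!)/K(\!(t)\!)}(x)) = n_i \cdot v(x)$.

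For $(\Leftarrow)$: suppose $d := \gcd(n_1,\ldots,n_r)$ divides $m$, and write $d = \sum_i a_i n_i$ with $a_i \in \ZZ$ and $m = kd$. Setting $x_i := t^{ka_i} \in K_i(\!(t)\!)^\times$ gives $\Norm_{K_i/K}(x_i) = t^{ka_i n_i}$, and hence $\Norm_{E/K}(\mathbf{x}) = t^{k\sum_i a_i n_i} = t^m$, exhibiting a $K(\!(t)\!)$-point of $X$. By the valuative criterion of properness applied to $\psi$, this extends to a $K[\![t]\!]$-point of $\mathcal{X}$; since $\mathcal{X}$ is regular, this point specialises into the smooth locus of $\mathcal{X}_s$ by \cite[Prop.~3.1.2]{BLR90} (as used in the proof of Theorem~\ref{thm:sparsity}), so Lemma~\ref{lem:smooth_point} shows that $\mathcal{X}_s$ is split.

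For $(\Rightarrow)$: suppose $\mathcal{X}_s$ is split and fix a geometrically integral open $U \subseteq \mathcal{X}_s$, necessarily contained in a unique irreducible component $C$ of $\mathcal{X}_s$; since $U$ is reduced, $C$ has multiplicity $1$. Regularity of $\mathcal{X}$ then makes $A := \mathcal{O}_{\mathcal{X},\eta_C}$ a DVR of rank one with uniformiser $t$ and residue field $K(C)$. Because $K$ has characteristic zero, Cohen's structure theorem furnishes a $K[\![t]\!]$-algebra isomorphism $\widehat{A} \cong K(C)[\![t]\!]$ sending $t$ to $t$. The induced morphism $\Spec K(C)[\![t]\!] \to \mathcal{X}$ takes the generic point of its source to the generic point of $\mathcal{X}$, which lies in $X$ because $X$ is open dense in the generic fibre $Y$ of $\psi$. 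This yields a $K(C)(\!(t)\!)$-point $\mathbf{x} = (x_i)$ of $X$ satisfying $\prod_i \Norm_{K_i/K}(x_i) = t^m$ in $K(C)(\!(t)\!)$. Geometric integrality of $C$ forces $K$ to be algebraically closed in $K(C)$, so each $K_i \otimes_K K(C)$ is a field of degree $n_i$ over $K(C)$, whence $L_i := (K_i \otimes_K K(C))(\!(t)\!)$ is an unramified extension of $K(C)(\!(t)\!)$ of degree $n_i$. Writing $v$ for the $t$-adic valuation, the standard formula for unramified extensions gives $v(\Norm_{L_i/K(C)(\!(t)\!)}(x_i)) = n_i \cdot v_{L_i}(x_i) \in n_i \ZZ$, so $m = v(t^m) = \sum_i v(\Norm(x_i)) \in \sum_i n_i \ZZ = d\ZZ$, proving $d \mid m$.

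The delicate step is the extraction of the $K(C)(\!(t)\!)$-point of $X$ from the mere splitness of $\mathcal{X}_s$: this combines the regularity of $\mathcal{X}$ (to identify $\mathcal{O}_{\mathcal{X},\eta_C}$ as a DVR with $t$ as uniformiser), the characteristic-zero hypothesis (to apply Cohen's theorem and choose a coefficient field containing the image of $K$), and the verification that the generic fibre of the resulting section lands in $X$ rather than in the auxiliary locus $Y \setminus X$ that was added during compactification.
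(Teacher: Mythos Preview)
Your proof is correct and rests on the same mechanism as the paper's: exhibit a point of $X$ over an unramified Laurent-series extension of $K(\!(t)\!)$ whose residue field has $K$ algebraically closed in it, then take $t$-adic valuations of the norm equation. The paper obtains this point by citing \cite[Lem.~2.2]{Sko96}, which packages the equivalence between splitness of $\mathcal{X}_s$ and the existence of an unramified complete extension $A$ of $K[\![t]\!]$ with $\mathcal{X}(A)\neq\emptyset$; because that black box does not say \emph{which} $A$-point one gets, the paper then invokes the fact that $K(A)$ is a \emph{large} field (in Pop's sense) to move a $K(A)$-point of the compactified generic fibre $Y$ into the dense open $X$. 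You instead unfold Skorobogatov's lemma by hand---localise at $\eta_C$, complete, apply Cohen with a coefficient field containing $K$---and thereby see directly that the resulting $K(C)(\!(t)\!)$-point hits the generic point of $\mathcal{X}$, hence already lies in $X$. This is a genuine simplification: it removes the appeal to large fields entirely. The $(\Leftarrow)$ direction is the same in both proofs, the paper again citing \cite[Lem.~2.2(b)]{Sko96} for what you do explicitly via properness, regularity, and Lemma~\ref{lem:smooth_point}.
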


\begin{proof} 
We prove the result using \cite[Lem.~2.2]{Sko96}.

If $\gcd(n_1,\ldots,n_r) \mid m$, then $\psi$ has a section. Indeed, pick $s_1,\ldots,s_r \in \ZZ$ with $n_1s_1 + \cdots + n_r s_r = m$. Then a section is given by  $t \mapsto (t^{s_1},\ldots,t^{s_r})$. As $\mathcal{X}$ is regular and $\pi$ is proper, the special fibre is split by \cite[Lem.~2.2(b)]{Sko96}.

Conversely, assume that $\mathcal{X}_s$ is split. Then %\cite[Prop.~3.8]{CIME} 
\cite[Lem.~2.2(a)]{Sko96} implies that there is an unramified extension of discrete valuation rings $K[\![t]\!] \subseteq A$ such that $A$ is complete, $K$ is algebraically closed in the residue field of $A$, and $\mathcal{X}(A) \neq \emptyset$.
Hence the generic fibre of $\mathcal{X}$ has a $K(A)$-point.

Recall that $K(A)$ is a so-called \emph{large field} by \cite[\S1.A.2]{Pop14}: this means that every irreducible curve over $K(A)$ with a smooth rational point has infinitely many rational points. The generic fibre of $K(A)$ is smooth and irreducible; hence, as $K(A)$ is large, the rational points are Zariski dense by \cite[Prop.~2.6]{Pop14}. In particular the affine patch of the generic fibre given by 
$$\Norm_{E/K}(\mathbf{x}) = \prod_{i=1}^r \Norm_{K_i/K} (\mathbf{x}_i) = t^m$$ has a $K(A)$-point $(z_1,\ldots,z_r)$.  Taking valuations yields $$\sum_{i = 1}^r n_i v_A(z_i) = v_A(t^m) = m.$$ Hence $\gcd(n_1,\ldots,n_r) \mid m$, as required.
\end{proof}

We now apply Theorem \ref{thm:non-split} to the family \eqref{multinorm}. Define $\delta_{E/k}$ to be the density
\begin{equation}
	\delta_{E/k} = \dens\left(\left\{\fp \in \Spec \OO_k \left|
	\,\,\,\,\,\, \gcd_{ \mathclap{k_\fp \subset L_\fp \subset E_\fp}} \,\, [L_\fp : k_\fp]   = 1 \right. \right\}\right).
\end{equation} Here $E_\fp = E \otimes_k k_\fp$, and the greatest common divisor is taken over all subfields $L_\fp$ of $E_\fp$ containing $k_\fp$.
That this density exists follows from Chebotarev's density theorem \eqref{eqn:Cheb}, as will be clear from the proof of the following result:

\begin{theorem} \label{thm:norm}
	Consider the family $\pi$ given by (\ref{multinorm}). Then
	\begin{equation}
		N_{\mathrm{loc}}(\pi,B) \ll \frac{B^2}{(\log B)^{2(1-\delta_{E/k})}}. \label{eqn:BN}
	\end{equation}
\end{theorem}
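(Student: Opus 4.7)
The plan is to reduce to Theorem~\ref{thm:non-split} via a good compactification, and then compute $\Delta$ explicitly by combining the frobenian interpretation of the splitting densities with Lemma~\ref{lem:nonsplit}. I would extend $\pi:X\to\mathbb{A}^1_k$ to a morphism $X\to\PP^1_k$ through the open immersion $\mathbb{A}^1_k\hookrightarrow\PP^1_k$ and take a good compactification $\psi:Y\to\PP^1_k$, which exists by Nagata's theorem and Hironaka's resolution of singularities. By Lemma~\ref{lem:independence}, $N_{\mathrm{loc}}(\pi,B) = N_{\mathrm{loc}}(\psi,B) + O(B^{3/2+\varepsilon})$, and this error is absorbed into the target bound. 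Hence it suffices to apply Theorem~\ref{thm:non-split} to $\psi$ and to show that $\Delta(\psi)=2(1-\delta_{E/k})$.

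To compute $\Delta(\psi)=\sum_D (1-\delta_D(\psi))$, I would argue that only $D_0=\{t=0\}$ and $D_\infty=\{t=\infty\}$ contribute. Indeed, for any other closed point $D$ of $\PP^1_k$, the element $t(D)\in\kappa(D)^\ast$ is non-zero; the fibre $\pi^{-1}(D)$ is the $\kappa(D)$-variety $\{\Norm_{E\otimes_k\kappa(D)/\kappa(D)}(\mathbf{x})=t(D)\}$, which over $\overline{\kappa(D)}$ becomes the irreducible reduced hypersurface $\prod_i x_i = t(D)$. Hence $\pi^{-1}(D)$ is geometrically integral, so $\psi^{-1}(D)$ is split, and $\delta_D(\psi)=1$.

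For $D=D_0$, I would invoke Proposition~\ref{prop:is_frob} applied to an integral model of $\psi^{-1}(D_0)$ over $\mathcal{O}_{k,S}$: the density of primes $\fp$ for which the reduction is non-split equals $1-\delta_{D_0}(\psi)$. Using Lemma~\ref{lem:delta_birational}, $\delta_{D_0}(\psi)$ depends only on the birational class of any regular proper model of the local generic fibre, so I would base change to $\FF_\fp(\!(t)\!)$, where the affine variety becomes $\Norm_{E\otimes_k\FF_\fp/\FF_\fp}(\mathbf{x})=t$. For almost all $\fp$ the \'etale algebra $E\otimes_k\FF_\fp$ decomposes as $\prod_{i,j}\FF_{\fp_{ij}}$, with $[\FF_{\fp_{ij}}:\FF_\fp]$ equal to the unramified local degree $[K_{i,\fp_{ij}}:k_\fp]$. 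Applying Lemma~\ref{lem:nonsplit} with $K=\FF_\fp$ and $m=1$, the special fibre of a regular proper model is split iff $\gcd_{i,j}[K_{i,\fp_{ij}}:k_\fp]=1$, which is exactly the condition defining $\delta_{E/k}$. Hence $\delta_{D_0}(\psi)=\delta_{E/k}$.

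The case $D=D_\infty$ is analogous after substituting $s=1/t$ and rescaling $\mathbf{y}=s\cdot\mathbf{x}$ via the diagonal embedding $k\hookrightarrow E$, which transforms the equation into $\Norm_{E/k}(\mathbf{y})=s^{[E:k]-1}$. Applying Lemma~\ref{lem:nonsplit} with $m=[E:k]-1$, the special fibre is split iff $\gcd_{i,j}[K_{i,\fp_{ij}}:k_\fp]\mid[E:k]-1$; but since $\sum_{i,j}[K_{i,\fp_{ij}}:k_\fp]=[E:k]$, this gcd already divides $[E:k]$, so the divisibility by $[E:k]-1$ is equivalent to the gcd being $1$. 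Thus $\delta_{D_\infty}(\psi)=\delta_{E/k}$ as well, yielding $\Delta(\psi)=2(1-\delta_{E/k})$, and Theorem~\ref{thm:non-split} gives the claimed bound. The main obstacle is coordinating three separate inputs at each of $D_0$ and $D_\infty$ — birational invariance (Lemma~\ref{lem:delta_birational}), the Chebotarev-type identity (Proposition~\ref{prop:is_frob}), and the local splitting criterion (Lemma~\ref{lem:nonsplit}) — and in particular verifying that the rescaling trick at $D_\infty$ recovers the same arithmetic density as at $D_0$.
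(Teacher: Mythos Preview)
Your strategy coincides with the paper's: pass to a good compactification $\psi$, invoke Theorem~\ref{thm:non-split}, argue that only $D_0$ and $D_\infty$ contribute to $\Delta(\psi)$, and show each has $\delta$-invariant $\delta_{E/k}$ via Lemma~\ref{lem:nonsplit} and Chebotarev. There is, however, a gap in your computation of $\delta_{D_0}(\psi)$: you invoke Lemma~\ref{lem:nonsplit} with $K=\FF_\fp$, but that lemma is stated only for fields of characteristic zero. (The proof of Lemma~\ref{lem:nonsplit} likely survives in positive characteristic when $E\otimes_k\FF_\fp$ is \'etale, but you cannot cite it as written.) You would also need to verify that the reduction mod~$\fp$ of your integral model of $\psi$ remains regular over $\FF_\fp[\![t]\!]$ before Lemma~\ref{lem:nonsplit} can be applied at all --- true for large $\fp$, but not addressed.

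The paper sidesteps both issues by base-changing $\psi$ to $k_\fp$ rather than reducing modulo $\fp$. Since $Y$ is smooth over $k$, the base change $Y_{k_\fp}$ is automatically regular, and Lemma~\ref{lem:nonsplit} applies directly with $K=k_\fp$. The link to $\delta_{D_0}(\psi)$ is then the elementary Galois-descent observation that $\psi^{-1}(0)\otimes_k k_\fp$ is split precisely when $\Frob_\fp$ fixes an element of $I_0(\psi)$; Chebotarev equates the two resulting densities directly, without the detour through Proposition~\ref{prop:is_frob}. Your appeal to Lemma~\ref{lem:delta_birational} is unnecessary in either route, since Lemma~\ref{lem:nonsplit} already gives a criterion independent of the choice of regular proper model. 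Your explicit treatment of $D_\infty$ via the rescaling $\mathbf{y}=s\mathbf{x}$, together with the observation that $\gcd\mid[E:k]$ forces the equivalence $\gcd\mid([E:k]-1)\Leftrightarrow\gcd=1$, is correct and more detailed than the paper, which simply declares that case ``similar''.
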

\begin{proof}
Let $\psi: Y \to \PP^1_k$ be a good compactification of $\pi$. By Lemma \ref{lem:nonsplit}, any non-split fibre must  lie over $t = 0$ or $t = \infty$. To deduce \eqref{eqn:BN} from Theorem~\ref{thm:non-split}, it suffices to prove the equalities
\begin{equation} \label{eqn:delta=delta}
	\delta_0(\psi) = \delta_\infty(\psi) = \delta_{E/k}.
\end{equation}
We check this for $t=0$, the case $t = \infty$ being similar. 

Let $\fp$ be a prime of $k$ which is unramified in $E$ and let $\psi_\fp$ denote the base change of $\psi$ to $k_\fp$. Let $\Gamma_0(\psi)$ and $I_0(\psi)$ be as in the definition \eqref{def:delta_D} of $\delta_0(\psi)$. The fibre of $\psi_\fp$ over $0$ is split if and only if $\Frob_\fp$ acts with a fixed point on $I_0(\psi)$. However, by Lemma \ref{lem:nonsplit}, the fibre of $\psi_\fp$ over $0$ is split if and only if the greatest common divisor of the degrees $[L_\fp : k_\fp]$ is equal to $1$, where $L_\fp$ runs over all subfields of $E_\fp$ which contain $k_\fp$. The claim \eqref{eqn:delta=delta} then follows from the Chebotarev density theorem.
\end{proof}

\begin{remark}
When $k = \QQ$ and $E$ is a number field, the family \eqref{multinorm} was studied by Browning--Newton. In \cite[Thm.~1.3]{BN15},
they obtained asymptotic formulae for the counting functions \eqref{def:rational} and \eqref{def:local}. They proved that
\begin{equation} \label{BrowningNewton} 
N_{\mathrm{loc}}(\pi,B) \sim c_{E/\QQ} \frac{B^2}{(\log B)^{2(1-\delta_{E/\QQ})}} 
\end{equation}
for some $c_{E/\QQ} > 0$. Theorem \ref{thm:norm} hence  gives a sharp upper bound in this case.
\end{remark}

\begin{remark}
	For any given \'etale $k$-algebra $E$, it is possible to calculate $\delta_{E/k}$ using the Chebotarev density theorem,
	though one should not expect simple expressions in general when $E/k$ is not Galois.
	One nice case is when $E/k$ is a field extension of prime degree $p$ whose Galois closure
	has Galois group $S_p$; a simple exercise shows that $\delta_{E/k} = 1 - 1/p$ in this case.
\end{remark}
We now give an example of a family of torsors under a torus which illustrates that the properness assumption in Theorem \ref{thm:non-split} is crucial:

\begin{example}[Necessity of assumptions, II]  \label{Ex:proper}
Let $K$ (resp.~$L$) be a quadratic (resp.~cubic) field extension of a number field $k$ and consider the family \eqref{multinorm} for $E=K \times L$. It has the equation $$\Norm_{K/k}(\mathbf{x}) \Norm_{L/k}(\mathbf{y}) = t.$$ The fibre of $\pi$ over $0$ is non-split and $\delta_0(\pi) < 1$, hence $\Delta(\pi) > 0$. The map $t \mapsto (t^{-1},t)$ defines a rational section, and every  fibre has a rational point. This does not contradict Theorem \ref{thm:non-split}: $\pi$ is not proper, and Lemma \ref{lem:nonsplit} implies that the fibres of a good compactification of $\pi$ are all split.\end{example}

The next example illustrates that one cannot replace the assumptions in Theorem \ref{thm:single_fibre} by the weaker condition
that there is a non-split fibre.

\begin{example}[Necessity of assumptions, III] \label{Ex:CT}
Let $a,b,ab \notin k^{* 2}$ and consider \eqref{multinorm} with $E = k(\sqrt{a}) \times k(\sqrt{b}) \times k(\sqrt{ab})$. 
It has the equation
$$\Norm_{k(\sqrt{a})/k}(\mathbf{x}) \Norm_{k(\sqrt{b})/k}(\mathbf{y}) \Norm_{k(\sqrt{ab})/k}(\mathbf{z}) = t.$$
This family was studied by Colliot-Th\'{e}l\`{e}ne in \cite{CT14}.
Let $\psi:Y \to \PP_k^1$ be a good compactification in the sense of Definition \ref{def:goodmodels}. The fibres over $t = 0$ and $t = \infty$ are non-split by Lemma \ref{lem:nonsplit}. However, the fibre over every rational point is everywhere locally solvable by \cite[Prop.~5.1]{CT14}. 
Hence the existence of a non-split fibre is not enough to  deduce the conclusion of Theorem \ref{thm:single_fibre}. 

This example is compatible with Theorem \ref{thm:iff} however. Indeed, we have $\delta_0(\pi) = \delta_\infty(\pi) = 1$, as every element of the Galois group of the polynomial
$$f(x)= (x^2 - a)(x^2 - b)(x^2 - ab) \quad \in k[x]$$ acts with a fixed point on the roots of $f$.
\end{example}

\subsection{Specialisations of Brauer group elements}
Theorem \ref{thm:non-split} allows us to recover both \cite[Thm.~2]{Ser90} and Theorem 8.2 of \cite[Ch.~II, Appendix]{Ser97a}. It also yields the expected generalisations of these results, already hinted at by Serre in \S8 of \cite[Ch.~II, Appendix]{Ser97a}, to finite collections of Brauer group elements and to arbitrary number fields $k$.

Let $U \subset \PP^n_k$ be a dense open subset and let $\br \subset \Br U$ be a finite subset. Define
$$N(\br,B) = \#\{x \in U(k) : H(x) \leq B, b(x) = 0 \in \Br k \ \text{for all}\ b \in \br \}.$$
Our generalisation of the above-mentioned results is the following.

\begin{theorem} \label{thm:Serre}
	We have
	$$N(\br,B) \ll \frac{B^{n+1}}{(\log B)^{\Delta(\br)}}, \quad \mbox{where} \quad
	\Delta(\br) = \sum_{D \in (\PP^n_k)^{(1)}} 
	\left(1 - \frac{1}{|\partial_D(\langle \br \rangle)|}\right).$$
	Here $\partial_D$ denotes the residue map at $D$ and $\langle \br \rangle$ the subgroup generated by $\br$.
\end{theorem}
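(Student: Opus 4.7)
The plan is to reduce to Theorem~\ref{thm:non-split} via a standard Severi--Brauer construction. For each $b \in \br$, let $V_b \to U$ be its Severi--Brauer variety. The fibre $(V_b)_x$ over a rational point $x \in U(k)$ has a $k$-point if and only if $b(x) = 0 \in \Br k$. Form the fibre product $V := \prod_{b \in \br}^U V_b$ over $U$, so that $V_x(k) \neq \emptyset$ if and only if $b(x) = 0$ for every $b \in \br$. Using Nagata compactification together with Hironaka's theorem, choose a good compactification $\pi: X \to \PP^n_k$ of $V \to U$ in the sense of Definition \ref{def:goodmodels}. For $x \in U(k)$ with all $b(x) = 0$, the fibre $\pi^{-1}(x)$ contains a $k$-rational point and is therefore everywhere locally solvable. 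Combining this with the Schanuel-type bound on rational points in the closed complement $\PP^n_k \setminus U$ yields
\[
N(\br, B) \leq N_{\mathrm{loc}}(\pi, B) + O(B^n).
\]
Since the error term is negligible, it suffices to apply Theorem~\ref{thm:non-split} and identify $\Delta(\pi) = \Delta(\br)$.

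The heart of the argument is the identification $\delta_D(\pi) = 1/|\partial_D(\langle \br \rangle)|$ for every $D \in (\PP^n_k)^{(1)}$. By Lemma \ref{lem:delta_birational}, the invariant $\delta_D(\pi)$ depends only on the birational class of the generic fibre over the local ring $A = \OO_{\PP^n_k, D}$, so we may work with any convenient proper, almost smooth local model. If $D \cap U \neq \emptyset$, then each $b$ is defined at the generic point of $D$, so $\partial_D(\langle \br \rangle) = 0$, and indeed $\delta_D(\pi) = 1$ since the generic fibre over $D$ is geometrically integral (being birational to a product of projective spaces, after passing to a splitting field). If $D \subset \PP^n_k \setminus U$, let $m_D = |\partial_D(\langle \br \rangle)|$ and let $L_D/\kappa(D)$ be the finite abelian extension with Galois group $\partial_D(\langle \br \rangle) \subseteq H^1(\kappa(D), \QQ/\ZZ)$ (of degree $m_D$). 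After base change to the function field of $L_D$, each $\partial_D(b)$ becomes zero, hence each $b|_K$ extends without ramification; geometrically, this means each $V_b$ becomes a projective bundle over $\Spec L_D(\!(t)\!)$, and a standard Artin-style resolution of the fibre product produces a regular proper model whose closed fibre is irreducible with algebraic closure of $\kappa(D)$ in its function field equal to $L_D$. By the second assertion of Lemma \ref{lem:Galois}, we conclude $\delta_D(\pi) = 1/[L_D:\kappa(D)] = 1/m_D$.

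Putting this into Proposition~\ref{prop:Delta} gives
\[
\Delta(\pi) = \sum_{D \in (\PP^n_k)^{(1)}} \left(1 - \frac{1}{|\partial_D(\langle \br \rangle)|}\right) = \Delta(\br),
\]
and Theorem~\ref{thm:non-split} then yields the bound. The main obstacle is the local analysis underpinning the formula $\delta_D(\pi) = 1/m_D$: one must verify, at each codimension one point $D$ where residues are non-trivial, that an almost smooth proper model of the fibre product of Severi--Brauer varieties has a closed fibre whose geometric components of multiplicity one are transitively permuted by the abelian quotient $\partial_D(\langle \br \rangle)$ of the absolute Galois group of $\kappa(D)$. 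This is done by reducing each ramified Brauer class $b$ to a cyclic algebra of degree $|\partial_D(b)|$ via the local structure of $H^2$ at a discrete valuation, constructing a cyclic-cover local model whose special fibre is integral with residue field $L_b$, and showing that taking the fibre product and resolving produces an integral special fibre with residue field the compositum $L_D$.
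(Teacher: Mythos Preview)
Your approach is essentially the paper's: build the fibre product of Severi--Brauer schemes over $U$, pass to a good compactification $\pi:X\to\PP^n_k$, apply Theorem~\ref{thm:non-split}, and identify $\delta_D(\pi)=1/|\partial_D(\langle\br\rangle)|$ via Lemma~\ref{lem:delta_birational} together with Lemma~\ref{lem:Galois}.

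The one place where the paper is sharper is the local model at a ramified $D$. You assert that ``a standard Artin-style resolution'' yields a \emph{regular} proper model with \emph{irreducible} closed fibre whose residue field has algebraic closure $L_D$ over $\kappa(D)$. That is stronger than what is actually available, and resolution of singularities will in general introduce further components in the special fibre. The paper instead invokes \cite[Lem.~2.3]{Lou13}, which produces an integral, proper, \emph{almost smooth} (not regular) model $\psi:\mathcal{V}\to\Spec R$ whose special fibre may have several irreducible components, but with the algebraic closure of $\kappa(D)$ in the function field of \emph{each} component equal to the compositum $K$ of the cyclic extensions determined by the residues $\partial_D(b)$. Thus $I_D(\psi)$ is the $\Gamma_D$-set corresponding to $(\Spec K)^m$ for some $m$, and since $K/\kappa(D)$ is abelian one still obtains $\delta_D(\psi)=1/[K:\kappa(D)]$ by the argument of Lemma~\ref{lem:Galois}; Lemma~\ref{lem:delta_birational} (which is stated for almost smooth, not just regular, models precisely for this reason) then transfers this to $\delta_D(\pi)$. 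So your outline is correct, but the construction you describe as ``standard'' is exactly the non-trivial input, and the regularity/irreducibility you claim should be weakened to almost-smoothness with possibly several components sharing the same splitting field.
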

\begin{proof}
	Most of the tools needed to translate this to a problem about Severi--Brauer
	schemes can be found in \cite{Lou13}.
	For $b\in \br$, let $V_b \to U$ denote the corresponding Severi--Brauer scheme. Let
	$V = \prod_{b \in \br} V_b \to U$
	be their fibre product, taken over $U$. Fix a good compactification $\pi:X \to \PP^n_k$ of $V \to U$.
	Standard properties of Severi--Brauer schemes
	\cite[\S2.3]{Lou13} give
	$$N(\br,B) = \#\{x \in U(k) : H(x) \leq B, x \in \pi(X(k))\}.$$
	Hence Theorem \ref{thm:non-split} yields
	$$N(\br,B) \ll \frac{B^{n+1}}{(\log B)^{\Delta(\pi)}}.$$
	To complete the proof, it suffices to show that for all $D \in (\PP^n_k)^{(1)}$ we have
	\begin{equation} \label{eqn:delta=res}
		\delta_D(\pi) = \frac{1}{|\partial_D(\langle \br \rangle)|}.
	\end{equation}
	We shall prove this using Lemma \ref{lem:delta_birational} and the special models
	constructed in \cite[Lem.~2.3]{Lou13}. Let $R$ be the local ring at $D$.
	By \cite[Lem.~2.3]{Lou13} there exists an integral proper almost smooth scheme
	$$\psi:\mathcal{V} \to \Spec R$$
	whose generic fibre is isomorphic to the generic fibre of $\pi$, with the property that
	the algebraic closure of $\kappa(D)$ in the function field of each irreducible component of 
	$\psi^{-1}(D)$ is isomorphic to the compositum $K$ of the cyclic field extensions determined by the residues
	$\res_D(b) \in \mathrm{H}^1(\kappa(D), \QQ/\ZZ)$ for $b \in \br$. In particular, the set
	of irreducible components $I_D(\psi)$ is isomorphic to the $\Gamma_D(\psi)$-set corresponding
	to the finite \'{e}tale scheme $(\Spec K)^n$, for some $n \in \NN$.
	As $K/\kappa(D)$ is Galois of degree $|\partial_D(\langle \br \rangle)|$,
	a argument similar to the proof of Lemma \ref{lem:Galois} yields
	$$\delta_D(\psi) = \frac{1}{|\partial_D(\langle \br \rangle)|}.$$
	However, by Lemma \ref{lem:delta_birational} we have $\delta_D(\pi) = \delta_D(\psi)$,
	whence \eqref{eqn:delta=res}.
\end{proof}

As a special case, let $\pi: X \to \PP^n_k$ be a conic bundle, with $X$ non-singular. 
Then Theorem \ref{thm:non-split} shows that
\begin{equation*}
 N_{\mathrm{loc}}(\pi,B) \ll \frac{B^{n+1}}{(\log B)^{\Delta(\pi)}}, \quad  
\Delta(\pi) = \frac{1}{2}\#\{D \in (\PP^n_k)^{(1)}: \pi^{-1}(D) \text{ is non-split} \}.
\end{equation*}

\subsection{Fermat curves} \label{sec:Fermat}
Consider the family of Fermat curves of degree $d \geq 2$
$$X: \quad a_0x_0^d + a_1x_1^d + a_2x_2^d = 0 \quad \subset \PP_k^2 \times \PP_k^2$$
 over a number field $k$, equipped with the natural projection $\pi:X \to \PP_k^2$ to $(a_0 : a_1 : a_2)$. 
 This family was studied over $\QQ$ in \cite{BD09}. When $k=\QQ$, \cite[Thm.~1]{BD09} gives
\begin{equation} \label{eqn:BD}
	N_{\mathrm{loc}}(\pi,B) \ll \frac{B^3}{(\log B)^{3\psi(d)}}, \quad \psi(d) = \frac{1}{\varphi(d)}\left(1 - \frac{1}{d}\right)
\end{equation}
where $\varphi$ is Euler's totient function.
Theorem \ref{thm:non-split} gives the following result:

\begin{theorem} \label{thm:FermatQ}
	For $d\geq 2$ and $k = \QQ$ we have
	\begin{equation} \label{eqn:Fermat}
		N_{\mathrm{loc}}(\pi,B) \ll \frac{B^3}{(\log B)^{3(1-\delta(d))}},
	\end{equation}
	where $\delta$ is the multiplicative function given by
	$$\delta(p^m) =1- \frac{p^{2m} - 1}{p^{2m-1}(p^2-1)},$$ for any prime $p$ and for any positive integer $m$.
\end{theorem}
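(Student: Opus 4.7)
The plan is to apply Theorem~\ref{thm:non-split} directly to $\pi$. First I would verify the hypotheses: the Jacobian criterion shows $X$ is smooth, since $\partial F/\partial a_i = x_i^d$ cannot simultaneously vanish on $X$; $\pi$ is proper as a restriction of the projection $\PP^2\times\PP^2 \to \PP^2$; and the generic fibre is a smooth Fermat curve over $\QQ(a_0, a_1, a_2)$, hence geometrically integral. Over any codimension one point $D$ with all $a_i$ nonzero, the fibre is again a smooth plane curve of degree $d \geq 2$, so it is geometrically irreducible. Consequently only $D_i = \{a_i = 0\}$ for $i = 0, 1, 2$ can contribute to $\Delta(\pi)$, and they contribute equally by the $S_3$-symmetry of the family.

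To compute $\delta_{D_0}(\pi)$, note that $\kappa(D_0) \cong \QQ(t)$ with $t = -a_2/a_1$, and that the fibre over $D_0$ is the plane curve $a_1 x_1^d + a_2 x_2^d = 0$, which geometrically is the union of $d$ distinct reduced lines $x_1 - \zeta x_2 = 0$ indexed by the roots $\zeta$ of $X^d + t$. Since $\QQ(t) \cap \Qbar = \QQ$ and $t$ has valuation one at $t = 0$, the Kummer extension $\QQ(t, \zeta_d) \subset \QQ(t, \zeta_d, t^{1/d})$ has full degree $d$, so the Galois group of the splitting field over $\kappa(D_0)$ realises the maximal semidirect product $\Gamma_{D_0}(\pi) \cong \ZZ/d \rtimes (\ZZ/d)^*$, with an element $(a, c)$ acting on $I_{D_0}(\pi) \cong \ZZ/d$ as the affine map $j \mapsto cj + a$. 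For fixed $c$ the equation $j(c-1) \equiv -a \pmod d$ is solvable in $j$ iff $-a$ lies in the image of multiplication by $c - 1$ on $\ZZ/d$, which has size $d/\gcd(d, c-1)$; dividing by $d\,\varphi(d)$ yields
\[
\delta_{D_0}(\pi) = \frac{1}{\varphi(d)} \sum_{c \in (\ZZ/d)^*} \frac{1}{\gcd(d, c-1)}.
\]

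Finally I would identify this expression with the multiplicative function $\delta(d)$ from the statement. Multiplicativity of the right-hand side is immediate from CRT: for $d = d_1 d_2$ coprime, one has $\gcd(d, c-1) = \gcd(d_1, c_1-1)\gcd(d_2, c_2-1)$, so the sum factors. At a prime power $p^m$ I would stratify $(\ZZ/p^m)^*$ by $v_p(c-1)$, obtaining $(p-2)p^{m-1}$ elements at level $0$, $(p-1)p^{m-k-1}$ at each level $1 \leq k \leq m-1$, and the single element $c = 1$ at level $m$; summing the resulting truncated geometric series produces the stated closed form. Combining with Theorem~\ref{thm:non-split} then gives $\Delta(\pi) = 3(1 - \delta(d))$ and the desired bound. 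The main obstacle is the bookkeeping in this last step: one must verify the maximality of the Galois group $\Gamma_{D_0}(\pi)$ and then carefully carry out the stratification of $(\ZZ/p^m)^*$ to extract the closed form; otherwise the argument is a direct application of Theorem~\ref{thm:non-split}.
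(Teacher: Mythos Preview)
Your proposal is correct and follows essentially the same route as the paper: identify the three non-split fibres over the coordinate hyperplanes, realise $I_{D_i}(\pi)$ as the roots of $x^d+a$ with Galois group $\ZZ/d\ZZ \rtimes (\ZZ/d\ZZ)^\times$ acting affinely, count fixed-point elements via the divisibility condition $\gcd(c-1,d)\mid a$, and reduce to a prime-power stratification by $v_p(c-1)$. If anything, you are slightly more explicit than the paper in checking the hypotheses of Theorem~\ref{thm:non-split} (smoothness via the Jacobian, properness, geometric integrality of the other fibres) and in justifying that the Galois group is the full affine group.
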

\begin{proof}
	The non-split fibres over codimension $1$ points lie over the generic points of the three divisors $D_i$ given by $a_i = 0$, for $0 \leq i \leq 2$. On choosing an isomorphism $\kappa(D_i) \cong \QQ(a)$ for some purely transcendental
	element $a$, we see that
	the finite \'{e}tale $\kappa(D_i)$-scheme corresponding to each $\Gamma_{D_i}(\pi)$-set 
	$I_{D_i}(\pi)$ is given by the zero locus of the polynomial
	$$f_d(x) = x^d + a \quad \in \QQ(a)[x].$$
	The Galois group $\Gamma_d$ of the Galois closure of $f_d$ is
	\begin{equation} \label{eqn:Gamma}
		\Gamma_d \cong \ZZ/d\ZZ \rtimes (\ZZ/d\ZZ)^\times.
	\end{equation}
	As $X$ is non-singular, an application of Theorem \ref{thm:non-split} now yields 
	an upper bound of the shape \eqref{eqn:Fermat} with
	\begin{equation} \label{def:F(d)}
		\delta(d) = \frac{F(d)}{d\varphi(d)},
	\end{equation}
	where $F(d)$ is the number of elements of $\Gamma_d$ which act with a fixed point on the roots of $f_d(x)$.
	With respect to the isomorphism \eqref{eqn:Gamma}, the group $\Gamma_d$ acts on the roots
	as the group of affine linear transformations on $\ZZ/d\ZZ$, i.e.~$(s,t)\cdot n = s + tn.$
	Hence we need to count the number of pairs $(s,t) \in \ZZ/d\ZZ \times (\ZZ/d\ZZ)^\times$ 
	for which the equation $(1-t)n = s$ has a solution for some $n \in \ZZ/d\ZZ$; this is the case precisely when
	$\gcd(t-1,d) \mid s$. Therefore we obtain
	\begin{equation} \label{eqn:F(d)}
		F(d) = \#\{(s,t) \in \ZZ/d\ZZ \times (\ZZ/d\ZZ)^\times:   s \equiv 0 \bmod \gcd(t-1,d)\}.
	\end{equation}
	The Chinese remainder theorem now implies that
	$F$ is a multiplicative function of $d$. To determine 
	$F(p^m)$ when $p$ is prime and $m$ is a positive integer, we use \eqref{eqn:F(d)} to
	find that
	\begin{align*}
		F(p^m) &= \sum_{i = 0}^m \#\{s \in \ZZ/p^m\ZZ: v_p(s) \geq i\} \cdot 
		\#\{t \in (\ZZ/p^m\ZZ)^\times: v_p(t - 1) = i\} \\ 
		&= p^{2m-1}(p - 2) + 1 + (p-1)\sum_{i = 1}^{m - 1} p^{2(m-i)-1}  \\
		&= p^{2m-1}(p-2) + 1 + \frac{p(p^{2(m - 1)} - 1)}{p+1}		\\
		&= p^{2m-1}(p-1) - \frac{p^{2m} - 1}{p+1}.
	\end{align*}
	Combining this with $\eqref{def:F(d)}$ completes the proof.
\end{proof}
	
Comparing Theorem \ref{thm:FermatQ} with \eqref{eqn:BD}, 
we find $1 - \delta(p) = \psi(p) = 1/p$. However our result is stronger,
since we have $1 - \delta(p^m) > \psi(p^m)$ for all $m \geq 2$. 

Note that Conjecture \ref{question} is known in this case when $d = 2$ and $k=\QQ$, by independent work of Hooley \cite{Hoo93} and Guo \cite{Guo95}. Other (non-sharp) lower bounds have been recently obtained by Dietmann and Marmon \cite[Lem.~5]{DM15}.
The upper bounds we obtain change as one varies the number field.
For example, if $\mu_d \subset k$, then Theorem \ref{thm:non-split} gives
the following bound, which is strictly stronger in general than the bound one obtains over $\QQ$:
\begin{theorem}
	If $d \geq 2$ and $\mu_d \subset k$, then
	$$N_{\mathrm{loc}}(\pi,B) \ll \frac{B^3}{(\log B)^{3(1-1/d)}}.$$
\end{theorem}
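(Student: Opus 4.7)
The plan is to run the argument of Theorem \ref{thm:FermatQ} and observe that the presence of $\mu_d$ in $k$ collapses the Galois group of $x^d + a$ from the affine group $\ZZ/d\ZZ \rtimes (\ZZ/d\ZZ)^\times$ down to just $\ZZ/d\ZZ$, acting freely on the roots. First I would note, exactly as in the proof of Theorem \ref{thm:FermatQ}, that the only codimension one points of $\PP_k^2$ over which the fibre of $\pi$ is non-split are the three coordinate divisors $D_i = \{a_i = 0\}$, $i = 0, 1, 2$; after choosing an isomorphism $\kappa(D_i) \cong k(a)$ with $a$ transcendental over $k$, the $\Gamma_{D_i}(\pi)$-set $I_{D_i}(\pi)$ is identified with the set of roots of $f_d(x) = x^d + a$ in an algebraic closure of $k(a)$.

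Under the hypothesis $\mu_d \subset k$, the extension $k(a)[x]/(x^d + a)$ is Kummer, hence already Galois over $k(a)$ with cyclic Galois group of order $d$. Writing one root as $\alpha$, the other roots are $\zeta^j \alpha$ for $\zeta \in k$ a primitive $d$-th root of unity, and a generator of the Galois group sends $\zeta^j \alpha \mapsto \zeta^{j+1} \alpha$. The action on $I_{D_i}(\pi)$ is therefore the regular representation of $\ZZ/d\ZZ$, which is free, so only the identity has a fixed point. By the second clause of Lemma \ref{lem:Galois} applied to this integral fibre (whose algebraic closure of $\kappa(D_i)$ inside the function field of each component is the Galois extension $\kappa(D_i)(\sqrt[d]{-a})$), or equivalently by direct inspection of the definition \eqref{def:delta_D}, we obtain
$$\delta_{D_i}(\pi) = \frac{1}{d}, \qquad i = 0, 1, 2.$$

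Proposition \ref{prop:Delta} then yields $\Delta(\pi) = 3(1 - 1/d)$, and applying Theorem \ref{thm:non-split} to $\pi$ (which is proper with smooth total space and geometrically integral generic fibre, exactly as in the setup of Theorem \ref{thm:FermatQ}) produces the claimed bound. No serious obstacle is expected: the computation is a transparent specialisation of that in Theorem \ref{thm:FermatQ}, with the only new input being the Kummer-theoretic description of the splitting field, and the resulting saving $3(1 - 1/d)$ is strictly larger than $3(1 - \delta(d))$ for $d > 2$, reflecting the additional splitting information gained by adjoining the $d$-th roots of unity to the base field.
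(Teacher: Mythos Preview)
Your proposal is correct and follows essentially the same approach as the paper: both reduce to the setup of Theorem \ref{thm:FermatQ}, observe that with $\mu_d \subset k$ the polynomial $x^d + a$ over $k(a)$ defines a Galois (Kummer) extension of degree $d$, and then invoke Lemma \ref{lem:Galois} together with Theorem \ref{thm:non-split}. Your write-up is simply a more detailed unpacking of the paper's brief sketch.
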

\begin{proof}
	As $\mu_d \subset k$, the polynomial 
	$x^d + a$
	over $k(a)$ defines a  Galois extension of degree $d$.
	On applying Lemma \ref{lem:Galois}	and Theorem \ref{thm:non-split},
	the result follows in a manner similar to the proof of Theorem \ref{thm:FermatQ}.
\end{proof}

\subsection{Genus $1$ fibrations: a question of Graber--Harris--Mazur--Starr}
Let $\pi:X \to \PP^1_k$ be a pencil of genus $1$ curves over a number field $k$.
Some questions on the behaviour of the counting function \eqref{def:rational} 
in such cases were raised by Graber--Harris--Mazur--Starr in \cite{GHMS04}. 
They studied the counting function
$$N^*(\pi, B) = \#\{x \in \PP^1(k): x \notin \pi(X(k)), H(x) \leq B\},$$
which is the complement of \eqref{def:rational}. Question $3$ of \cite{GHMS04} is the following.
\begin{question}[Graber--Harris--Mazur--Starr] \label{q:GHMS}
	If $\pi$ does not admit a section, then does there exist $e>0$ such that
	$$N^*(\pi,B) \gg B^e \quad ?$$
\end{question}
If $\Delta(\pi) > 0$ then Theorem \ref{thm:non-split} answers this question in the affirmative. In fact, we obtain the stronger 
statement that
$$N^*(\pi,B) \sim \#\{x \in \PP^1(k): H(x) \leq B\}, \quad \mbox{as } B \to \infty.$$
In particular, Question \ref{q:GHMS} has a positive answer in such cases with $e = 2$.
We now give some explicit examples to which our results apply.

\subsubsection{Families of quadratic twists}
Let us first consider the example studied in \cite[\S 2]{GHMS04}. We briefly recall the construction of genus $1$
fibrations used in \cite{GHMS04}.
Let $U \subset \PP^1_k$ be a dense open subset and let $\mathcal{E}$ be an elliptic scheme over $U$. For $n\in \NN$, Kummer theory
yields an exact sequence
\begin{equation}\label{seq:kummer} \nonumber
 0 \to \mathcal{E}(U)/n\mathcal{E}(U) \to \mathrm{H}^1(U, \mathcal{E}[n]) \to \mathrm{H}^1(U,\mathcal{E})[n] \to 0.
\end{equation}
Any cocycle with class $\alpha \in \mathrm{H}^1(U, \mathcal{E}[n])$ gives rise to an $\mathcal{E}$-torsor $\pi^\alpha: \mathcal{E}^\alpha \to U$,
which admits no section if $\alpha$ does not lie in the image of  $\mathcal{E}(U) \to \mathrm{H}^1(U, \mathcal{E}[n])$.

We now apply this construction with $n=2$. Let 
$$E: y^2 = f(x)$$
be the Weierstrass form of an elliptic curve over $k$.
Consider the family of quadratic twists
\begin{equation} \label{def:quadratic_twists}
	\mathcal{E}: ty^2 = f(x)
\end{equation}
over $U=\mathbb{G}_{\mathrm{m},k} = \mathrm{Spec}\,k[t,\frac1t]$. Let $\alpha \in \mathrm{H}^1(k, E[2])$ be non-zero. 
Note that we have $\mathcal{E}[2] \cong E[2] \times_k U$ as group schemes over $U$. In particular, $\alpha$ 
naturally gives rise to an element of $\mathrm{H}^1(U, \mathcal{E}[2])$.

When $k=\QQ$ and $f$ is irreducible, Graber--Harris--Mazur--Starr \cite[\S 2]{GHMS04} 
used deep results on modular forms due to Ono--Skinner \cite{OS98} and Kolyvagin \cite{Kol90}, to deduce
that 
\begin{equation} \label{eqn:GHMS}
	N^*(\pi^\alpha,B) \gg \frac{B}{\log B}.
\end{equation}
In particular, one may take any $e< 1$ in Question \ref{q:GHMS} in this case. Our sieve methods allow us to bypass these deep modularity results, while simultaneously improving upon \eqref{eqn:GHMS} and obtaining results valid over any number field, for possibly reducible $f$.

\begin{theorem} \label{thm:quadratic_twists}
	Let $\mathcal{E}/U$ be the quadratic twist family of $E$ as in \eqref{def:quadratic_twists}. Let
	$\alpha \in \mathrm{H}^1(k, E[2])$ be non-zero with associated genus $1$ fibration $\pi^\alpha: \mathcal{E}^\alpha \to U$.
	Then
	\begin{equation} \label{eqn:quadratic_twists_bound}
		N_{\mathrm{loc}}(\pi^\alpha,B) \ll \frac{B^2}{(\log B)^{1/2}}.
	\end{equation}
	In particular, Question \ref{q:GHMS} has a positive answer in this case with $e = 2$.
\end{theorem}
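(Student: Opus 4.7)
The plan is to apply Theorem \ref{thm:non-split} to a good compactification of $\pi^\alpha$ and to exhibit $\Delta \geq 1/2$ by a local analysis at the boundary points $t = 0$ and $t = \infty$.

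First, by Nagata's theorem and resolution of singularities, choose a good compactification $\psi: Y \to \PP^1_k$ of $\pi^\alpha$ in the sense of Definition \ref{def:goodmodels}. By Lemma \ref{lem:independence}, bounding $N_\mathrm{loc}(\pi^\alpha, B)$ is the same as bounding $N_\mathrm{loc}(\psi, B)$ up to a lower-order error. By Theorem \ref{thm:non-split} together with Proposition \ref{prop:Delta}, it suffices to prove
$$\Delta(\psi) = \sum_{D \in (\PP^1_k)^{(1)}} (1 - \delta_D(\psi)) \geq \tfrac{1}{2}.$$
Since $\pi^\alpha|_U$ is a torsor under the smooth elliptic fibration $\mathcal{E}|_U$, its fibres over $U$ are geometrically integral, and Lemma \ref{lem:delta=1} gives $\delta_D(\psi) = 1$ for every $D \in U$. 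Only the two boundary fibres at $D = 0$ and $D = \infty$ can contribute to $\Delta(\psi)$.

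The main step is to compute $\delta_D(\psi)$ at $D \in \{0, \infty\}$. Via Lemma \ref{lem:delta_birational} one may work with any convenient proper almost smooth integral model of $\mathcal{E}^\alpha_\eta$ over the local ring at $D$. A natural candidate is obtained by pulling back a minimal regular model of the underlying family $\mathcal{E}$, which is an isotrivial elliptic surface of Kodaira type $I_0^*$ at both $0$ and $\infty$, along the 2-cover morphism $\mathcal{E}^\alpha \to \mathcal{E}$ and resolving the resulting singularities. The set $I_D(\psi)$ of multiplicity-one components of the special fibre at $D$ is then naturally a $\Gamma_D(\psi)$-set related to $E[2]$, with Galois action twisted by $\alpha$ and by the class of the local uniformizer at $D$.

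The final step is a group-theoretic estimate showing that the non-triviality of $\alpha$ forces at least one of $D \in \{0, \infty\}$ to satisfy $\delta_D(\psi) \leq 1/2$. The main obstacle is the precise identification of $I_D(\psi)$ as a $\Gamma_D(\psi)$-set, uniformly in the Galois structure of $E[2]$ (which differs sharply between the Graber--Harris--Mazur--Starr setting of irreducible $f$ and cases when $E$ has rational $2$-torsion). Careful bookkeeping of the twisting data at $t = 0$ versus $t = \infty$ will be essential, since the involution $t \leftrightarrow 1/t$ interchanges the endpoints but does not preserve the twist class attached to the local uniformizer; in particular one expects that, even when $\delta_0(\psi) = 1$, the non-vanishing of $\alpha$ will force $\delta_\infty(\psi) \leq 1/2$, and conversely.
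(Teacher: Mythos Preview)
Your overall strategy---apply Theorem \ref{thm:non-split} to a good compactification and show that only the fibres over $0$ and $\infty$ contribute---matches the paper's. However, the endgame you sketch is off in a way that matters.

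The paper builds the model directly by twisting: the N\'eron model $\mathcal{E}^{\mathrm{N}}$ of the quadratic twist family over $\PP^1_k$ has $I_0^*$ fibres at $0$ and $\infty$, with component group $(\ZZ/2\ZZ)^2$ on which $E[2]$ acts simply transitively; moreover $\mathcal{E}^{\mathrm{N}}[2]$ is the \emph{constant} group scheme $E[2]\times_k\PP^1_k$. Twisting the minimal proper regular model $\mathcal{E}^{\mathrm{N}}_c$ by $\alpha$ therefore produces a model of $\mathcal{E}^\alpha$ whose set of multiplicity-one components over $0$ (and, by the same argument, over $\infty$) is exactly the $E[2]$-torsor with class~$\alpha$. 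There is no extra ``local uniformizer'' twist, and the two endpoints are completely symmetric: $I_0(\psi)$ and $I_\infty(\psi)$ are the \emph{same} Galois set. So your anticipated asymmetry, and the scenario $\delta_0(\psi)=1$, cannot occur.

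Consequently your numerical target is wrong. You aim to show that at least one of $\delta_0,\delta_\infty$ is $\leq 1/2$, but this fails in general: the paper observes that an $E[2]$-torsor with nonzero class is a degree-$4$ \'etale scheme with no rational point, and an inspection of the possible Galois actions inside $S_4$ gives only $\delta\leq 3/4$, with equality attained (e.g.\ $\Spec K_1\sqcup\Spec K_2$ for distinct quadratic extensions $K_1,K_2$). The correct conclusion is therefore $\delta_0(\psi)\leq 3/4$ \emph{and} $\delta_\infty(\psi)\leq 3/4$, giving $\Delta(\psi)\geq 1/4+1/4=1/2$. Your route via the $2$-cover pullback and resolution is not wrong in principle, but it obscures the clean identification of $I_D(\psi)$ as the torsor of class $\alpha$, which is what makes the group-theoretic bound immediate.
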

\begin{proof}
	We will construct the minimal proper regular model of $\mathcal{E}^\alpha$ and show that
	there are non-split fibres over $0$ and $\infty$.
	
	Let $\mathcal{E}^{\mathrm{N}}$ be the N\'{e}ron model of $\mathcal{E}$ over $\PP^1_k$.
	By Tate's algorithm, the fibres over $0$ and $\infty$ have Kodaira type $I_0^*$.
	In particular, the geometric fibres over $0$ and $\infty$ are isomorphic to the group scheme
	$(\ZZ/2\ZZ)^2 \times \Ga$: the component group is $(\ZZ/2\ZZ)^2$ by the table in \cite[p.~365]{Sil94},
	and the component group exact sequence splits as $\mathrm{char}(k) = 0$ \cite[Cor.~1.5]{LL01}.
	Let $\mathcal{E}^{\mathrm{N}}[2]$ be the $2$-torsion group scheme of $\mathcal{E}^{\mathrm{N}}$ over $\PP^1_k$.
	From the above, one sees that $\mathcal{E}^{\mathrm{N}}[2]$ is finite \'{e}tale over $\PP^1_k$,
	hence is the base-change of $E[2]$ to $\PP^1_k$. We identify $\alpha$ with its image in 
	$\mathrm{H}^1(\PP^1_k, \mathcal{E}^{\mathrm{N}}[2])$.
	
	Next let $\mathcal{E}^{\mathrm{N}}_c$ be the minimal proper regular model of $\mathcal{E}^{\mathrm{N}}$.
	This admits an action of $\mathcal{E}^{\mathrm{N}}[2]$. In particular, 
	we may twist $\mathcal{E}^{\mathrm{N}}_c$ by a cocycle with class $\alpha$ to obtain a genus $1$ fibration
	$$\pi_c^{\mathrm{N}, \alpha}: \mathcal{E}_c^{\mathrm{N}, \alpha} \to \PP^1_k,$$
	which is the minimal proper regular model of $\mathcal{E}^\alpha$.
	
	We now consider the singular fibres of $\pi_c^{\mathrm{N}, \alpha}$. We focus on the fibre
	over $0$, as the analysis at $\infty$ is analogous.	
	The group scheme $E[2]$ acts transitively	on the geometric irreducible components of $\mathcal{E}^{\mathrm{N}}$ above $0$. 
	We find that the set $I_0(\pi_c^{\mathrm{N}, \alpha})$
	is an $E[2]$-torsor with class $\alpha$. As this is not the trivial torsor, it has no rational point.
	This shows that the fibre of $\pi_c^{\mathrm{N}, \alpha}$ above $0$ is non-split.
	
	To deduce \eqref{eqn:quadratic_twists_bound} we give a lower bound for $\Delta(\pi_c^{\mathrm{N}, \alpha})$.
	Consider an $E[2]$-torsor with class $\alpha$. This is some finite \'{e}tale scheme of degree $4$ over $k$,
	with Galois action determined by some subgroup of $S_4$ that acts without a fixed point.
	An inspection of such subgroups shows that $\delta_0(\pi_c^{\mathrm{N}, \alpha}) \leq 3/4$. 
	Equality holds for $\Spec K_1 \sqcup \Spec K_2$ where $K_1,K_2$ are different quadratic extensions
	 of $k$, for example. 
	Similarly $\delta_\infty(\pi_c^{\mathrm{N}, \alpha}) \leq 3/4$,
	hence Theorem \ref{thm:non-split} gives the result.
\end{proof}

\subsubsection{Quartic surfaces}
Any line on a smooth quartic surface gives rise to a genus $1$ fibration.
We examine the behaviour obtained for a special family of surfaces considered by Rams and Sch\"{u}tt
\cite[Lem.~4.5]{RS15}. 
Let $\mathcal{Z} \subset \PP_k^3 \times \mathbb{A}_k^8$ be the family of all smooth quartics
of the form
$$x_1^3 x_3 + x_2^3 x_4 + x_1x_2q(x_3,x_4) + g(x_3,x_4) = 0 \quad \subset \PP_k^3,$$ where $\deg q = 2$ and $\deg g = 4$. We view this over a suitable dense open subset $\mathcal{Z} \to U \subset \mathbb{A}_k^8$, given by the choice of coefficients for $q$ and $g$ for which one obtains a smooth surface. Each such surface contains the line $x_3 = x_4 = 0$. For $u \in U$, we denote the corresponding surface over $\kappa(u)$ by $\mathcal{Z}_u$.  We first study the generic surface $\mathcal{Z}_\eta$ over the function field $\kappa(\eta)$ of $\mathbb{A}_k^8$.

\begin{lemma} \label{lem:quartic}
	Let $\pi_\eta: \mathcal{Z}_\eta \to \PP_{\kappa(\eta)}^1$ be the generic surface in  $\mathcal{Z}$ and let $P$
	be the closed point of $\PP_{\kappa(\eta)}^1$ given by $q^3 + 27 x_3 x_4 g = 0$. The fibre over $P$ is 
	irreducible and becomes split over a pure cubic extension of $\kappa(P)$. 
	The fibre above every other point of $\PP_{\kappa(\eta)}^1$ is geometrically integral.
\end{lemma}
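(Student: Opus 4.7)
The plan is to make the fibration $\pi_\eta$ explicit using the pencil of planes through $L$. A plane through $L$ has the form $t x_3 - s x_4 = 0$ for $[s:t] \in \PP^1$; parametrising it by $x_3 = sv$, $x_4 = tv$ and substituting into the defining equation of $\mathcal{Z}_\eta$ gives, by homogeneity of $q$ and $g$,
$$v \bigl( s x_1^3 + t x_2^3 + v x_1 x_2\, q(s,t) + v^3\, g(s,t) \bigr) = 0.$$
The factor $v$ cuts out $L$, and the residual cubic
$$C_{[s:t]} : \;\; s x_1^3 + t x_2^3 + v x_1 x_2\, q(s,t) + v^3 g(s,t) = 0 \;\subset\; \PP^2_{(x_1:x_2:v)}$$
represents the fibre of $\pi_\eta$ above $[s:t]$.

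The next step is to locate the singular fibres by solving the Jacobian system, which splits into two cases. A singular point with $v \neq 0$ forces $q(s,t)^3 + 27 s t\, g(s,t) = 0$; this sextic is irreducible over $\kappa(\eta)$ for generic $q, g$ (verifiable by specialising it to the ninth cyclotomic polynomial $\Phi_9(u) = u^6 + u^3 + 1$) and thus cuts out the single closed point $P$. A singular point with $v = 0$ forces $[s:t] \in \{[1:0], [0:1]\}$, with unique singular point $(0:1:0)$, respectively $(1:0:0)$. Inspection of the lowest-order terms at these points gives tangent cone $q(1,0)\, v x_1 = 0$, respectively $q(0,1)\, v x_2 = 0$: two distinct $\kappa(\eta)$-rational lines, so each singularity is a split ordinary node. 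A reducible plane cubic cannot have exactly one ordinary node (a transverse meeting of components produces further singularities), so $C_{[1:0]}$ and $C_{[0:1]}$ are irreducible nodal cubics, hence geometrically integral; every remaining fibre is a smooth plane cubic, likewise geometrically integral.

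The main work concerns $C_P$. Using the relation $q^3 = -27 s t\, g$ on $\kappa(P)$, multiplying the cubic by $27 s t$ yields
$$27 s^2 t\, x_1^3 + 27 s t^2\, x_2^3 + 27 s t\, q\, v x_1 x_2 - q^3 v^3 = 0,$$
which is of the form $A^3 + B^3 + C^3 - 3 A B C$ with $A = 3 (s^2 t)^{1/3} x_1$, $B = 3 (s t^2)^{1/3} x_2$, $C = -q v$. The classical factorisation $a^3 + b^3 + c^3 - 3 abc = \prod_{i=0}^{2} \bigl(a + \omega^i b + \omega^{-i} c\bigr)$ (with $\omega$ a primitive cube root of unity) exhibits $C_P$ geometrically as three concurrent lines. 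Writing $u = s/t$ and $\gamma = u^{1/3}$, one has $(s^2 t)^{1/3} = t \gamma^2$ and $(s t^2)^{1/3} = t \gamma$, so the line $L_0 := \{A + B + C = 0\}$ has all of its coefficients in the pure cubic extension $L := \kappa(P)(\gamma)$ of $\kappa(P)$. Over $L$, the line $L_0 \cong \PP^1_L$ is a geometrically integral irreducible component of $C_P \otimes_{\kappa(P)} L$, proving that $C_P$ becomes split over $L$.

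Finally I would verify that $C_P$ is itself irreducible over $\kappa(P)$. A direct calculation shows that the substitution $\gamma \mapsto \omega \gamma$ permutes the three lines as divisors by the $3$-cycle $L_0 \mapsto L_2 \mapsto L_1 \mapsto L_0$, so the Galois orbit of $L_0$ has size three as soon as $\gamma \notin \kappa(P)$, i.e.~as soon as $u$ is not a cube in $\kappa(P)$. The relation $q^3 = -27 u g$ reduces non-cubeness of $u$ to non-cubeness of $g(u,1)$ in $\kappa(P)$; for the generic parameters this follows by specialising so that $p(u) = \Phi_9(u)$, in which case $\kappa(P) \cong \QQ(\zeta_9)$ and $u$ corresponds to $\zeta_9$, which is visibly not a cube (else $\QQ(\zeta_{27})$ would embed into $\QQ(\zeta_9)$, contradicting $\varphi(27) = 18 > 6 = \varphi(9)$), after which semicontinuity gives the generic statement. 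This final non-cube verification is the main obstacle; everything else reduces to explicit computation with the residual cubic.
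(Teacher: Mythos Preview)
Your core approach to the fibre over $P$ is the same as the paper's: the paper also rewrites the residual cubic (after multiplying through by $\alpha$ and using the relation at $P$) as the norm form $\Norm_{L(\alpha^{1/3})/L}(x_1\alpha^{1/3}+x_2\alpha^{2/3}-x_3\beta/3)$, which is precisely your identity $A^3+B^3+C^3-3ABC=\prod_i(A+\omega^iB+\omega^{-i}C)$ in disguise. The paper then simply states that this norm form is irreducible and splits over the cubic extension, whereas you spell out the Galois argument and the specialisation to $\Phi_9$; your extra detail here is welcome, since the paper's ``one easily sees'' hides exactly the verification that $\alpha$ is not a cube in $\kappa(P)$ (equivalently, that $p(X^3)$ is irreducible, and your specialisation $p=\Phi_9$ gives $p(X^3)=\Phi_{27}$).

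There is, however, a genuine gap in your singular-fibre analysis. Your claim that a singular point with $v\neq 0$ forces $q(s,t)^3+27st\,g(s,t)=0$ is false: the Jacobian system also admits the solution $x_1=x_2=0$, $v=1$, which lies on $C_{[s:t]}$ precisely when $g(s,t)=0$. Thus there are four further singular fibres over the zeros of $g$, which you have overlooked. These extra fibres are again nodal cubics with a single rational node (tangent cone $q(s_0,t_0)\,x_1x_2=0$ at $(0{:}0{:}1)$), hence geometrically integral, so the final statement is unaffected; but your case analysis as written does not establish it. The paper avoids this computation entirely by citing \cite[\S4]{RS15}, which records that the generic surface has exactly twelve singular fibres: six of type~$I_1$ (your two at $[1{:}0],[0{:}1]$ together with the four at $g=0$) and six of type~$I_3$ over $P$.

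One small slip: the three lines in $C_P$ are not concurrent but form a triangle (the coefficient matrix of the three linear forms is a Vandermonde in $1,\omega,\omega^2$ times $\mathrm{diag}(1,\gamma,\gamma^2)$, hence nonsingular), consistent with the Kodaira type~$I_3$ reported in \cite{RS15}. This does not affect your argument.
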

\begin{proof}
	As explained at the end of \cite[\S4]{RS15},
	there are exactly twelve singular fibres over the algebraic closure of $\kappa(\eta)$.
	Six have type $I_1$, hence are
	irreducible. The other six have type $I_3$ and lie above $P$. This
	proves the second statement.
	To prove the first statement, let 
	$P_0: x_4 = \alpha x_3$ be a point of $P \otimes_{\kappa(\eta)} L$, where $L=\kappa(P)$, and write
	$q(x_3,\alpha x_3) = \beta x_3^2$ for some $\beta \in L$.
	The fibre over $P_0$ takes the form
	$$\alpha x_1^3 + \alpha^2 x_2^3 + \alpha\beta x_1x_2x_3 - \beta^3 x_3^3/27 = 0.$$
	This is the norm form of $L(\alpha^{1/3})/L$ with respect to the
	basis $(\alpha^{1/3},\alpha^{2/3}, -\beta/3)$. In particular, one easily sees
	that this scheme is irreducible and contains a line over $L(\alpha^{1/3})$.
	This proves the result.	
\end{proof}

We now come to the application of Theorem \ref{thm:non-split}.

\begin{theorem}
	Let $k$ be a number field and let $\mathcal{Z}$ be as above. There exists a thin subset $T \subset U(k)$ with the following
	property. Let $u \in U(k) \setminus T$ and let $\pi_u:\mathcal{Z}_u \to \PP_k^1$ be the induced
	genus $1$ fibration. Then
	$$N_{\mathrm{loc}}(\pi_u,B) \ll \frac{B^2}{(\log B)^{\Delta(\pi_u)}}, 
	\quad \mbox{where} \quad\Delta(\pi_u) = 
	\begin{cases}
		1/3, & \mu_3 \not \subset k, \\
		2/3, & \mu_3 \subset k.	
	\end{cases}
	$$
\end{theorem}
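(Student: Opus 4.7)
The plan is to apply Theorem \ref{thm:non-split} to a smooth proper model of $\pi_u$, after computing the relevant $\delta$-invariants on the generic fibration via Lemma \ref{lem:quartic} and transferring the computation to specialisations via Hilbert's irreducibility theorem.

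First I would produce a good compactification of $\pi_u$ in the sense of Definition \ref{def:goodmodels}: blowing up the line $x_3 = x_4 = 0 \subset \mathcal{Z}_u$ yields a smooth projective surface $\widetilde{\mathcal{Z}}_u$ equipped with a proper genus-one fibration $\widetilde{\pi}_u : \widetilde{\mathcal{Z}}_u \to \PP_k^1$, and this construction works in families, giving $\widetilde{\pi} : \widetilde{\mathcal{Z}} \to \PP_U^1$. By Lemma \ref{lem:independence}, $N_{\mathrm{loc}}(\pi_u, B)$ and $N_{\mathrm{loc}}(\widetilde{\pi}_u, B)$ agree up to an error of size $O(B^{3/2 + \varepsilon})$, which is negligible compared to the target bound.

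Next I would compute the generic contribution to $\Delta$. By Lemma \ref{lem:quartic} the only non-split codimension-one fibre of $\widetilde{\pi}_\eta$ lies over the closed point $P$ defined by $q^3 + 27 x_3 x_4 g = 0$; this fibre is irreducible and split by a pure cubic extension $\kappa(P)(\alpha^{1/3})/\kappa(P)$, geometrically a union of three lines permuted by the Galois group of the Galois closure. If $\mu_3 \subset k$ then the cubic extension is cyclic Galois of degree three, and Lemma \ref{lem:Galois} gives $\delta_P(\widetilde{\pi}_\eta) = 1/3$. If $\mu_3 \not\subset k$ the Galois closure has group $S_3$ acting faithfully on three elements, so $\delta_P(\widetilde{\pi}_\eta) = 4/6 = 2/3$ since exactly the identity and the three transpositions fix a component. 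The remaining singular fibres (of Kodaira type $I_1$) are geometrically irreducible, hence split, and contribute nothing to $\Delta$.

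The final step is to transfer this to specialisations via Hilbert's irreducibility theorem. For $u$ outside a suitable thin subset $T \subset U(k)$, the polynomial $q(u)^3 + 27 x_3 x_4 g(u)$ should remain irreducible of degree six (so $P$ specialises to a single closed point $P_u \in \PP_k^1$ with $[\kappa(P_u) : k] = 6$), and the Galois group of the Galois closure of the cubic splitting extension of the fibre above $P_u$ should equal the generic one. Under these conditions $\delta_{P_u}(\widetilde{\pi}_u) = \delta_P(\widetilde{\pi}_\eta)$, and Theorem \ref{thm:non-split} together with Proposition \ref{prop:Delta} then yields the stated bound. The main obstacle is this last step: one must formulate a version of Hilbert's theorem that tracks the Galois group of the degree-three \'etale cover classifying the components of the non-split fibres as it varies over the non-split divisor $C \to U$, and simultaneously avoid specialisations where $C$ fragments beyond its generic Galois structure or the cubic cover degenerates. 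Both conditions define thin subsets of $U(k)$, but the combinatorics require careful treatment.
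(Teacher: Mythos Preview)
Your proposal is correct and follows essentially the same route as the paper: identify the unique non-split codimension-one fibre of the generic fibration via Lemma~\ref{lem:quartic}, compute its $\delta$-invariant (the paper defers this to ``a calculation similar to the one in \S\ref{sec:Fermat}'' whereas you spell out the $S_3$ versus $\ZZ/3\ZZ$ dichotomy explicitly), and then specialise via Hilbert's irreducibility theorem before applying Theorem~\ref{thm:non-split}. The paper is slightly terser about the compactification step and handles your ``main obstacle'' by a direct appeal to \cite[Prop.~3.3.1]{Ser08}, which simultaneously preserves the irreducibility of the degree-six locus and the Galois group of the cubic splitting cover; so the combinatorics you flag as delicate are in fact covered by that single reference.
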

\begin{proof}
	Consider the map $\mathcal{Z} \to U \times \PP_k^1$ induced by the natural morphism $\mathcal{Z} \to U$ and the universal
	genus $1$ fibration $\mathcal{Z} \to \PP_k^1$. Let $P \in \PP_{\kappa(\eta)}^1$
	be as in Lemma~\ref{lem:quartic} and let $F$ be the closure of the pull-back of the point $\eta \times P$ to $\mathcal{Z}$.
	This is a closed subscheme of $\mathcal{Z}$ such that $F \cap \mathcal{Z}_\eta$ is exactly the non-split fibre from Lemma \ref{lem:quartic}.
	We see that there exists a dense open subset $V \subset U$ such that $F \cap \mathcal{Z}_u$ is a singular fibre
	of $\pi_u$ for all $u \in V$. An application of Hilbert's irreducibility theorem \cite[Prop.~3.3.1]{Ser08}
	and Lemma \ref{lem:quartic} moreover imply that, outside of some thin subset, the scheme
	$F \cap \mathcal{Z}_u$
	is a non-split fibre of $\pi_u$  which is irreducible and split by a pure cubic extension.
	The result then follows from Theorem \ref{thm:non-split},
	together with a calculation similar to the one in \S \ref{sec:Fermat}.
\end{proof}

\end{document}